\newcommand{\innp}[1]{\left\langle #1 \right\rangle}
\newcommand{\vx}{\mathbf{x}}
\newcommand{\tG}{\Tilde{G}}
\newcommand{\cx}{\mathcal{X}}
\newcommand{\cb}{\mathcal{B}}
\newcommand{\cf}{\mathcal{F}}
\newcommand{\cc}{\mathcal{C}}
\newcommand{\cs}{\mathcal{S}}
\newcommand{\vyh}{\mathbf{\hat{y}}}
\newcommand{\vyb}{\mathbf{\bar{y}}}
\newcommand{\vy}{\mathbf{y}}
\newcommand{\vz}{\mathbf{z}}
\newcommand{\vb}{\mathbf{b}}
\newcommand{\vc}{\mathbf{c}}
\newcommand{\vu}{\mathbf{u}}
\newcommand{\vd}{\mathbf{d}}
\newcommand{\vs}{\mathbf{s}}
\newcommand{\vlambda}{\bm{\lambda}}
\newcommand{\defeq}{\stackrel{\mathrm{\scriptscriptstyle def}}{=}}
\newcommand{\rr}{\mathbb{R}}
\newcommand{\relint}{\mathrm{rel.int}}
\newcommand{\aff}{\mathrm{aff}}
\newcommand{\AFW}{\mathrm{AFW}}
\newcommand{\ACC}{\mathrm{ACC}}
\newcommand{\out}{\mathrm{out}}
\newcommand{\card}[1]{\left| #1 \right|}
\newcommand*{\vertbar}{\rule[-1ex]{0.5pt}{2.5ex}}
\newcommand*{\horzbar}{\rule[.5ex]{2.5ex}{0.5pt}}
\newcommand{\norm}[1]{\left\lVert#1\right\rVert} % Norm symbol
\newcommand{\abs}[1]{\left\lvert#1\right\rvert}
\newcommand{\inner}[2]{\left\langle#1,#2\right\rangle}
\newcommand{\alglinelabel}{%
  \label% Regular \label
}
\def\mathcolor#1#{\@mathcolor{#1}}
\def\@mathcolor#1#2#3{%
  \protect\leavevmode
  \begingroup
    \color#1{#2}#3%
  \endgroup
}
\newcommand*{\vsepfbox}[1]{%
  \begingroup
    \sbox0{\fbox{#1}}%
    \setlength{\fboxrule}{0pt}%
    \mbox{\kern-\fboxsep\fbox{\unhbox0}\kern-\fboxsep}%
  \endgroup
}
\theoremstyle{plain} \numberwithin{equation}{section}
\newtheorem{theorem}{Theorem}[section]
\numberwithin{theorem}{section}
\newtheorem{corollary}[theorem]{Corollary}
\newtheorem{proposition}[theorem]{Proposition}
\theoremstyle{definition}
\newtheorem{definition}[theorem]{Definition}
\newtheorem{remark}[theorem]{Remark}
\newtheorem{assumption}[theorem]{Asssumption}
\newtheorem{fact}[theorem]{Fact}
\newtheorem{oracle}[theorem]{Oracle}
\DeclareMathOperator*{\argmin}{argmin}
\DeclareMathOperator*{\argmax}{argmax}
\DeclareMathOperator{\vertex}{\mathrm{vert}}
\DeclareMathOperator{\co}{\mathrm{co}}
\DeclareMathOperator*{\trace}{trace}
\newcommand{\subalign}[1]{%
  \vcenter{%
    \Let@ \restore@math@cr \default@tag
    \baselineskip\fontdimen10 \scriptfont\tw@
    \advance\baselineskip\fontdimen12 \scriptfont\tw@
    \lineskip\thr@@\fontdimen8 \scriptfont\thr@@
    \lineskiplimit\lineskip
    \ialign{\hfil$\m@th\scriptstyle##$&$\m@th\scriptstyle{}##$\hfil\crcr
      #1\crcr
    }%
  }%
}
\title{Parameter-free Locally Accelerated Conditional Gradients\thanks{Authors are ordered alphabetically.}}
\author{
Alejandro Carderera \thanks{Department of Industrial and Systems Engineering, Georgia Institute of Technology. E-mail: \href{mailto:alejandro.carderera@gatech.edu}{\texttt{alejandro.carderera@gatech.edu}}} \and Jelena Diakonikolas \thanks{Department of Computer Sciences, University of Wisconsin-Madison. E-mail:  \href{mailto:jelena@cs.wisc.edu}{\texttt{jelena@cs.wisc.edu}}, \href{mailto:cylin@cs.wisc.edu}{\texttt{cylin@cs.wisc.edu}}} \and Cheuk Yin Lin \footnotemark[3] \and Sebastian Pokutta \thanks{Institute of Mathematics, Zuse Institute Berlin and Technische Universität Berlin. E-mail: \href{mailto:pokutta@zib.de}{\texttt{pokutta@zib.de}}}
}
\date{}
\begin{document}

\maketitle

\begin{abstract}
    Projection-free conditional gradient (CG) methods are the algorithms of choice for constrained optimization setups in which projections are often computationally prohibitive but linear optimization over the constraint set remains computationally feasible. 
    Unlike in projection-based methods, globally accelerated convergence rates are in general unattainable for CG. However, a very recent work on \emph{Locally accelerated CG} (LaCG) has demonstrated that local acceleration for CG is possible for many settings of interest. The main downside of LaCG is that it requires knowledge of the smoothness and strong convexity parameters of the objective function. We remove this limitation by introducing a novel, Parameter-Free Locally accelerated CG (PF-LaCG) algorithm, for which we provide rigorous convergence guarantees. Our theoretical results are complemented by numerical experiments, which demonstrate local acceleration and showcase the practical improvements of PF-LaCG over non-accelerated algorithms, both in terms of iteration count and wall-clock time. 
\end{abstract}

\section{Introduction}

Conditional gradient (CG) (or Frank-Wolfe (FW))
methods~\citep{frank1956algorithm, levitin1966constrained} are a fundamental
class of projection-free optimization methods, most frequently used to minimize
smooth convex objective functions over constrained sets onto which projections
are computationally prohibitive; see \citet{combettes2021complexity} for an
overview. These methods have received significant recent attention in the
machine learning and optimization communities, due to the fact that they eschew
projections and produce solutions with sparse
representations~\citep{jaggi2013revisiting,garber2016faster,hazan2016variance,pok17lazy,pok18bcg,lei2019primal,tsiligkaridis2020frank,combettes20adasfw}. 

While CG methods have been applied to many different problem settings (see,
e.g.,~\citet{hazan2016variance,zhou2018limited,pedregosa2018step,lei2019primal,tsiligkaridis2020frank,dvurechensky2020self,zhang2020one,negiar2020stochastic,
carderera2020second,UniformConvexFW_2020}), in this paper, we are interested in
using CG-type methods to solve problems of the form:
\begin{equation}\label{eq:problem}\tag{P}
\min_{\vx \in \cx} f(\vx),
\end{equation} 
where $f$ is an $L$-smooth (gradient-Lipschitz) and $m$-strongly convex function and $\cx \subseteq \rr^n$ is a polytope. %We

We assume that we are given access to the objective function $f$ and to the feasible set $\cx$ via the following two oracles:
\begin{oracle}[First Order Oracle (FOO)]\label{assmpt:oracle-access-f}
Given $\vx \in \cx$, the FOO returns $f(\vx)$ and $\nabla f(\vx).$ 
\end{oracle}
\begin{oracle}[Linear Minimization Oracle (LMO)]\label{assmpt:oracle-access-X}
Given $\vc \in \rr^n$, the LMO returns $\argmin\limits_{\vx \in \cx}\innp{\vc, \vx}$. 
\end{oracle}

While being of extreme practical importance, these LMO-based methods in general
do not achieve the \emph{globally} optimal rates for smooth (and possibly strongly convex) minimization that are attained by projection-based methods. In particular, LMO-based algorithms cannot converge globally faster than $O(1/k)$ for the class of smooth strongly convex functions where $k$ is the number of iterations (up to the dimension threshold $n$)~\citep{lan2013complexity,jaggi2013revisiting}. Moreover, the dependence of the convergence rate on the dimension is unavoidable in general. 

At the same time, it was shown by \citet{diakonikolas2019lacg} that optimal
rates can be obtained asymptotically, referred to as \emph{locally optimal
rates}. That is, after a finite number of iterations (independent of the target
accuracy $\epsilon$) with potentially sub-optimal convergence, optimal
convergence rates can be achieved.  While \citet{diakonikolas2019lacg} resolve
the question of acceleration for CG-type methods, it unfortunately depends on the 
knowledge of $m$ and $L$. Although the latter can be removed with the common line
search-based arguments, the knowledge of a good estimate of the former is
crucial in achieving acceleration. This makes the \emph{Locally-accelerated
Conditional Gradient (LaCG)} algorithm from \citet{diakonikolas2019lacg},
despite being of theoretical interest, of limited use in practice. Not only is
it usually hard to come by a good estimate of the parameter $m$, but working
with an estimated lower bound does not take advantage of the potentially better
local strong convexity behavior in the vicinity of an iterate. 

To remedy these
shortcomings, we devise a new \emph{Parameter-Free Locally-accelerated
Conditional Gradient algorithm (PF-LaCG)} that is based on a similar coupling
between a variant of the \emph{Away-step Frank-Wolfe (AFW)} method \citep{guelat1986some,lacoste2015global} and an accelerated
method as used in LaCG.  However, beyond this basic inspiration, not many things can be reused
from \citet{diakonikolas2019lacg}, as in order to achieve parameter-freeness, we
need to devise a completely new algorithm employing a gradient-mapping
technique that out-of-the-box is incompatible with the approach used in LaCG. 

\subsection{Contributions and Further Related Work}
Our main contributions can be summarized as follows (see Section~\ref{sec:main-tech-ideas} for a detailed overview of the main technical ideas).

\paragraph{Near-optimal and parameter-free acceleration with inexact projections.} To devise PF-LaCG, we introduce a parameter-free accelerated algorithm for smooth strongly convex optimization that utilizes \emph{inexact} projections onto low-dimensional simplices. While near-optimal (i.e., optimal up to poly-log factors) parameter-free projection-based algorithms were known in the literature prior to our work~\cite{nesterov2013methods,ito2019gradientmapping}, their reliance on exact projections which are computationally infeasible makes them unsuitable for our setting.

\paragraph{Parameter-free Locally-accelerated Conditional Gradient (PF-LaCG) algorithm.} We propose a novel, parameter-free and locally accelerated CG-type method. Up to poly-logarithmic factors, our algorithm PF-LaCG attains an optimal accelerated local rate of convergence. PF-LaCG leverages efficiently computable projections onto low dimensional simplices, but is otherwise projection-free (i.e., it does not assume access to a projection oracle for $\cx$). Local acceleration is achieved by coupling the parameter-free accelerated method with inexact projections described in the previous paragraph and AFW with a fractional exit condition \cite{kerdreux2019restartfw}. This coupling idea is inspired by the coupling between $\mu$AGD+~\citep{cohen2018acceleration} (where AGD stands for Accelerated Gradient Descent) and AFW~\citep{guelat1986some} used in~\citet{diakonikolas2019lacg}; however, most of the similarities between our work and \citet{diakonikolas2019lacg} stop there, as there are major technical challenges that have to be overcome to attain the results in the parameter-free setting.

\paragraph{Computational experiments.} We demonstrate the efficacy of PF-LaCG using numerical experiments, comparing the performance of the proposed algorithms to several relevant CG-type algorithms. The use of PF-LaCG brings demonstrably faster local convergence in primal gap with respect to both iteration count and wall-clock time.

\subsection{Outline}
In Section~\ref{sec:basics} we introduce the notation and preliminaries that are required for stating our main results. 
We then present our approach to parameter-free local acceleration in Section~\ref{sec:param-free} and derive our main results. Finally, we demonstrate the practicality of our approach with computational experiments in Section~\ref{sec:compResults}, and conclude with a discussion in Section~\ref{sec:Conclusion}.

\section{Notation and Preliminaries}
\label{sec:basics}

 We denote the unique minimizer of Problem~\eqref{eq:problem} by $\vx^*$. Let $\norm{\cdot}$ and $\innp{\cdot, \cdot}$ denote the Euclidean norm and the standard inner product, respectively. We denote the \emph{diameter} of the polytope $\cx$ by $D = \max_{\vx,\vy \in \cx} \norm{\vx - \vy}$, and its \emph{vertices} by $\vertex\left(\mathcal{X}\right)\subseteq \cx$. Given a non-empty set $\mathcal{S}\subset \rr^n$,  we denote its \emph{convex hull} by $\co\left( \mathcal{S} \right)$. For any $\vx\in \cx$ we denote by $\mathcal{F}\left( \vx\right)$ the \emph{minimal face} of $\cx$ that contains $\vx$. We call a subset of vertices $\cs \subseteq \vertex(\cx)$ a \emph{support} of $\vx \in \cx$ if $\vx$ can be expressed as a convex combination of the elements of $\cs$. A support $\cs$ of $\vx$ is a \emph{proper support} of $\vx$ if the weights associated with the convex decomposition are positive. Let $\cb(\vx,r)$ denote the ball around $\vx$ with radius $r$ with respect to $\norm{\cdot}$. We say that $\vx$ is \emph{$r$-deep} in a convex set $\cc \subseteq \rr^n$ if $\cb(\vx,r) \cap \aff(\cc) \subseteq \cc$, where $\aff(\cc)$ denotes the smallest affine space that contains $\cc$. The point $\vx$ is contained in the \emph{relative interior of $\cc$}, $\vx \in \relint(\cc)$, if there exists an $r>0$ such that $\vx$ is $r$-deep in $\cc$. 
 
 \paragraph{Measures of optimality.} The two key measures of optimality that we will use in this paper are the \emph{Strong Wolfe Gap} and the \emph{Gradient Mapping}. We define the former as:
\begin{definition}[Strong Wolfe Gap]
  \label{defn:strong-wolfe-gap}
  Given $\vx \in \cx$, the \emph{strong Wolfe gap} $w (\vx)$ of $f$ over $\cx$ is defined as
  \begin{equation*}
    w (\vx) := \min_{\cs \in \cs_{\vx}} w (\vx, \cs),
  \end{equation*}
  where $\cs_{\vx}$ denotes the set of all proper supports of $\vx$ and  $w (\vx, \cs) := \max_{\vy \in \cs, \vz \in \cx} \inner{\nabla f (\vx)}{\vy - \vz}$.
\end{definition}
 Note that any polytope $\cx$ satisfies what is known as a $\delta (\cx)$-scaling inequality, where $\delta (\cx)$ is the pyramidal width of the polytope (see, e.g., \cite{lacoste2015global, beck2017linearly, pena2019polytope, gutman2018condition}) and where the inequality is defined as:
 \begin{definition}[$\delta$-scaling inequality]\label{defn:delta-scaling}
There exists $\delta > 0$ such that for all $\vx \in \cx \setminus \vx^*$ we have that Problem~\eqref{eq:problem} satisfies $w (\vx) \ge \delta (\cx) \inner{\nabla f (\vx)}{\vx - \vx^*}/\norm{\vx - \vx^*}$.
\end{definition}

To implement a parameter-free variant of a projection-based accelerated algorithm, we will rely on a second measure of optimality, the \emph{gradient mapping}. Recall that we do \emph{not} assume the availability of projections onto the polytope $\cx$; instead, our algorithm will only rely on low-complexity projections onto simplices spanned by a small number of vertices of $\cx$ (see~\citet{diakonikolas2019lacg} for a more detailed discussion). In the following, given a convex set $\cc,$ we denote the projection of $\vx \in \rr^n$ onto $\cc$ by $P_{\cc}(\vx).$ 
\begin{definition} [Gradient mapping]
  \label{defn:gradient-mapping}
  Given a convex set $\cc \subseteq \rr^n,$ a differentiable function $f:\cc \to \rr,$  and a scalar $\eta > 0,$ the gradient mapping of $f$ w.r.t.~$\eta, \cc$ is defined by:
  $$
    G_{\eta} (\vx) = \eta \Big(\vx - P_{\cc} \Big(\vx - \frac{1}{\eta} \nabla f(\vx)\Big)\Big).
  $$
\end{definition}

The gradient mapping is a generalization of the gradient to constrained sets:
when $\cc \equiv \rr^n,$ $G_{\eta}(\vx) = \nabla f(\vx).$ The norm of the
gradient mapping can also be used as a measure of optimality: the gradient mapping
at a point $\vx$ is zero if and only if $\vx$ minimizes $f$ over $\cc;$ more
generally, a small gradient mapping norm for a smooth function implies a small
optimality gap. See~\citet[Chapter~10]{beck2017optimization} and
Appendix~\ref{appx:section:acceralated-analysis} for more useful properties.

\subsection{Assumptions}

 We make two key assumptions. The first one, the \emph{strict complementarity assumption} (Assumption~\ref{assumption:strictComplementarity}) is key to proving the convergence of the iterates to $\mathcal{F}\left( \vx^*\right)$, and is a common assumption in the Frank-Wolfe literature~\cite{guelat1986some, garber2020revisiting}, which is related to the stability of the solution with respect to noise, and rules out degenerate instances.

\begin{assumption}[Strict complementarity]\label{assumption:strictComplementarity}
We have that $\innp{\nabla f\left( \vx^*\right), \vx - \vx^*} = 0$ if and only if $\vx \in \mathcal{F} \left( \vx^*\right)$. Or stated equivalently, there exists a $\tau >0$ such that $\innp{\nabla f\left( \vx^*\right), \vx - \vx^*} \geq \tau$ for all $\vx \in \mathrm{vert} \left( \cx \right)\setminus \mathcal{F} \left( \vx^*\right)$.
\end{assumption}

Lastly, to achieve local acceleration, similar to~\citet{diakonikolas2019lacg}, we require that the optimal solution $\vx^*$ is \emph{sufficiently deep} in the relative interior of a face of $\cx$. We make the following assumption about the problem, which covers all cases of interest.

\begin{assumption}[Location of $\vx^*$]\label{assmpt:x^*-sufficiently-deep-in-X}
The optimum satisfies $\vx^* \notin \vertex \left( \cx\right)$, or conversely, there exists an $r >0$ such that $\vx^*$ is $r$-deep in a face $\mathcal{F}$ of $\cx.$ 
\end{assumption}

Note that the entire polytope $\cx$ is an $n$-dimensional face of itself, and thus Assumption~\ref{assmpt:x^*-sufficiently-deep-in-X} allows $\vx^*\in \relint(\cx)$. Note that if $\vx^*\in \vertex \left( \cx\right)$ and the strict complementarity assumption is satisfied (Assumption~\ref{assumption:strictComplementarity}), then the projection-free algorithm that will be presented in later sections will reach $\vx^*$ in a finite number of iterations (see \citet{garber2020revisiting}), and so there is no need for acceleration. For computational feasibility, we assume that $r$ is bounded away from zero and much larger than the desired accuracy $\epsilon >0$.

%%%%%%%%%%%%%%%%%%%
\section{Parameter-Free Local Acceleration}
\label{sec:param-free}
This section provides our main result: a parameter-free locally-accelerated CG method (PF-LaCG). Before delving into the technical details, we first describe the core ideas driving our algorithm and its analysis. Due to space constraints, most of the proofs are omitted and are instead provided in the supplementary material.

%%%%%%%%%%%%%%%%%%%%%
\subsection{Overview of Main Technical Ideas}\label{sec:main-tech-ideas}

A standard idea for achieving acceleration in smooth and strongly convex setups where $m$ is unknown is to use a restart-based strategy, which can be described as follows: run an accelerated method for smooth (non-strongly) convex minimization and restart it every time some measure of optimality is reduced by a constant factor. The measures of optimality used in these strategies are either $f(\vx) - f(\vx^*)$~\cite{roulet2020sharpness} or $\|G_{\eta}(\vx)\|$~\cite{ito2019gradientmapping,nesterov2013methods}. 

Neither of these two optimality measures can be applied directly to our setting, as neither can be evaluated: (i) it is rarely the case that $f(\vx^*)$ is known, which is needed for evaluating $f(\vx) - f(\vx^*)$, and we make no such assumption here; and (ii) the gradient mapping norm $\|G_{\eta}(\vx)\|$ is a valid optimality measure only for the entire feasible set and requires computing projections onto it; and we do not assume availability of a projection operator onto $\cx$. 

Even though our algorithm utilizes a restarting-based strategy, the restarts are not used for local acceleration, but for the coupling of a CG method and a projection-based accelerated method. This idea is similar to~\citet{diakonikolas2019lacg}; however, there are important technical differences. First, the restarts in~\citet{diakonikolas2019lacg} are scheduled and parameter-based; as such, they cannot be utilized in our parameter-free setting. Our idea is to instead use $w(\vx, \cs)$ as a measure of optimality over the polytope, which is observable naturally in many CG algorithms (see Definition~\ref{defn:strong-wolfe-gap}), where $\cs$, dubbed the \emph{active set} of the CG algorithm, is a proper support of $\vx$. We perform restarts each time $w(\vx, \cs)$ is halved. The idea of using $w(\vx, \cs)$ as a measure of optimality comes from~\citet{kerdreux2019restartfw}; however in the aforementioned paper $w(\vx, \cs)$ was not used to obtain a locally accelerated algorithm.

The aforementioned idea of coupling an active-set-based CG method and an accelerated projection-based method can be summarized as follows. Because the objective function is strongly convex, any convergent algorithm (under any global optimality measure) will be reducing the distance between its iterates and the optimal solution $\vx^*$. The role of the CG method is to ensure such convergence without requiring projections onto the feasible set $\cx$. When $\vx^*$ is contained in the interior of a face of $\cx$, it can be argued that after a finite burn-in phase whose length is independent of the target accuracy $\epsilon$, every active set of the utilized CG method will contain $\vx^*$ in its convex hull. As it is possible to keep the active sets reasonably small (and their size can never be larger than the current iteration count), projections onto the convex hull of an active set can be performed efficiently, using low-complexity projections onto a probability simplex. Thus, after the burn-in phase, we could switch to a projection-based accelerated algorithm that uses the convex hull of the active set as its feasible set and attains an accelerated convergence rate from then onwards.

There are, of course, several technical challenges related to implementing such a coupling strategy. To begin with, there is no computationally feasible approach we are aware of that could allow detecting the end of the burn-in phase. Thus any reasonable locally accelerated algorithm needs to work without this information, i.e., we do not know when $\vx^*$ is contained in the convex hull of the active set. In~\citet{diakonikolas2019lacg}, this is achieved by using a parameter-based accelerated algorithm that monotonically decreases the objective value, running this accelerated algorithm and a CG method (AFW or the \emph{Pairwise-step Frank-Wolfe (PFW)}) in parallel, and, on restarts, updating the iterate of the accelerated method and the active set to the ones from the coupled CG method whenever the iterate of CG provides a lower function value. Thus, after the burn-in phase, if the feasible set of the accelerated method does not contain $\vx^*$ (or its close approximation), CG eventually constructs a point with the lower function value, after which the accelerated algorithm takes over, leading to local acceleration.

We can neither rely on the scheduled restarts nor the accelerated algorithm used in~\citet{diakonikolas2019lacg}, as both are parameter-based. Instead, our monotonic progress is w.r.t.~$w(\vx, \cs)$ (i.e., upon restarts we pick a point and the active set with the lower value of $w(\vx, \cs)$) and we rely on a parameter-free accelerated method. As mentioned before, even using a parameter-free projection-based acceleration requires new results, as we need to rely on inexact projections (and inexact gradient mappings). Further, for our argument to work, it is required that after a burn-in phase the accelerated method contracts $w(\vx, \cs)$ at an accelerated rate. Although this may seem like a minor point, we note that it is not true in general, as $w(\vx, \cs)$ can be related to other notions of optimality only when the algorithm iterates are contained in $\mathcal{F}(\vx^*)$ and the primal gap is sufficiently small. This is where the strict complementarity assumption (Assumption~\ref{assumption:strictComplementarity}) comes into play, as it allows us to show that after a burn-in phase (independent of $\epsilon$), we can upper bound $w(\vx, \cs)$ using $f(\vx) - f(\vx^*)$ (Theorem~\ref{theorem:no-more-drop-steps}).

%%%%%%%%%%%%%%%%%%%%%%%%%%%%%%
\subsection{Burn-in Phase}

The variant of CG used in our work is the \emph{Away Frank-Wolfe} (AFW) method \cite{guelat1986some, lacoste2015global} with a stopping criterion based on halving the Frank-Wolfe gap \cite{kerdreux2019restartfw}, shown in Algorithm~\ref{alg:fafw:main}. For completeness, the useful technical results from~\citet{kerdreux2019restartfw} utilized in our analysis are provided in Appendix~\ref{appx:FAFW}.

\begin{algorithm}[htb!]
  \caption{Away-Step Frank-Wolfe Algorithm: $\mathrm{AFW} (\vx_0, \cs_0, \epsilon^w)$}
  \begin{algorithmic}[1]
    \State $k := 0$
    \While {$w (\vx_k, \cs_k) > w (\vx_0, \cs_0) / 2$} \label{alg:exit_criterion:main}
      \State $\mathbf{v}_k := \argmin_{\vu \in \cx} \inner{\nabla f (\vx_k)}{\vu}$, $\vd_k^{\mathrm{FW}} := \mathbf{v}_k - \vx_k$ \label{alg:FAFW:FWvertex:main}
      \State $\vs_k := \argmax_{\vu \in \cs_k} \inner{ \nabla f (\vx_k)}{\vu}$, $\vd_k^{\mathrm{Away}} := \vx_k - \vs_k$ \label{alg:FAFW:awayvertex:main}
      %\If {$- \inner{\nabla f (\vx_k)}{\vd_k^{\mathrm{FW}}} > \frac{1}{4} w (\vx_0, \cs_0)$}
      \If {$- \inner{\nabla f (\vx_k)}{\vd_k^{\mathrm{FW}}} \geq -\inner{\nabla f (\vx_k)}{\vd_k^{\mathrm{Away}}}$} \label{alg:FAFW:step_type_selection:main}
        \State $\vd_k := \vd_k^{\mathrm{FW}}$ with $\lambda_{\mathrm{max}} := 1$  \label{op:fw-step:main}
      \Else
        \State $\vd_k := \vd_k^{\mathrm{Away}}$ with $\lambda_{\mathrm{max}} := \frac{\alpha_k^{\vs_k}}{1 - \alpha_k^{\vs_k}}$  \label{op:away-step:main}
      \EndIf
      \State $\vx_{k + 1} := \vx_k + \lambda_k \vd_k$ with $\lambda_k \in [0, \lambda_{\mathrm{max}}]$ via line-search \label{alg:FAFW:stepsize}
      \State Update active set $\cs_{k + 1}$ and $\{ \alpha_{k + 1}^{\mathbf{v}} \}_{\mathbf{v} \in \cs_{k + 1}}$
      \State $k := k + 1$
    \EndWhile
    \State \Return ($\vx_k, \cs_k, w (\vx_k, \cs_k)$)
  \end{algorithmic} \label{alg:fafw:main}
\end{algorithm}

We now
briefly outline how after a finite number of iterations $T$ we can guarantee that $\vx_k \in \mathcal{F}(\vx^*)$ and $\vx^*\in \co \left( \cs_k \right)$ for $k\geq K_0$. 

In particular, using Assumption~\ref{assumption:strictComplementarity}, if the primal gap is made sufficiently small and the iterate $\vx_k$ is not contained in $\mathcal{F}\left( \vx^*\right)$, then the AFW algorithm will continuously drop those vertices in $\cs_k$ that are not in $\mathcal{F}\left( \vx^*\right)$, until the iterates reach $\mathcal{F}\left( \vx^*\right)$ (see  \citet[Theorem 5]{garber2020revisiting}, or Theorem~\ref{appx:theorem:drop-step-if-not-in-optimal-face} in Appendix~\ref{appx:section:auxiliary-results}, included for completeness).

\begin{theorem}  \label{theorem:drop-step-if-not-in-optimal-face}
  If the strict complementarity assumption is satisfied (Assumption~\ref{assumption:strictComplementarity}) and the primal gap satisfies $ f(\vx_k) -f(\vx^*) < 1/2 \min\left\{ ( \tau /(2D(\sqrt{L/m} + 1)))^2/L, \tau, LD^2 \right\} $ then the following holds for the AFW algorithm (Algorithm~\ref{alg:fafw}):
  \begin{enumerate}
      \item If $\vx_k \notin \mathcal{F}\left(\vx^* \right)$, AFW will perform an away step that drops a vertex $\vs_k \in \vertex \left( \cx \right) \setminus \mathcal{F}\left(\vx^* \right)$.
      \item If $\vx_k \in \mathcal{F}\left(\vx^* \right)$, AFW will either perform a Frank-Wolfe step with a vertex $\mathbf{v}_k \in \vertex \left( \mathcal{F}\left(\vx^* \right) \right)$ or an away-step with a vertex $\vs_k \in \vertex \left( \mathcal{F}\left(\vx^* \right) \right)$. Regardless of which step is chosen, the iterate will satisfy:
      \begin{align*}
          w (\vx_k, \cs_k) \leq LD\sqrt{2(f\left(\vx_k\right) - f\left(\vx^*\right))/m}.
      \end{align*}
  \end{enumerate}
\end{theorem}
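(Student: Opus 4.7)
The plan is to exploit the strict complementarity gap $\tau$ against a controlled perturbation of the gradient and iterate, both made small by the primal-gap hypothesis. Two standard estimates drive the whole argument: $L$-smoothness gives $\norm{\nabla f(\vx_k) - \nabla f(\vx^*)} \le L\norm{\vx_k - \vx^*}$, and $m$-strong convexity gives $\norm{\vx_k - \vx^*} \le \sqrt{2(f(\vx_k) - f(\vx^*))/m}$. The three constants inside the minimum in the threshold on $f(\vx_k) - f(\vx^*)$ are calibrated so that these perturbations are strictly dominated by $\tau$ in each of the comparisons below; I would track the inequalities symbolically rather than plugging in the exact constants.

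For Part~1, assume $\vx_k \notin \mathcal{F}(\vx^*)$, so $\cs_k$ must contain at least one off-face vertex. First, I would show that the away vertex $\vs_k$ chosen in line~\ref{alg:FAFW:awayvertex:main} is off-face: for an off-face $\vu \in \cs_k$, strict complementarity gives $\inner{\nabla f(\vx^*)}{\vu - \vx^*} \ge \tau$, and the perturbation $\inner{\nabla f(\vx_k) - \nabla f(\vx^*)}{\vu - \vx^*}$ has absolute value at most $LD\norm{\vx_k - \vx^*} < \tau/2$ by the threshold; for an on-face $\vy \in \cs_k$, the analogous decomposition starts from $\inner{\nabla f(\vx^*)}{\vy - \vx^*} = 0$, so on-face candidates yield a strictly smaller maximand. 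Second, to see that line~\ref{alg:FAFW:step_type_selection:main} selects the away direction, I would upper-bound the Frank-Wolfe progress by writing $\inner{\nabla f(\vx_k)}{\vx_k - \mathbf{v}_k} = \inner{\nabla f(\vx_k)}{\vx_k - \vx^*} + \inner{\nabla f(\vx_k)}{\vx^* - \mathbf{v}_k}$; the first term is controlled by $(f(\vx_k) - f(\vx^*)) + LD\norm{\vx_k - \vx^*}$ using convexity and smoothness, and the second by $LD\norm{\vx_k - \vx^*}$ since $\inner{\nabla f(\vx^*)}{\vx^* - \mathbf{v}_k} \le 0$ by first-order optimality, so the away progress (at least $\tau/2 - o(1)$) strictly dominates. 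Third, to conclude that the step is a drop step, I would use $L$-smoothness to show that the exact line-search step exceeds $\lambda_{\max} = \alpha_k^{\vs_k}/(1 - \alpha_k^{\vs_k})$, following the template in \citet{garber2020revisiting}.

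For Part~2, assume $\vx_k \in \mathcal{F}(\vx^*)$. Since $\mathcal{F}(\vx^*)$ is a face and $\vx_k$ is a strictly-positive convex combination of the vertices in $\cs_k$, every vertex of $\cs_k$ lies in $\vertex(\mathcal{F}(\vx^*))$; in particular the away vertex $\vs_k$ does. For the Frank-Wolfe vertex $\mathbf{v}_k$, I would argue by contradiction: if $\mathbf{v}_k \notin \mathcal{F}(\vx^*)$, strict complementarity gives $\inner{\nabla f(\vx^*)}{\mathbf{v}_k - \vx^*} \ge \tau$, while the Frank-Wolfe optimality gives $\inner{\nabla f(\vx_k)}{\mathbf{v}_k - \vx^*} \le 0$; subtracting and applying smoothness yields $\tau \le LD\norm{\vx_k - \vx^*}$, contradicting the threshold. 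For the Wolfe-gap bound, I would decompose $\inner{\nabla f(\vx_k)}{\vy - \vz} = \inner{\nabla f(\vx_k) - \nabla f(\vx^*)}{\vy - \vz} + \inner{\nabla f(\vx^*)}{\vy - \vz}$ for $\vy \in \cs_k \subseteq \mathcal{F}(\vx^*)$ and $\vz \in \cx$: the first summand is at most $LD\norm{\vx_k - \vx^*}$ by Cauchy-Schwarz and smoothness; the second is non-positive because $\inner{\nabla f(\vx^*)}{\vy - \vx^*} = 0$ by strict complementarity and $\inner{\nabla f(\vx^*)}{\vx^* - \vz} \le 0$ by first-order optimality. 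Maximizing over $\vy, \vz$ gives $w(\vx_k, \cs_k) \le LD\norm{\vx_k - \vx^*}$, and strong convexity yields the claimed inequality.

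The step I expect to be the main obstacle is the drop-step conclusion in Part~1: showing that the exact line-search lands at $\lambda_{\max}$ (rather than at an interior minimizer) requires a more delicate quantitative argument than the elementary perturbation estimates used elsewhere, and is the place where the specific numerical form of the threshold $(\tau/(2D(\sqrt{L/m} + 1)))^2/L$ matters most.
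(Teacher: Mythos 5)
Your Part~2 argument and your identification of the Frank--Wolfe and away vertices in Part~1 match the paper's proof essentially step for step. The genuine problem is in the middle of Part~1, where you must show that line~\ref{alg:FAFW:step_type_selection:main} selects the away direction. You bound the Frank--Wolfe gap by splitting through $\vx^*$, obtaining $\inner{\nabla f(\vx_k)}{\vx_k - \mathbf{v}_k} \le \bigl(f(\vx_k) - f(\vx^*)\bigr) + 2LD\norm{\vx_k - \vx^*}$. Under the stated threshold, the term $LD\norm{\vx_k - \vx^*} \le LD\sqrt{2(f(\vx_k)-f(\vx^*))/m}$ is only guaranteed to satisfy $LD\norm{\vx_k - \vx^*} < \tfrac{\tau}{2}\cdot\tfrac{\sqrt{L/m}}{\sqrt{L/m}+1}$, which tends to $\tau/2$ as $L/m$ grows; it is \emph{not} $o(\tau)$. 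So your upper bound on the Frank--Wolfe gap can be as large as roughly $3\tau/2$, while your lower bound on the away gap, $\tau - (f(\vx_k)-f(\vx^*)) - LD\norm{\vx_k-\vx^*}$, can be as small as roughly $\tau/2$. The required comparison $\tau \ge 2(f(\vx_k)-f(\vx^*)) + 3LD\norm{\vx_k-\vx^*}$ therefore does not follow from the threshold, and the claim that ``the away progress strictly dominates'' is unjustified as written.

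The paper avoids this by bounding the Frank--Wolfe gap with the standard smoothness estimate $\max_{\mathbf{v}\in\cx}\inner{\nabla f(\vx_k)}{\vx_k - \mathbf{v}} \le D\sqrt{2L(f(\vx_k)-f(\vx^*))}$ (from \citet[Theorem~2]{lacoste2015global}), which under the threshold gives $\le \tfrac{\tau}{2(\sqrt{L/m}+1)} \le \tau/4$; this is a factor $\sqrt{L/m}$ tighter than your $LD\norm{\vx_k-\vx^*}$ route, and that factor is exactly what the constant $\bigl(\tau/(2D(\sqrt{L/m}+1))\bigr)^2/L$ is calibrated against. The same geometric-mean bound is also reused inside the away-gap lower bound (to control $\inner{\vx^*-\vx_k}{\nabla f(\vx_k)} \ge -\max_{\mathbf{v}\in\cx}\inner{\vx_k-\mathbf{v}}{\nabla f(\vx_k)}$), yielding $\inner{\vs-\vx_k}{\nabla f(\vx_k)} \ge \tau - D\sqrt{2L(f(\vx_k)-f(\vx^*))}\,(\sqrt{L/m}+1) \ge \tau/2$, so both sides close cleanly. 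Separately, your drop-step conclusion is only a citation: you correctly name the structure the paper uses (if $\lambda_k < \lambda_{\max}$ then exact line search forces $\inner{\vs_k - \vx_k}{\nabla f(\vx_{k+1})} = 0$, which is contradicted by a strict-complementarity-plus-perturbation estimate showing this quantity is strictly positive), but you do not supply the estimate; note that it needs smoothness applied at $\vx_{k+1}$ together with $f(\vx_{k+1}) \le f(\vx_k)$, not just the bounds at $\vx_k$.
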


Assuming that $\vx_0 \in \vertex \left( \cx\right)$ in the AFW algorithm, and using the primal gap convergence gap guarantee in \citet[Theorem 1]{lacoste2015global}, we can bound the number of iterations until $ f(\vx_k) -f(\vx^*)$ satisfies the requirement in Theorem~\ref{theorem:drop-step-if-not-in-optimal-face}. Using this bound, and the fact that the AFW algorithm can pick up at most one vertex per iteration, we can bound the number of iterations until $\vx_k \in \mathcal{F}(\vx^*)$. Note that by the second claim in Theorem~\ref{theorem:drop-step-if-not-in-optimal-face}, this means that when $\vx_k \in \mathcal{F}(\vx^*)$, then the iterates will not leave $\mathcal{F}(\vx^*)$. Furthermore, once the iterates are inside the optimal face, there are two options: if $\vx^* = \mathcal{F}(\vx^*)$, then the AFW algorithm will exit once $\vx_k \in \mathcal{F}(\vx^*)$, as $w(\vx_k, \cs_k) = 0$, otherwise if $\vx^* \notin \vertex \left( \cx\right)$ (the case of interest in our setting, by Assumption~\ref{assmpt:x^*-sufficiently-deep-in-X}), then we need to prove that after a given number of iterations the active set will satisfy $\vx^* \in \co \left( \cs_k\right)$. We prove the former using Fact~\ref{fact:asConvergence} (a variation of \citet[Fact B.3]{diakonikolas2019lacg}).

\begin{fact}[Critical strong Wolfe gap]
  \label{fact:asConvergence}
  There exists a $w_c > 0$ such that for any subset $\cs \subseteq \vertex(\mathcal{F}(\vx^*))$ and point $\vx \in \mathcal{F}(\vx^*)$ with $\vx \in \co(\cs)$ and $ w(\vx, \cs) \leq w_c$ it follows that $\vx^* \in \co(\cs)$.
\end{fact}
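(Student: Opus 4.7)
The plan is to prove the contrapositive: whenever $\vx^* \notin \co(\cs)$, the strong Wolfe gap $w(\vx, \cs)$ is bounded away from zero by a constant depending only on $m$ and on the geometry of $\mathcal{F}(\vx^*)$. This reduces the claim to two routine estimates: a pointwise lower bound on $w(\vx, \cs)$ in terms of $\norm{\vx - \vx^*}^2$, and a uniform positive lower bound on $\norm{\vx - \vx^*}$ that comes from compactness combined with the finiteness of the collection of subsets $\cs \subseteq \vertex(\mathcal{F}(\vx^*))$.

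For the pointwise bound, I would start from the decomposition $w(\vx, \cs) = \max_{\vy \in \cs}\inner{\nabla f(\vx)}{\vy} - \min_{\vz \in \cx}\inner{\nabla f(\vx)}{\vz}$. Since $\vx \in \co(\cs)$ is a convex combination of elements of $\cs$, the first maximum is at least $\inner{\nabla f(\vx)}{\vx}$, and since $\vx^* \in \cx$, the second minimum is at most $\inner{\nabla f(\vx)}{\vx^*}$. Hence
\[
  w(\vx, \cs) \;\geq\; \inner{\nabla f(\vx)}{\vx - \vx^*} \;\geq\; \frac{m}{2}\norm{\vx - \vx^*}^2,
\]
where the second inequality is the standard consequence of $m$-strong convexity combined with $f(\vx) \geq f(\vx^*)$.

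For the uniform bound, note that if $\vx^* \notin \co(\cs)$ then the distance from $\vx^*$ to the compact set $\co(\cs)$ is strictly positive; call it $d(\cs)$. Because $\vertex(\mathcal{F}(\vx^*))$ is finite, only finitely many subsets $\cs$ need to be considered, so $d_{\min} := \min\{d(\cs) : \cs \subseteq \vertex(\mathcal{F}(\vx^*)),\; \vx^* \notin \co(\cs)\} > 0$. Taking $w_c := \tfrac{m}{4} d_{\min}^2$ then suffices: if $w(\vx, \cs) \leq w_c$, the pointwise bound forces $\norm{\vx - \vx^*}^2 < d_{\min}^2 \leq d(\cs)^2$, which is incompatible with $\vx^* \notin \co(\cs)$, so $\vx^* \in \co(\cs)$ must hold. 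The argument presents no real technical obstacle; the only conceptual point is the observation that $w(\vx, \cs)$ dominates $\inner{\nabla f(\vx)}{\vx - \vx^*}$ whenever $\vx \in \co(\cs)$, which turns an apparently combinatorial claim about which vertices lie in $\cs$ into a direct consequence of strong convexity and compactness.
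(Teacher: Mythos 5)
Your proof is correct and follows essentially the same route as the paper: the paper (in Remark~\ref{appx:remark:criticalWolfe}) lower-bounds the infimum of $w(\vx,\cs)$ over the bad configurations by $\frac{m}{2}\norm{\vx-\vx^*}^2$ via the chain $w(\vx,\cs)\geq f(\vx)-f(\vx^*)\geq \frac{m}{2}\norm{\vx-\vx^*}^2$, and then invokes the positivity of the critical radius $r_c$ of \citet{diakonikolas2019lacg}, which is exactly your $d_{\min}$ obtained from compactness and the finiteness of the subsets of $\vertex(\mathcal{F}(\vx^*))$. Your version is simply self-contained where the paper outsources the finiteness argument to the earlier reference.
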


\begin{remark} 
The critical strong Wolfe gap in Fact~\ref{fact:asConvergence}, is a crucial parameter in the coming proofs. However, like the strict complementarity parameter $\tau$ \cite{guelat1986some,garber2020revisiting} and the critical radius $r_c$ defined in \citet{diakonikolas2019lacg}, the critical strong Wolfe gap can be arbitrarily small for some problems. Fortunately, as we will show in the proofs to come, it only affects the length of the burn-in phase of the accelerated algorithm, and moreover this dependence is logarithmic. In Remark~\ref{appx:remark:criticalWolfe} in the Appendix we show a simple example for which one can compute $w_c$ exactly, and we give a lower bound and an upper bound for $w_c$ for Problem~\eqref{eq:problem}.
\end{remark}

With these tools at hand, we have the bound shown in Theorem~\ref{theorem:no-more-drop-steps} (see Appendix~\ref{appx:FAFW} for the proof).

\begin{theorem} \label{theorem:no-more-drop-steps}
  Assume that the AFW algorithm (Algorithm~\ref{alg:fafw}) is run starting with $\vx_0 \in \vertex (\cx)$. If the strict complementarity assumption (Assumption~\ref{assumption:strictComplementarity}) is satisfied and $\vx^* \notin \vertex \left( \cx\right)$, then for $k \geq K_0$ with 
  $$
  K_0 = \frac{32L}{m \ln 2} \left(\frac{D}{\delta(\cx)}\right)^2 \log \left( \frac{2 w(\vx_0, \cs_0)}{\min\{ ( \tau /(2D(\sqrt{L/m} + 1)))^2/L, \tau, LD^2, 2 w_c \}}\right),
  $$
  where $\delta\left( \cx\right)$ is the pyramidal width from Definition~\ref{defn:delta-scaling} and $w_c>0$ is the critical strong Wolfe gap from Fact~\ref{fact:asConvergence},
we have that $\vx_k \in \mathcal{F}(\vx^*)$, $\vx^* \in \co \left( \cs_k \right)$. Moreover:
  \begin{align*}
   %w (\vx_k, \cs_k) \leq \frac{LD\sqrt{2}}{\sqrt{m}}\sqrt{f\left(\vx_k\right) - f\left(\vx^*\right)}.
   w (\vx_k, \cs_k) \leq LD\sqrt{2(f\left(\vx_k\right) - f\left(\vx^*\right))/m}.
  \end{align*}
\end{theorem}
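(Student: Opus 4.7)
My plan combines three ingredients: (i) the halving-based linear convergence of AFW in the strong Wolfe gap due to \citet{kerdreux2019restartfw}, (ii) the drop-step dichotomy of Theorem~\ref{theorem:drop-step-if-not-in-optimal-face}, and (iii) Fact~\ref{fact:asConvergence} to certify $\vx^* \in \co(\cs_k)$ once the strong Wolfe gap falls below $w_c$ and the active set is contained in $\vertex(\mathcal{F}(\vx^*))$.

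First, I would invoke the halving result from \citet{kerdreux2019restartfw} recalled in Appendix~\ref{appx:FAFW}: each epoch of $O\!\left(\frac{L}{m}(D/\delta(\cx))^2\right)$ AFW iterations contracts $w(\vx,\cs)$ by a factor of two. Chaining $\lceil \log_2(2 w(\vx_0,\cs_0)/\epsilon_{\mathrm{thresh}}) \rceil$ such epochs, where $\epsilon_{\mathrm{thresh}}$ denotes the minimum appearing inside the logarithm in the statement, uses the claimed $K_0$ iterations and drives $w(\vx_{K_0},\cs_{K_0}) \le \epsilon_{\mathrm{thresh}}/2$ (the factor $1/\ln 2$ absorbs the change from base-$2$ to natural log, and the factor of $2$ inside the log leaves slack for a finite transient below). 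Since $f(\vx)-f(\vx^*) \le w(\vx,\cs)$ for any proper support, the primal gap also falls below $\epsilon_{\mathrm{thresh}}$, which is by construction at most the threshold hypothesized in Theorem~\ref{theorem:drop-step-if-not-in-optimal-face}; monotonicity of the primal gap under AFW's line search then preserves this hypothesis for every $k \ge K_0$.

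Next, I would apply Theorem~\ref{theorem:drop-step-if-not-in-optimal-face}: under the preceding primal-gap bound, every iteration with $\vx_k \notin \mathcal{F}(\vx^*)$ must execute an away-drop step removing a vertex from $\vertex(\cx)\setminus\vertex(\mathcal{F}(\vx^*))$, whereas every iteration with $\vx_k \in \mathcal{F}(\vx^*)$ only uses vertices of $\vertex(\mathcal{F}(\vx^*))$. Consequently the count of non-optimal-face vertices in the active set is non-increasing and strictly decreases whenever $\vx_k \notin \mathcal{F}(\vx^*)$, so within finitely many further iterations $\vx_k \in \mathcal{F}(\vx^*)$ permanently and $\cs_k \subseteq \vertex(\mathcal{F}(\vx^*))$. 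Part~(2) of Theorem~\ref{theorem:drop-step-if-not-in-optimal-face} then yields the displayed inequality $w(\vx_k,\cs_k) \le LD\sqrt{2(f(\vx_k)-f(\vx^*))/m}$, and since $\epsilon_{\mathrm{thresh}} \le 2 w_c$ we have $w(\vx_k,\cs_k) \le w_c$; invoking Fact~\ref{fact:asConvergence} delivers $\vx^* \in \co(\cs_k)$. The invariant $\vx^* \in \co(\cs_k)$ is preserved for all larger $k$ by re-applying Fact~\ref{fact:asConvergence} iteration by iteration, using monotone decay of the primal gap to keep $w(\vx_k,\cs_k) \le w_c$.

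The main obstacle I anticipate is showing that the transient drop-step phase, during which $\vx_k$ migrates into $\mathcal{F}(\vx^*)$, fits inside the $K_0$ iteration budget rather than incurring an additive $|\vertex(\cx)|$ overhead. Handling this requires being precise about at which iteration within the $K_0$ window the primal gap first crosses the threshold of Theorem~\ref{theorem:drop-step-if-not-in-optimal-face}, and arguing that the remaining halving epochs -- which are themselves ordinary AFW steps -- already furnish enough opportunities for the needed drop steps. The slack built into the leading constant $32/\ln 2$ and the extra factor of $2$ inside the logarithm are designed precisely to absorb this finite overhead, so the stated $K_0$ suffices.
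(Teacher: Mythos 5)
Your overall route is the same as the paper's: drive the primal gap below the threshold of Theorem~\ref{theorem:drop-step-if-not-in-optimal-face} using the linear convergence of AFW, use that theorem's drop-step dichotomy to migrate the iterates into $\mathcal{F}(\vx^*)$, and then invoke Fact~\ref{fact:asConvergence} once $w(\vx_k,\cs_k)\le w_c$ to conclude $\vx^*\in\co(\cs_k)$. The only cosmetic difference is that the paper enters phase one via the primal-gap guarantee of \citet[Theorem~1]{lacoste2015global}, whereas you route through the strong-Wolfe-gap halving of \citet{kerdreux2019restartfw} together with $f(\vx)-f(\vx^*)\le w(\vx,\cs)$; both are valid and yield the same order of iteration count.

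However, the step you flag as ``the main obstacle'' --- bounding the length of the drop-step transient --- is left genuinely unresolved in your write-up: you assert that ``the slack built into the leading constant\dots{} and the extra factor of $2$ inside the logarithm are designed precisely to absorb this finite overhead,'' but slack in a constant cannot by itself rule out an additive $\card{\vertex(\cx)}$ term, which is exactly the failure mode you yourself identify. The missing idea is a counting argument tied to the initialization $\vx_0\in\vertex(\cx)$: since $\cs_0=\{\vx_0\}$ is a singleton and AFW (Algorithm~\ref{alg:fafw}) adds at most one vertex to the active set per iteration, after $T$ iterations the active set contains at most $T$ vertices outside $\vertex(\mathcal{F}(\vx^*))$. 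Once the primal gap has crossed the threshold of Theorem~\ref{theorem:drop-step-if-not-in-optimal-face} (which the paper shows happens within some $T = \frac{8LD^2}{m\delta(\cx)^2}\log(\cdot)$ iterations), every subsequent iteration with $\vx_k\notin\mathcal{F}(\vx^*)$ drops one such vertex, so at most $T$ further iterations are needed before $\vx_k\in\mathcal{F}(\vx^*)$ permanently --- i.e., doubling the phase-one bound suffices, and this doubled quantity sits comfortably inside $K_0$. With that one sentence supplied, your argument closes and matches the paper's proof; without it, the claim that the transient ``fits inside the $K_0$ iteration budget'' is an assertion rather than a proof.
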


%%%%%%%%%%%%%%%%%%%%%%%%%%%%%%
\subsection{Parameter-free Projection-based Acceleration}

The main idea for obtaining parameter-free projection-based acceleration is to use the regularization trick of Nesterov~\cite{nesterov2012make} to obtain a near-optimal method for minimizing $\|G_{\eta}(\vx)\|$ for a smooth convex function. Then a near-optimal method for smooth strongly convex minimization is obtained by restarting this method every time $\|G_{\eta}(\vx)\|$ is halved. 

The restarting approach is important here, as it removes the requirement of knowing the parameter $m$. 
However, there are a few technical challenges in implementing the near-optimal method for minimizing $\|G_{\eta}(\vx)\|$ without the knowledge of the parameter $L$ or the distance to $\vx^*$, which is needed for setting the value of the regularization parameter. Some of these challenges were addressed in~\citet{ito2019gradientmapping,nesterov2013methods}. However, as discussed before, the approaches from~\citet{ito2019gradientmapping} and \citet{nesterov2013methods} are insufficient for our purposes, as they assume access to exact projections onto the feasible set (and thus, exact evaluations of the gradient mapping). 

In the following, we first present a near-optimal method for minimizing $\|G_{\eta}(\vx)\|$ for a smooth convex function that does not require knowledge of $L$ and works with inexact projections. We then show how to couple this method with adaptive tuning of the regularization parameter and restarts to obtain an overall near-optimal and parameter-free method for smooth strongly convex minimization. This subsection can be read independently from the rest of the paper.

%%%%%%%%%%%%%%%%%%%%%%%%%%%%%%
\subsubsection{Small Gradient Mapping of Smooth Functions}

Let $\cc\subseteq \rr^n$ be a closed, convex, nonempty set and assume that $f: \rr^n \to \rr$ is $L$-smooth on $\cc.$ 
The rough idea of the regularization trick is the following: instead of working directly with $f,$ use a method for smooth strongly convex minimization to minimize ${f}_{\sigma}(\vx) = f(\vx) + \frac{\sigma}{2}\|\vx - \vx_0\|^2$ for some sufficiently small $\sigma > 0$ (for accuracy $\epsilon >0$, $\sigma = \Theta\left(\frac{\epsilon}{\|\vx^*_{\cc} - \vx_0\|}\right)$ suffices, where $\vx^*_{\cc} \in \argmin_{\vx \in \cc} f(\vx)$). As we select $\sigma$ ourselves, the method can assume knowledge of the strong convexity parameter $\sigma$ of ${f}_{\sigma}(\vx)$.  

The method that we employ here is a variant of $\mu$AGD+ from~\citet{cohen2018acceleration}, which is similar to~\citet{diakonikolas2019lacg}. However, unlike~\citet{cohen2018acceleration,diakonikolas2019lacg}, this method is adapted to work with an unknown smoothness parameter and to provide convergence guarantees on $\|G_{\eta}(\vx)\|$. One iteration of the algorithm is provided in AGD-Iter (Algorithm~\ref{algo:ACC-iter}). In the algorithm statement, we use $\stackrel{\epsilon}{\sim}\argmin$ to indicate that the function that follows the $\argmin$ is minimized to additive error $\epsilon.$ The main result is summarized in the following lemma, while the complete analysis is deferred to Appendix~\ref{appx:section:acceralated-analysis}.

\begin{algorithm}
  \caption{AGD-Iter($\vy_{k-1}, \mathbf{v}_{k-1}, \vz_{k-1}, A_{k-1}, \eta_k, \sigma, \epsilon_0, \eta_0$)}\label{algo:ACC-iter}
  \begin{algorithmic}[1]
  \State $\eta_k = \eta_k/2$
  \Repeat
  \State $\eta_k = 2\eta_k$
  \State $\theta_k = \sqrt{\frac{\sigma}{2(\eta_k + \sigma)}},$ $a_k = \frac{\theta_k}{1-\theta_k}A_{k-1}$ 
  \State $\epsilon_k^{\ell} = \theta_k \epsilon_0/4,$ $\epsilon_k^M = a_k \epsilon_0/4$ 
  \State $\vx_k = \frac{1}{1+\theta_k}\vy_{k-1} + \frac{\theta_k}{1+\theta_k}\mathbf{v}_{k-1}$
  \State $\vz_k = \vz_{k-1} -a_k \nabla f_{\sigma}(\vx_k) + \sigma a_k \vx_k$
  \State $\mathbf{v}_k \stackrel{\epsilon_k^M}{\sim}\argmin_{\vu \in \cc} M_k(\vu)$, where $M_k(\vu) = -\innp{\vz_k, \vu} + \frac{\sigma A_k + \eta_0}{2}\|\vu\|^2$ \label{alg:AGD-iter:Minimize_M}
  \State $\vyh_k = (1-\theta_k)\vy_{k-1} + \theta_k \mathbf{v}_k$
  \State $\vy_k \stackrel{\epsilon_k^{\ell}}{\sim}\argmin_{\vu \in \cc}\ell_k(\vu)$, where $\ell_k(\vu) = \innp{\nabla f_{\sigma}(\vyh_k), \vu - \vyh_k} + \frac{\eta_k + \sigma}{2}\|\vu - \vyh_k\|^2$ \label{alg:AGD-iter:Minimize_l}
  \Until{$f(\vyh_k) \leq f(\vx_k)+ \innp{\nabla f(\vx_k), \vyh_k - \vx_k} + \frac{\eta_k}{2}\|\vyh_k - \vx_k\|^2$ and $f(\vy_k) \leq f(\vyh_k)+ \innp{\nabla f(\vyh_k), \vy_k - \vyh_k} + \frac{\eta_k}{2}\|\vy_k - \vyh_k\|^2$}
  \State \Return $\eta_k, A_{k} = A_{k-1} + a_k, \vz_k, \mathbf{v}_k, \vyh_k, \vy_k, \tG_{\eta_k + \sigma}^{\sigma} (\vyh_k) = (\eta_k + \sigma)(\vyh_k - \vy_k)$
  \end{algorithmic}
\end{algorithm}

\begin{restatable}{lemma}{ACClemma}\label{lemma:ACC-inner}
Let $\cc \subseteq \rr^n$ be a closed convex set and let $f:\rr^n \to \rr$ be an $L$-smooth function on $\cc.$ Let $\vx_0 \in \cc$ be an arbitrary initial point, and, given $\sigma > 0,$ define $f_{\sigma}(\vx) = f(\vx) + \frac{\sigma}{2}\|\vx - \vx_0\|^2,$  $\vx^*_{\sigma} = \argmin_{\vx \in \cc}f_{\sigma}(\vx).$ Let $\vz_0 = (\eta_0 + \sigma)\vx_0 - \nabla f(\vx_0),$ $\vy_0 = \mathbf{v}_0 = \vyh_0 \stackrel{\epsilon_0^M}{\sim}\argmin_{\vu \in \cc} M_0(\vu)$, where $\epsilon_0^M > 0$, $M_0(\vu)$ is defined as in Algorithm~\ref{algo:ACC-iter}, and the estimate $\eta_0$ is doubled until $f(\vy_0) \leq f(\vx_0) + \innp{\nabla f(\vx_0), \vy_0 -\vx_0} + \frac{\eta_0}{2}\|\vy_0 - \vx_0\|^2,$ same as in Algorithm~\ref{algo:ACC-iter}. Given $\eta_0 > 0,$ sequence $\{a_k\}_{k \geq 0}$, $A_k = \sum_{i=0}^k a_i,$ $\theta_k = \frac{a_k}{A_k}$, and the sequences of errors $\{\epsilon_k^M\}_{k\geq 0}$, $\{\epsilon_k^{\ell}\}_{k\geq 0},$ let the sequences of points $\{\vyh_k, \vy_k\}_{k \geq 0}$ evolve according to Algorithm~\ref{algo:ACC-iter} for $k \geq 1$. 

If $a_0 = A_0 = 1$ and $\theta_k = \frac{a_k}{A_k} \leq \sqrt{\frac{\sigma}{2(\eta_k + \sigma)}}$ for $k \geq 1,$ then for all $k \geq 1$

$$
    \frac{1}{\eta_k + \sigma}\|G_{\eta_k + \sigma}^{\sigma}(\vyh_k)\|^2 \leq \frac{1}{A_k} \Big({\eta_0\|\vx^*_{\sigma} - \vx_0\|^2} + 2\sum_{i=0}^k (2\epsilon_i^M + \epsilon_i^{\ell})\Big),
$$

where 

$G_{\eta_k + \sigma}^{\sigma}(\vyh_k) = (\eta_k + \sigma)(\vyh_k - P_{\cc}(\vyh_k - \frac{1}{\eta_k + \sigma}\nabla f_{\sigma}(\vyh_k)))$ 

is the gradient mapping w.r.t.~$f_{\sigma}$, at $\vyh_k.$ 

In particular, if $a_0 = A_0 = 1$, $\theta_k = \frac{a_k}{A_k} = \sqrt{\frac{\sigma}{2(\eta_k + \sigma)}}$ for $k \geq 1,$ $\epsilon_k^M = \frac{a_k\epsilon^2}{8} ,$ and $\epsilon_k^{\ell} \leq \frac{a_k}{A_k}\frac{\epsilon^2}{8},$ then $\frac{1}{\eta_k + \sigma}\|G_{\eta_k}^{\sigma}(\vyh_k)\|^2 \leq \epsilon^2$ 
after at most
$$
    k = O\left(\sqrt{\frac{L}{\sigma}}\log\left(\frac{L\|\vx^*_{\sigma} - \vx_0\|}{\epsilon}\right)\right)
$$
iterations. Further, the total number of first-order queries to $f$ and oracle queries to inexact projections is at most
$$
    k' = k + 2\log\left(\frac{2L}{\eta_0}\right). 
$$
\end{restatable}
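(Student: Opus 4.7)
The plan is to adapt the Nesterov estimate-sequence framework for accelerated gradient methods on smooth strongly convex objectives to the two novel features of AGD-Iter: (i) the unknown smoothness constant $L$, handled by doubling $\eta_k$ inside the \texttt{repeat-until} loop, and (ii) the inexact minimizations of the quadratic subproblems $M_k$ and $\ell_k$, whose additive errors $\epsilon_k^M$ and $\epsilon_k^{\ell}$ must be propagated carefully. The skeleton parallels the $\mu$AGD+ analysis used in \citet{cohen2018acceleration, diakonikolas2019lacg}, but the conclusion is phrased in terms of the gradient mapping of $f_{\sigma}$ rather than its primal gap. To set the stage, the backtracking loop terminates quickly: since $f$ is $L$-smooth both line-search inequalities hold once $\eta_k \geq L$, so the accepted $\eta_k$ is always at most $2L$, and a standard amortized counting argument (each doubling is paid either by the single halving at the start of the next call or by the start-up phase of length $\log_2(2L/\eta_0)$) yields the stated $k + 2\log_2(2L/\eta_0)$ bound on the total number of first-order queries. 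Adding $\tfrac{\sigma}{2}\|\cdot - \vx_0\|^2$ to both sides of each accepted line-search inequality converts them into $f_{\sigma}$-smoothness descent inequalities with effective parameter $\eta_k + \sigma$, to be used throughout.

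The first key ingredient is a one-step descent for $f_{\sigma}$: the exact minimizer of $\ell_k$ over $\cc$ is $P_{\cc}\bigl(\vyh_k - \tfrac{1}{\eta_k + \sigma}\nabla f_{\sigma}(\vyh_k)\bigr)$ with exact minimum value $-\tfrac{1}{2(\eta_k + \sigma)}\|G^{\sigma}_{\eta_k + \sigma}(\vyh_k)\|^2$, so combining the $\epsilon_k^{\ell}$-approximate minimality of $\vy_k$ with the $f_{\sigma}$-smoothness yields
\begin{equation*}
f_{\sigma}(\vy_k) \;\leq\; f_{\sigma}(\vyh_k) \;-\; \tfrac{1}{2(\eta_k + \sigma)}\bigl\|G^{\sigma}_{\eta_k + \sigma}(\vyh_k)\bigr\|^{2} \;+\; \epsilon_k^{\ell}.
\end{equation*}
The second ingredient is the Nesterov lower envelope
\begin{equation*}
\phi_k(\vu) \;=\; \tfrac{\eta_0}{2}\|\vu - \vx_0\|^2 \;+\; \textstyle\sum_{i=0}^{k} a_i\bigl(f_{\sigma}(\vx_i) + \inner{\nabla f_{\sigma}(\vx_i)}{\vu - \vx_i} + \tfrac{\sigma}{2}\|\vu - \vx_i\|^2\bigr),
\end{equation*}
which by convexity of $f_{\sigma}$ satisfies $\phi_k(\vu) \leq A_k f_{\sigma}(\vu) + \tfrac{\eta_0}{2}\|\vu - \vx_0\|^2$ for every $\vu \in \cc$. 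A direct expansion shows that $\phi_k$ and $M_k$ differ only by a $\vu$-independent constant, which is precisely what the recursion $\vz_k = \vz_{k-1} - a_k\nabla f_{\sigma}(\vx_k) + \sigma a_k \vx_k$ encodes, so $\mathbf{v}_k$ is also an $\epsilon_k^M$-approximate minimizer of the $(\sigma A_k + \eta_0)$-strongly convex $\phi_k$, and the standard inexact-minimizer-of-a-strongly-convex-quadratic argument yields a three-point inequality of the form $\phi_k(\vu) \geq \phi_k(\mathbf{v}_k) + \Theta(\sigma A_k + \eta_0)\|\vu - \mathbf{v}_k\|^2 - 2\epsilon_k^M$ for every $\vu \in \cc$.

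The heart of the argument is an inductive proof of $A_k f_{\sigma}(\vy_k) \leq \phi_k(\mathbf{v}_k) + \sum_{i=0}^{k}(2\epsilon_i^M + \epsilon_i^{\ell})$. The inductive step combines (a) the one-step descent above, multiplied by $A_k$ and composed with the convex-combination bound $A_k f_{\sigma}(\vyh_k) \leq A_{k-1} f_{\sigma}(\vy_{k-1}) + a_k f_{\sigma}(\mathbf{v}_k)$ coming from $\vyh_k = (1-\theta_k)\vy_{k-1} + \theta_k \mathbf{v}_k$, (b) the three-point inequality for $\phi_{k-1}$ at $\vu = \mathbf{v}_k$, and (c) the convexity lower bound $f_{\sigma}(\mathbf{v}_k) \geq f_{\sigma}(\vx_k) + \inner{\nabla f_{\sigma}(\vx_k)}{\mathbf{v}_k - \vx_k} + \tfrac{\sigma}{2}\|\mathbf{v}_k - \vx_k\|^2$, which exactly matches the increment $\phi_k(\mathbf{v}_k) - \phi_{k-1}(\mathbf{v}_k)$. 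The condition $\theta_k \leq \sqrt{\sigma/(2(\eta_k + \sigma))}$ is precisely what is needed to absorb the residual $O(a_k^2 \eta_k / A_k)\|\mathbf{v}_k - \mathbf{v}_{k-1}\|^2$ cross term against the strong-convexity bonus. Combining the induction with $\phi_k(\vx^*_{\sigma}) \leq A_k f_{\sigma}(\vx^*_{\sigma}) + \tfrac{\eta_0}{2}\|\vx^*_{\sigma} - \vx_0\|^2$ and the three-point inequality at $\vu = \vx^*_{\sigma}$, together with the fact that the descent has already been discharged inside the induction so each $\epsilon_i^{\ell}$ contributes only once and additively, yields the stated bound on $\|G^{\sigma}_{\eta_k + \sigma}(\vyh_k)\|^{2}/(\eta_k + \sigma)$.

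The main obstacle is the delicate bookkeeping needed to ensure that the two inexactness sources $\epsilon_i^M$ and $\epsilon_i^{\ell}$ enter the final bound only through the additive sum $\sum_i(2\epsilon_i^M + \epsilon_i^{\ell})$ and never with an $A_k$ prefactor that would destroy acceleration; the trick is to fold the one-step descent into the induction itself, so that each $\epsilon_i^{\ell}$ is absorbed at iteration $i$ rather than at the final iteration $k$. Once the main inequality is in place, the ``In particular'' conclusion is a direct computation: the choice $\theta_k = \sqrt{\sigma/(2(\eta_k + \sigma))}$ together with $\eta_k \leq 2L$ forces the geometric growth $A_k = \Omega\bigl((1 + c\sqrt{\sigma/L})^k\bigr)$ for a constant $c > 0$, while the prescribed error schedule $\epsilon_k^M = a_k \epsilon^2/8$ and $\epsilon_k^{\ell} \leq (a_k/A_k)\epsilon^2/8$ keeps the right-hand side at most $\epsilon^2 A_k$, yielding the claimed iteration count $k = O\bigl(\sqrt{L/\sigma}\log(L\|\vx^*_{\sigma} - \vx_0\|/\epsilon)\bigr)$; the oracle-query count follows from the amortized doubling argument of the first paragraph.
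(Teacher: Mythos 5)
Your overall architecture is the same as the paper's: the estimate function $\phi_k$ you construct is, up to the additive constant $\sum_i a_i f_{\sigma}(\vx_i)$, exactly the lower-bound function $m_k$ in the paper's Approximate Duality Gap argument, and the gradient-mapping readout via the $\epsilon_k^{\ell}$-approximate minimization of $\ell_k$, the three-point inequality for the inexact minimizer of the $(\sigma A_k + \eta_0)$-strongly convex subproblem, the role of $\theta_k \leq \sqrt{\sigma/(2(\eta_k+\sigma))}$, and the amortized count of backtracking doublings all match. However, step (a) of your inductive step is the wrong inequality, and as written the induction cannot be closed. You propose $A_k f_{\sigma}(\vyh_k) \leq A_{k-1}f_{\sigma}(\vy_{k-1}) + a_k f_{\sigma}(\mathbf{v}_k)$ by convexity along $\vyh_k = (1-\theta_k)\vy_{k-1} + \theta_k \mathbf{v}_k$, and then to match $a_k f_{\sigma}(\mathbf{v}_k)$ against the increment $\phi_k(\mathbf{v}_k) - \phi_{k-1}(\mathbf{v}_k)$ via your step (c). But (c) gives $a_k f_{\sigma}(\mathbf{v}_k) \geq \phi_k(\mathbf{v}_k) - \phi_{k-1}(\mathbf{v}_k)$, which is the wrong direction for discharging an \emph{upper} bound containing $+a_k f_{\sigma}(\mathbf{v}_k)$; the mirror point $\mathbf{v}_k$ admits no useful upper bound on its function value, so that term cannot be eliminated. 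The working mechanism (and the one the paper uses) is instead to upper-bound $f_{\sigma}(\vyh_k)$ by the accepted line-search inequality at $\vx_k$, namely $f_{\sigma}(\vyh_k) \leq f_{\sigma}(\vx_k) + \innp{\nabla f_{\sigma}(\vx_k), \vyh_k - \vx_k} + \frac{\eta_k+\sigma}{2}\|\vyh_k - \vx_k\|^2$, split the linear term using $A_k \vyh_k = A_{k-1}\vy_{k-1} + a_k \mathbf{v}_k$, control the $\vy_{k-1}$ part by convexity of $f_{\sigma}$ at $\vx_k$, and match the $\mathbf{v}_k$ part against the \emph{linear} part of the increment of $\phi_k$. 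This is also the only route by which the quadratic $\frac{\eta_k+\sigma}{2}\|\vyh_k - \vx_k\|^2 = \frac{(\eta_k+\sigma)\theta_k^2}{2}\|\mathbf{v}_k - (1-\theta_k)\mathbf{v}_{k-1} - \theta_k \vx_k\|^2$ --- the cross term you correctly say the condition on $\theta_k$ must absorb --- ever appears; with your convex-combination bound there is no such quadratic, so the $\theta_k$ condition would have nothing to act on.

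A second, smaller issue: your claim that the $\epsilon_i^{\ell}$ errors enter without an $A_i$ prefactor because the descent is "folded into the induction" does not hold up. The descent inequality $f_{\sigma}(\vy_i) \leq f_{\sigma}(\vyh_i) - \frac{1}{2(\eta_i+\sigma)}\|G_{\eta_i+\sigma}^{\sigma}(\vyh_i)\|^2 + \epsilon_i^{\ell}$ must be scaled by $A_i$ to feed the potential $A_i f_{\sigma}(\vy_i)$, so the error necessarily appears as $A_i \epsilon_i^{\ell}$ --- this is precisely what the paper's own telescoping produces. It is harmless for the final conclusion, since the schedule $\epsilon_i^{\ell} \leq \frac{a_i}{A_i}\frac{\epsilon^2}{8}$ gives $\sum_i A_i \epsilon_i^{\ell} \leq \frac{A_k \epsilon^2}{8}$, but you should carry $A_i\epsilon_i^{\ell}$ through the argument rather than assert it can be avoided. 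The remaining ingredients --- the bound $\eta_k \leq 2L$, the geometric growth of $A_k$, and the oracle-query count --- are correct and match the paper.
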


%%%%%%%%%%%%%%%%%%%%%%%%%%%%%%
\subsubsection{Adaptive Regularization and Restarts}

We now provide the full details for the updates of the accelerated sequence. There are two main questions that need to be addressed: (1) how to adaptively adjust the regularization parameter $\sigma$ and (2) how to perform adaptive restarts based on inexact evaluations of the gradient mapping. For the former, note that, to obtain a near-optimal algorithm, $\sigma$ should not be allowed to decrease too much (and in fact, needs to satisfy $\sigma = \Omega(m)$; see the proof of Theorem~\ref{thm:ACC-full}). 

For the latter, we can rely on the strong convexity of the function $\ell_k(\vu) = \innp{\nabla f_{\sigma}(\vyh_k), \vu - \vy_k} + \frac{\eta_k + \sigma}{2}\|\vu - \vyh_k\|^2$ to relate the exact and the inexact gradient mapping at  $\vyh_k.$ 

We state the main convergence bound of the ACC algorithm (Algorithm~\ref{algo:ACC:main}) here, while the detailed description of the algorithm and its analysis are deferred to Appendix~\ref{appx:section:acceralated-analysis}, for space considerations. Note that the algorithm name stems from \emph{ACCeleration}.

\begin{algorithm}
  \caption{ACC($\vx_0, \eta_0, \sigma$)}\label{algo:ACC:main}
  \begin{algorithmic}[1]
    \State $\sigma = 2 \sigma$
    \Repeat 
    \State $\sigma = \sigma / 2$
    \State Run a minimization procedure for $\ell_0(\vu) = \innp{\nabla f(\vx_0), \vu - \vx_0} + \frac{\eta_0 + \sigma}{2}\|\vu - \vx_0\|^2.$ Halt when the current iterate $\vy$ of the procedure satisfies $\ell_0(\vy) - \min_{\vu \in \cc}\ell_0(\vu) \leq \epsilon_0,$ where $\epsilon_0 = \frac{\eta_0 + \sigma}{32}\|\vy_0 - \vx_0\|^2 = \frac{1}{32(\eta_0 + \sigma)}\|\tG_{\eta_0 + \sigma}(\vx_0)\|^2.$ 
    \State Set $\vyh_0 = \mathbf{v}_0 = \vy_0$; $\vz_0 = (\eta_0 + \sigma)\vx_0 - \nabla f(\vx_0)$
    \State $a_0 = A_0 = 1$
        \Repeat
            \State $k = k + 1$
            \State $\eta_k, A_k, \vz_k, \mathbf{v}_k, \vyh_k, \vy_k, \tG_{\eta_k + \sigma}^{\sigma}(\vyh_k) = \mathrm{AGD-Iter}(\vy_{k-1}, \mathbf{v}_{k-1}, \vz_{k-1}, A_{k-1}, \eta_{k-1}, \sigma, \epsilon_0, \eta_0)$
        \Until{$\frac{1}{\eta_k + \sigma}\|\tG_{\eta_k + \sigma}^{\sigma}(\vyh_k)\|^2 \leq \frac{9\epsilon_0}{4}$}
    \Until{$\frac{\sigma}{\sqrt{\eta_k + \sigma}}\|\vyh_k - \vx_0\| \leq \sqrt{\epsilon_0}$}
    \State\Return $\vyh_k, \eta_k \sigma$
  \end{algorithmic}
\end{algorithm}

\begin{restatable}{theorem}{thmACC}\label{thm:ACC-full}
Let $\cc \subseteq \rr^n$ be a closed convex set, and let $f:\rr^n \to \rr$ be a function that is $L$-smooth and $m$-strongly convex on $\cc.$ Let $\vx_0 \in \cc$ be an arbitrary initial point, and let $\sigma_0, \eta_0$ be such that $m \leq \sigma_0 \leq \eta_0 \leq L$. Let $\vx_0^{\mathrm{out}} = \vx_0,$ and for $k \geq 0,$ consider the following updates:
$$
    \vx_{k+1}^{\mathrm{out}}, \eta_{k+1}, \sigma_{k+1} = \mathrm{ACC}(\vx_{k}^{\mathrm{out}}, \eta_{k}, \sigma_{k}),
$$
where ACC is provided in Algorithm~\ref{algo:ACC:main}. Let $G_{\eta + \sigma}$ denote the gradient mapping of $f$ on $\cc$ with parameter $\eta + \sigma.$ Then, for any $\epsilon >0,$ $\|G_{\eta_k + \sigma_k}(\vx_k^{\mathrm{out}})\| \leq \epsilon$ for $k = O\left(\log\left(\frac{L\|G_{\eta_0 + \sigma_0}(\vx_0)\|}{m\epsilon}\right)\right).$ The algorithm for computing $\vx_k^{\mathrm{out}}$ utilizes a total number of 
$$
    K = \left(\sqrt{\frac{L}{m}}\log\left(\frac{L}{m}\right)\log\left(\frac{L}{m}\frac{\|G_{\eta_0 + \sigma_0}(\vx_0)\|}{\epsilon}\right)\right)
$$
queries to the FOO for $f$ and an inexact and efficiently computable projection oracle for $\cc,$ without knowledge of any of the problem parameters.
\end{restatable}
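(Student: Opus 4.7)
The plan is to peel the algorithm into three layers --- the adaptive $\sigma$-halving, the restart on the inexact gradient mapping $\|\tG^\sigma_{\eta_k + \sigma}\|$, and the backtracking on $\eta$ inside AGD-Iter --- and handle them one at a time. For a single call to ACC at a fixed value of $\sigma$, Lemma~\ref{lemma:ACC-inner} applied with $\theta_k = \sqrt{\sigma/(2(\eta_k + \sigma))}$ already delivers the inner-loop bound: the termination criterion $\|\tG^\sigma_{\eta_k + \sigma}(\vyh_k)\|^2/(\eta_k + \sigma) \leq 9\epsilon_0/4$ is reached in $O(\sqrt{L/\sigma}\,\log(L\|\vx^*_\sigma - \vx_0\|/\sqrt{\epsilon_0}))$ iterations, and the standard backtracking analysis adds only an $O(\log(L/\eta_0))$ overhead because each successful doubling multiplies $\eta_k$ by two and $\eta_k$ remains bounded by $O(L)$.

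Next I would argue that each completed call to ACC halves the exact gradient mapping of $f$. Writing $\nabla f_\sigma(\vx) = \nabla f(\vx) + \sigma(\vx - \vx_0)$ and using non-expansiveness of the projection gives
\[
  \|G_{\eta_k + \sigma}(\vyh_k) - G^\sigma_{\eta_k + \sigma}(\vyh_k)\| \leq \sigma\|\vyh_k - \vx_0\|,
\]
whose squared norm over $\eta_k + \sigma$ is bounded by $\epsilon_0$ via the outer termination test $\sigma\|\vyh_k - \vx_0\|/\sqrt{\eta_k + \sigma} \leq \sqrt{\epsilon_0}$. The $(\eta_k + \sigma)$-strong convexity of $\ell_k$, together with the error budget $\epsilon_k^{\ell}$ prescribed in Lemma~\ref{lemma:ACC-inner}, then converts the inexact gradient mapping $\tG^\sigma_{\eta_k + \sigma}(\vyh_k)$ into its exact counterpart $G^\sigma_{\eta_k + \sigma}(\vyh_k)$ at an additive cost absorbed into the same $\epsilon_0$, so a triangle inequality yields $\|G_{\eta_k + \sigma}(\vyh_k)\|^2/(\eta_k + \sigma) = O(\epsilon_0)$. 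Since $\epsilon_0$ is initialized at the start of each call to a small constant times $\|\tG_{\eta_0 + \sigma}(\vx_0)\|^2/(\eta_0 + \sigma)$, a careful choice of the constants delivers
\[
  \|G_{\eta_{k+1} + \sigma_{k+1}}(\vx_{k+1}^{\mathrm{out}})\| \leq \tfrac{1}{2}\|G_{\eta_k + \sigma_k}(\vx_k^{\mathrm{out}})\|,
\]
so $O(\log(\|G_{\eta_0 + \sigma_0}(\vx_0)\|/\epsilon))$ outer iterations suffice to reach accuracy $\epsilon$.

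For the overall query count I would show that the $\sigma$-halving inside each ACC call stops at $\sigma_k = \Omega(m)$. Once $\sigma$ has been halved into the range $\Theta(m)$, the $(\sigma + m)$-strong convexity of $f_\sigma$ controls both $\|\vyh_k - \vx_0\|$ and $\|\vx^*_\sigma - \vx_0\|$ by $O(\|G\|/\sigma)$; plugging this into the outer termination criterion shows that the condition is already met, hence $\sigma$-halving terminates after $O(\log(L/m))$ steps and $\sigma_k \geq \Omega(m)$ throughout. Substituting $\sigma = \Omega(m)$ into the inner bound yields $O(\sqrt{L/m}\,\log((L/m)\cdot\|G\|/\epsilon))$ queries per outer iteration, which multiplied by the $O(\log(\|G\|/(m\epsilon)))$ outer count produces the claimed $K$, with the extra $O(\log(L/m))$ factor coming from the combined $\sigma$-halving and $\eta$-backtracking overhead.

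The main obstacle is the intertwined handling of three inexactness sources --- the approximate minimizations of $M_k$ and $\ell_k$, the unknown smoothness $L$, and the unknown strong convexity $m$ --- inside the halving argument: one must verify that an outer termination test based solely on the inexact gradient mapping of $f_\sigma$ still certifies halving of the exact gradient mapping of $f$, which requires the per-iteration error budgets $\epsilon_k^M, \epsilon_k^{\ell}$ from Lemma~\ref{lemma:ACC-inner} to propagate cleanly through the telescoping sum defining $\vz_k$ and through the added $\sigma$-regularization term without accumulating across the $O(\log)$ outer restarts.
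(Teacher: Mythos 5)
Your proposal follows essentially the same route as the paper's proof: bounding a single ACC call via Lemma~\ref{lemma:ACC-inner} with $\|\vx^*_\sigma - \vx_0\|$ controlled through strong convexity of $f_\sigma$, converting the inexact gradient mapping to the exact one via strong convexity of $\ell_k$ (Proposition~\ref{prop:grad-map-approx}) and the non-expansiveness bound $\|G_{\eta_k+\sigma} - G^\sigma_{\eta_k+\sigma}\| \leq \sigma\|\vyh_k - \vx_0\|$, showing the outer exit test forces $\sigma = \Omega(m)$, and deducing per-call halving of the exact gradient mapping to multiply out the total query count. The decomposition, the key lemmas invoked, and the way the three inexactness sources are disentangled all match the paper's argument.
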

Observe that, up to the $\log(L/m)$ factor, the bound in Theorem~\ref{thm:ACC-full} is optimal for the class of smooth strongly convex functions, under the local oracle model (which includes the FOO model as a special case).

%%%%%%%%%%%%%%%%%%%%%%%%%%%%%%
\subsection{Coupling of Methods and Local Acceleration}

We now show how to couple the AFW algorithm (Algorithm~\ref{alg:fafw} in Appendix~\ref{appx:FAFW}) with the algorithm described in the previous subsection to achieve local acceleration. The key observation is that after a burn-in phase whose length is independent of $\epsilon$, every active set that AFW constructs contains $\vx^* = \argmin_{\vx \in \cc} f(\vx).$ Thus, minimizing $f(\vx)$ over $\cx$ becomes equivalent to minimizing $f(\vx)$ over the convex hull of the active set of AFW. Of course, the algorithm needs to adapt to this setting without knowledge of when the burn-in phase has ended. The pseudocode for the resulting algorithm Parameter-Free Locally Accelerated Conditional Gradients (PF-LaCG) is provided in Algorithm~\ref{algo:PF_LaCG}. The coupling between AFW and ACC in PF-LaCG is illustrated on a simple example in Fig.~\ref{main:fig:pflacg}.

\begin{figure}[h]
\centering
\includegraphics[width=0.6\textwidth]{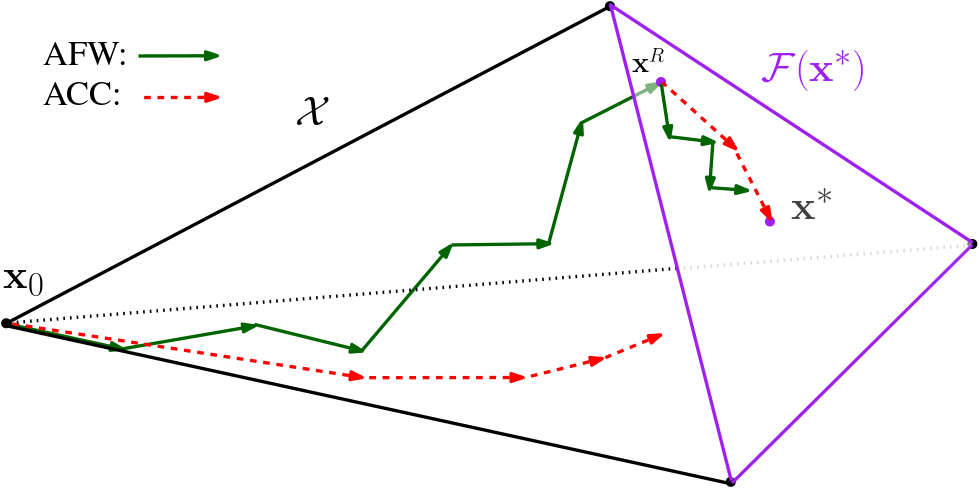} 
\caption{An example of coupling between AFW and ACC in PF-LaCG on a tetrahedron as the feasible set, %Consider this simple example where we are optimizing over a tetrahedron, 
starting from initial point $\vx_0$ with the base of the tetrahedron as its support $\cs_0$. The two algorithms are run in parallel from $\vx_0$: AFW optimizes over the entire tetrahedron, allowing it to add and remove vertices, while ACC optimizes over the base of the tetrahedron only and it cannot converge to the optimal point $\vx^*$, as $\vx^*\notin \mathrm{co} (\cs_0)$. After several iterations, once the restart criterion for AFW is triggered, PF-LaCG chooses the output point of AFW over that of ACC, as $w^{\AFW} \leq \min\{ w^{\ACC}, w^{\ACC}_{\mathrm{prev}}/2\}$, hence a PF-LaCG restart occurs at $\vx^R$. For ease of exposition we assume that the point outputted by AFW is contained in $\cf (\vx^*)$ after a single halving of $w(\vx, \cs)$, although in practice several restarts may be needed for AFW to reach $\cf (\vx^*)$. Since $\vx^R$ is on the optimal face $\cf (\vx^*)$, PF-LaCG has completed the burn-in phrase. The two algorithms again run in parallel from $\vx^R$ after the restart. However, ACC converges to the optimal $\vx^*$ at an accelerated rate, much faster than AFW. Hence, local acceleration is achieved by PF-LaCG while being at least as fast as vanilla AFW.} \label{main:fig:pflacg}
\end{figure}

\begin{algorithm}
  \caption{PF-LaCG($\vx_0 \in \vertex(\cx), \epsilon >0$)}\label{algo:PF_LaCG}
  \begin{algorithmic}[1]
    \State $\cs^{\out} = \{\vx_0\}$, $\vx^{\mathrm{out}} = \vx_0,$ $w^{\mathrm{out}} = w(\vx_0, \cs_0)$
    \State $\vx^{\AFW} = \vx^{\ACC} = \vx^{\out},$ $\cs^{\AFW} = \cs^{\ACC} = \cs^{\out}$, $w^{\AFW}_{\mathrm{prev}} = w^{\out}$
    \While{$w^{\mathrm{out}} > \epsilon$}
        \State Run AFW and restarted ACC (Theorem~\ref{thm:ACC-full}) in parallel, and let $(\vx^{\AFW}, \cs^{\AFW})$ and $(\vx^{{\ACC}}, \cs^{{\ACC}})$ denote their respective most recent output points and active sets; note that ACC is run on $\cc = \co(\cs^{{\ACC}})$ \alglinelabel{Ln:PF-LaCG-parallel}
        \If{$w^{\AFW} = w(\vx^{{\AFW}}, \cs^{\AFW}) \leq \frac{1}{2}w^{\AFW}_{\mathrm{prev}}$} \Comment{Restart criterion}\alglinelabel{Ln:PF-LaCG-restart}
            \State $w^{\AFW}_{\mathrm{prev}} = w^{\AFW},$ $w^{\ACC}_{\mathrm{prev}} = w^{\ACC},$
            \State Compute $w^{\ACC} = w(\vx^{\ACC}, \cs^{\ACC})$
            \If{$w^{\AFW} \leq \min\{ w^{\ACC}, w^{\ACC}_{\mathrm{prev}}/2\}$} \alglinelabel{Ln:PF-LaCG-ACC-update}
                \State $\vx^{\ACC} = \vx^{\AFW}$, $\cs^{\ACC} = \cs^{\AFW}$
                \State $\vx^{\out} = \vx^{\AFW},$ $\cs^{\out} = \cs^{\AFW},$ $w^{\out} = w^{\AFW}$
            \Else\If{$|\cs^{\ACC}| \leq |\cs^{\AFW}|$}
                 \State $\vx^{\AFW} = \vx^{\ACC}$,  $\cs^{\AFW} = \cs^{\ACC}$, $w^{\AFW} = w^{\ACC}$
                \EndIf
            \State $\vx^{\out} = \vx^{\ACC},$ $\cs^{\out} = \cs^{\ACC},$ $w^{\out} = w^{\ACC}$
            \EndIf
        \EndIf
    \EndWhile
    \State\Return $\vx^{\out}$
  \end{algorithmic}
\end{algorithm}

For simplicity, Algorithm~\ref{algo:PF_LaCG} is stated with the accelerated sequence started with $\vx_0\in \vertex \left( \cx \right)$ and with the active set $\cs_0 = \{\vx_0\}$. As $\cs_0$ contains only one vertex, there is no need to run the accelerated algorithm until a later restart (triggered by the condition in Line~\ref{Ln:PF-LaCG-restart}) where the active set contains more than one vertex occurs. Additionally, for the bound on the number of oracle queries in Theorem~\ref{thm:main} to hold, we need the iterations of ACC (Algorithm~\ref{algo:ACC}) and AFW (Algorithm~\ref{alg:fafw}) to be aligned, in the sense that one iteration of ACC (Algorithm~\ref{algo:ACC-iter}) occurs in parallel to one iteration of AFW. This is only needed for the theoretical bound on oracle complexity. In practice, the two algorithms can be run in parallel without aligning the iterations, so that the overall execution time (modulo coordination at restarts) is never higher than for running AFW alone. 

We are now ready to state our main result. The complete proof is deferred to Appendix~\ref{appx:section:PFLaCG}.

\begin{theorem}\label{thm:main}
  Let $\cx \subseteq \rr^n$ be a closed convex polytope of diameter $D$, and let $f:\rr^n \to \rr$ be a function that is $L$-smooth and $m$-strongly convex on $\cx.$ Let Assumptions~\ref{defn:delta-scaling} and~\ref{assumption:strictComplementarity} be satisfied for $f, \cx$. Denote $\vx^{*} = \argmin_{\vx \in \cx}f(\vx).$ Given $\epsilon > 0,$ let $\vx^{\out}$ be the output point of PF-LaCG (Algorithm~\ref{algo:PF_LaCG}), initialized at an arbitrary vertex $\vx_0$ of $\cx$. Then $w(\vx^{\out}, \cs^{\out}) \leq \epsilon$ and PF-LaCG uses a total of at most
  \begin{align*}
        K = O\bigg(\min\bigg\{& \log \left( \frac{w (\vx_0, \cs_0)}{LD^2} \right) + \frac{LD^2}{m \delta^2} \log \left( \frac{w (\vx_0, \cs_0)}{\epsilon} \right),\\
       & K_0 + K_1 + \sqrt{\frac{L}{m}}\log\left(\frac{L}{m}\right)\log\Big(\frac{LD}{m\delta}\Big)\log\left(\frac{LD}{\epsilon}\right)\bigg\}\bigg)
  \end{align*}
  queries to the FOO for $f$ and the LMO for $\cx$, where 
    \begin{align*}
  K_0 = \;&\frac{32L}{m \ln 2} \left(\frac{D}{\delta(\cx)}\right)^2\\
  &\cdot\log \Big( \frac{2 w(\vx_0, \cs_0)}{\min\{  \frac{1}{L}\Big(\frac{\tau}{(2D(\sqrt{L/m} + 1))}\Big)^2 , \tau, LD^2, 2 w_c \}}\Big),
  \end{align*}
  and $K_1 = \frac{128 LD^2}{m\delta^2}$. 
\end{theorem}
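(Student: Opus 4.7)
The plan is to first establish the termination guarantee $w(\vx^{\out}, \cs^{\out}) \leq \epsilon$, which is immediate from the while-loop exit condition, and then bound the oracle complexity by the minimum of the two stated quantities. The non-accelerated (first) bound is essentially inherited from AFW alone: inspecting Algorithm~\ref{algo:PF_LaCG}, at every restart the triple $(\vx^{\out}, \cs^{\out}, w^{\out})$ is set either to the AFW triple or to the ACC triple (whichever has the smaller Wolfe gap), so $w^{\out} \leq w^{\AFW}$ holds throughout. Since the AFW subroutine is invoked with the halving exit criterion of Algorithm~\ref{alg:fafw:main}, the analysis of \citet{kerdreux2019restartfw} (summarized in Appendix~\ref{appx:FAFW}) shows that each halving of $w$ costs $O(LD^2/(m\delta^2))$ iterations once $w$ has dropped below $O(LD^2)$, with an $O(\log(w(\vx_0,\cs_0)/(LD^2)))$ warm-up phase. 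Summing over the $O(\log(w(\vx_0,\cs_0)/\epsilon))$ halvings required to reach $\epsilon$ yields the first term of the min.

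For the accelerated bound, I would decompose the execution into three phases. \emph{Burn-in} ($K_0$ iterations): by Theorem~\ref{theorem:no-more-drop-steps}, after $K_0$ AFW iterations we have $\vx^{\AFW} \in \cf(\vx^*)$ and $\vx^* \in \co(\cs^{\AFW})$, and by the second claim of Theorem~\ref{theorem:drop-step-if-not-in-optimal-face} this regime is preserved for all subsequent AFW iterations. \emph{Synchronization} ($K_1 = 128 LD^2/(m\delta^2)$ iterations): within $K_1$ additional AFW iterations, at least one PF-LaCG restart fires in which either the inner update $\vx^{\ACC} = \vx^{\AFW}$, $\cs^{\ACC} = \cs^{\AFW}$ on Line~\ref{Ln:PF-LaCG-ACC-update} triggers, or the alternative branch overwrites AFW with a shorter ACC active set; in either case the ACC active set ends up containing $\vx^*$. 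The length $K_1$ is dictated by the number of AFW iterations per halving of $w^{\AFW}$. \emph{Acceleration}: from this point on, ACC operates on $\cc = \co(\cs^{\ACC})$ with $\vx^* \in \cc$, so $\vx^* = \argmin_{\vx \in \cc}f(\vx)$ and Theorem~\ref{thm:ACC-full} applies directly. I would convert a gradient-mapping bound $\|G\| \leq \epsilon'$ to a primal-gap bound $f(\vyh_k) - f(\vx^*) = O((\epsilon')^2/m)$ via strong convexity on $\cc$, and then to a Wolfe-gap bound using $w(\vx, \cs) \leq LD\sqrt{2(f(\vx) - f(\vx^*))/m}$ from Theorem~\ref{theorem:no-more-drop-steps}. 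Solving for the required $\epsilon' = \Theta(m\epsilon/(LD))$, plugging into the complexity of Theorem~\ref{thm:ACC-full}, and accounting for the $O(\log(LD/\epsilon))$ outer PF-LaCG restarts produces the $\sqrt{L/m}\log(L/m)\log(LD/(m\delta))\log(LD/\epsilon)$ summand.

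The main obstacle is the synchronization phase: carefully analyzing the interleaved restart dynamics to prove that after AFW enters the good regime, the ACC active set becomes aligned with AFW's within $O(K_1)$ iterations, rather than ACC perpetually lagging behind with a suboptimal active set that does not contain $\vx^*$. One must verify that neither can ACC halve $w^{\ACC}$ indefinitely while missing $\vx^*$ from $\co(\cs^{\ACC})$ (this is where the $\delta$-scaling inequality and strict complementarity come back in, to bound $w^{\ACC}$ away from zero as long as $\vx^* \notin \co(\cs^{\ACC})$), nor can AFW's restart criterion be starved. A secondary subtlety is the parallel oracle accounting: since one ACC iteration runs in parallel with one AFW iteration, the total FOO/LMO count is the max over the two streams, not the sum, and this must be reflected in the final $K$. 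Finally, the conversions between gradient mapping, primal gap, and Wolfe gap need to be executed with the $\delta$-scaling inequality in mind to reconcile the $\log(LD/(m\delta))$ factor appearing in the accelerated bound with the ACC complexity of Theorem~\ref{thm:ACC-full}.
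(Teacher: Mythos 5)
Your proposal follows essentially the same route as the paper's proof: the same immediate termination argument, the same AFW-dominance argument for the first term of the min, the same burn-in/synchronization/acceleration decomposition via Theorem~\ref{theorem:no-more-drop-steps} and $K_1$, and the same conversion chain between gradient mapping, primal gap, and strong Wolfe gap (via strong convexity, smoothness, and the $\delta$-scaling inequality) to extract the accelerated term from Theorem~\ref{thm:ACC-full}. The one point to tighten is the accounting in the acceleration phase: rather than applying Theorem~\ref{thm:ACC-full} once with a target $\epsilon'$ and then multiplying by the number of restarts, the paper shows that each halving of $w^{\ACC}$ costs $r^\ast = O\left(\log\left(\frac{LD}{m\delta}\right)\right)$ calls to ACC (because progress is re-measured in $w$ at every restart, paying the gap-conversion loss each time), which is exactly where the product of logarithms, rather than a sum, arises.
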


\paragraph{Strong Wolfe Gap as an Upper Bound For Primal Gap.} Note that while in Theorem~\ref{thm:main} we show a convergence rate for $w (\vx, \cs)$, this translates to the same convergence rate in primal gap, as the aforementioned quantity is an upper bound on the strong Wolfe gap and hence an upper bound on the primal gap \cite{kerdreux2019restartfw}. Furthermore, we remark that $w (\vx, \cs) = 0$ if and only if $\vx = \vx^*$ when $\cs \subseteq \vertex(\cx)$ is a proper support of $\vx$ with respect to the polytope $\cx$. In the next section we illustrate our computational results with respect to the primal gap $f(\vx) - f(\vx^*)$, as this quantity is more common in the optimization literature.

\section{Computational Experiments}\label{sec:compResults}

We numerically demonstrate that PF-LaCG (Algorithm~\ref{algo:PF_LaCG}) when implemented in Python 3 outperforms other parameter-free CG methods in both iteration count and in wall-clock time. Most notably, we compare against the AFW, PFW, \emph{Decomposition-Invariant CG} (DICG) (in the case of the probability simplex, as this algorithm only applies to $0-1$ polytopes \cite{garber2016linear}) and the \emph{Lazy AFW} (AFW (Lazy)) \cite{pok17lazy} algorithms. As predicted by our theoretical results, the improvement is observed locally, once the iterates of the algorithm reach the optimal face. Further, we empirically observe on the considered examples that the active sets do not become too large, which is important for the accelerated algorithm to have an edge over standard CG updates in the overall wall-clock time. Our code can be found at~\url{https://github.com/ericlincc/Parameter-free-LaCG}.

Similar to~\citet{diakonikolas2019lacg}, we solve the minimization subproblems (projections onto the convex hull of the active sets) within ACC (Algorithm~\ref{algo:ACC}) using Nesterov's accelerated gradient descent \citep{nesterov2018introductory} with $\mathcal{O}\left( n\log n \right)$ projections onto the simplex described in \citet[Algorithm 1]{duchi2008efficient}. Even though in our analysis we consider coupling accelerated sequence with AFW, we note that PF-LaCG can be coupled with any CG variant that maintains an active set, such as standard AFW or PFW.

Since the execution of local acceleration within PF-LaCG is completely independent of the execution of the coupled CG variant, it is possible to run the locally accelerated algorithm in parallel on a separate core within one machine or even on a secondary machine, allowing us to utilize more computational power with the goal of solving large-scale problems to high accuracy in less computing time. In our experiments, we implemented the former approach where we run each process using one CPU core, and we run the accelerated algorithm of PF-LaCG on a separate process. This approach enables us to guarantee that PF-LaCG is not slower than the CG variant such as AFW and PFW barring negligible process creation overhead and inter-process communication while achieving much faster convergence once we have reached the optimal face.

\begin{figure*}[th!]
    \captionsetup[subfigure]{labelformat=empty}
    \centering
    \vspace{-10pt}
    \subfloat{{\includegraphics[width=4.85cm]{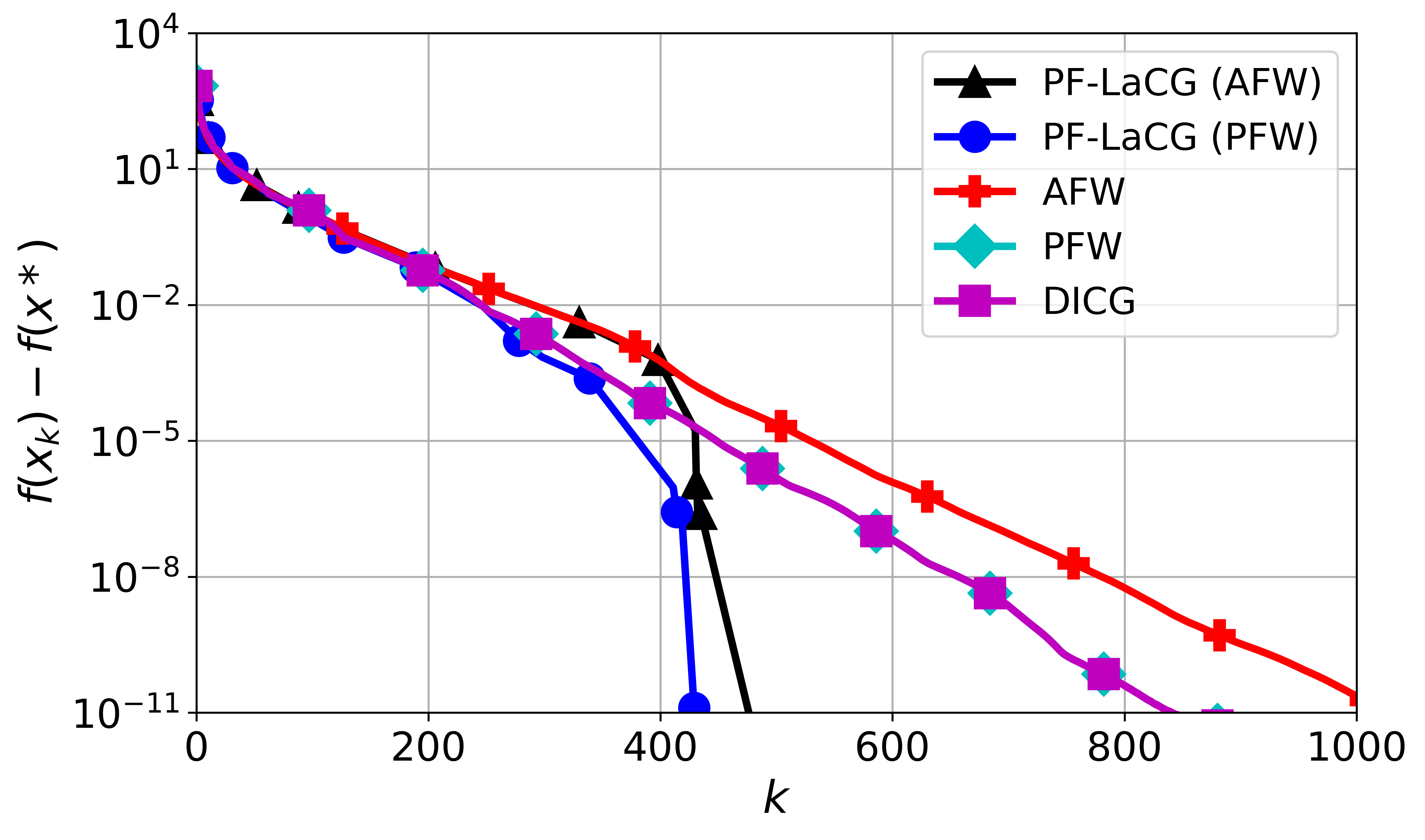}}\label{fig:simplex-PG-it}}%
    %\qquad
    \hspace{\fill}
    \subfloat{{\includegraphics[width=4.85cm]{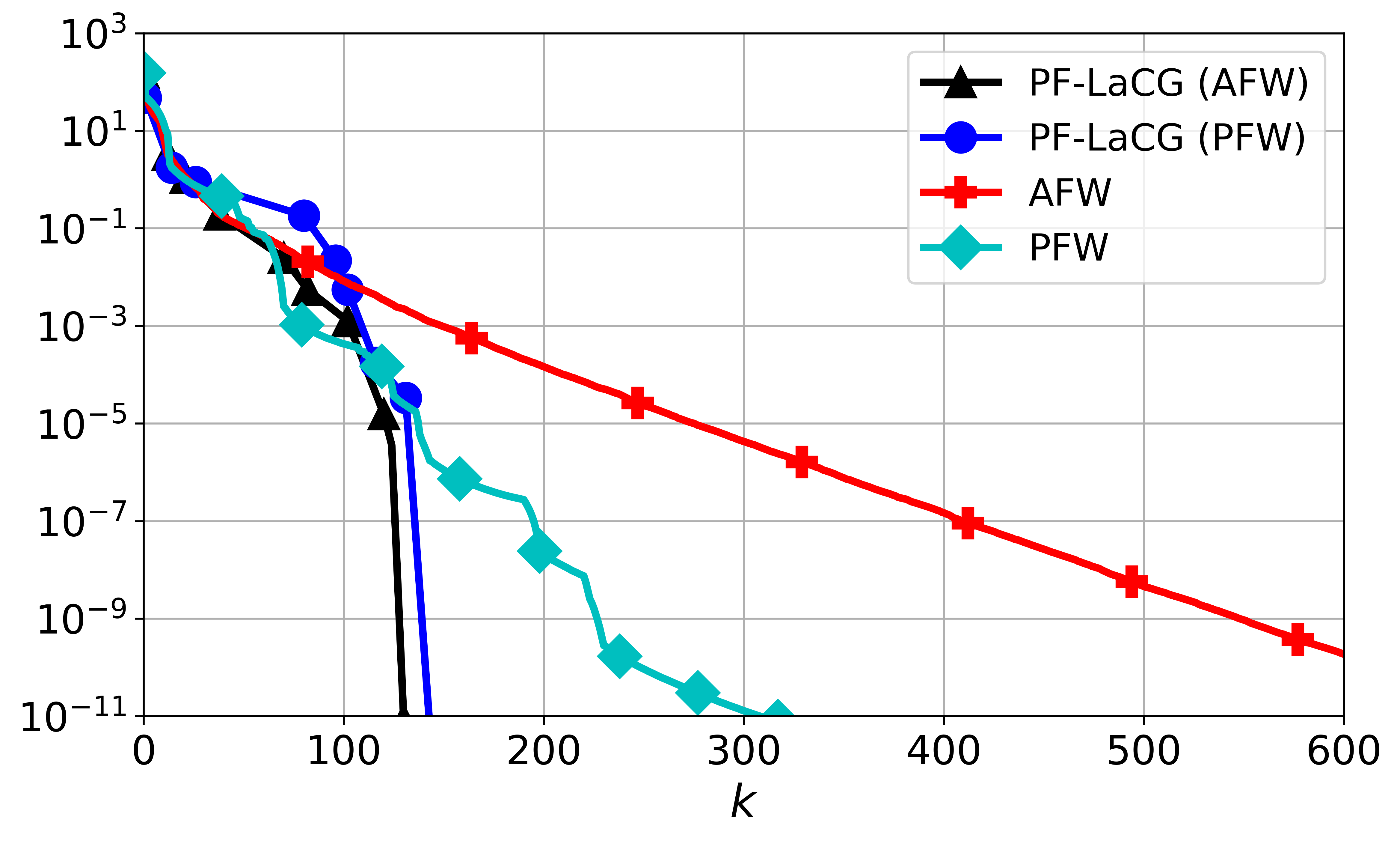} }\label{fig:lasso-PG-it}}%
    \hspace*{\fill}
    \subfloat{{\includegraphics[width=4.85cm]{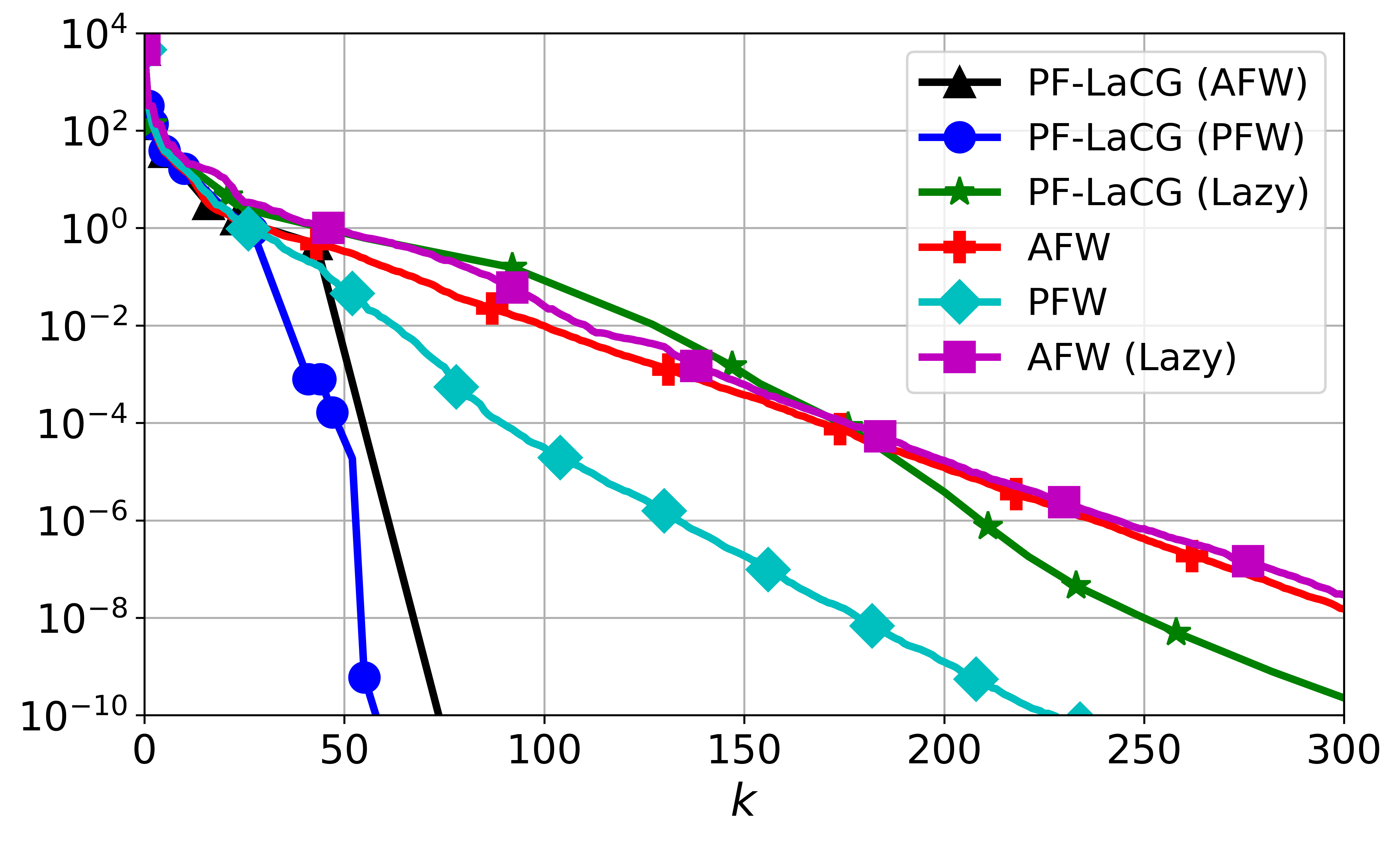} }\label{fig:matching-PG-it}}%
    \hspace*{\fill}

   \vspace{-10pt}
    \subfloat[Probability Simplex]{{\includegraphics[width=4.85cm]{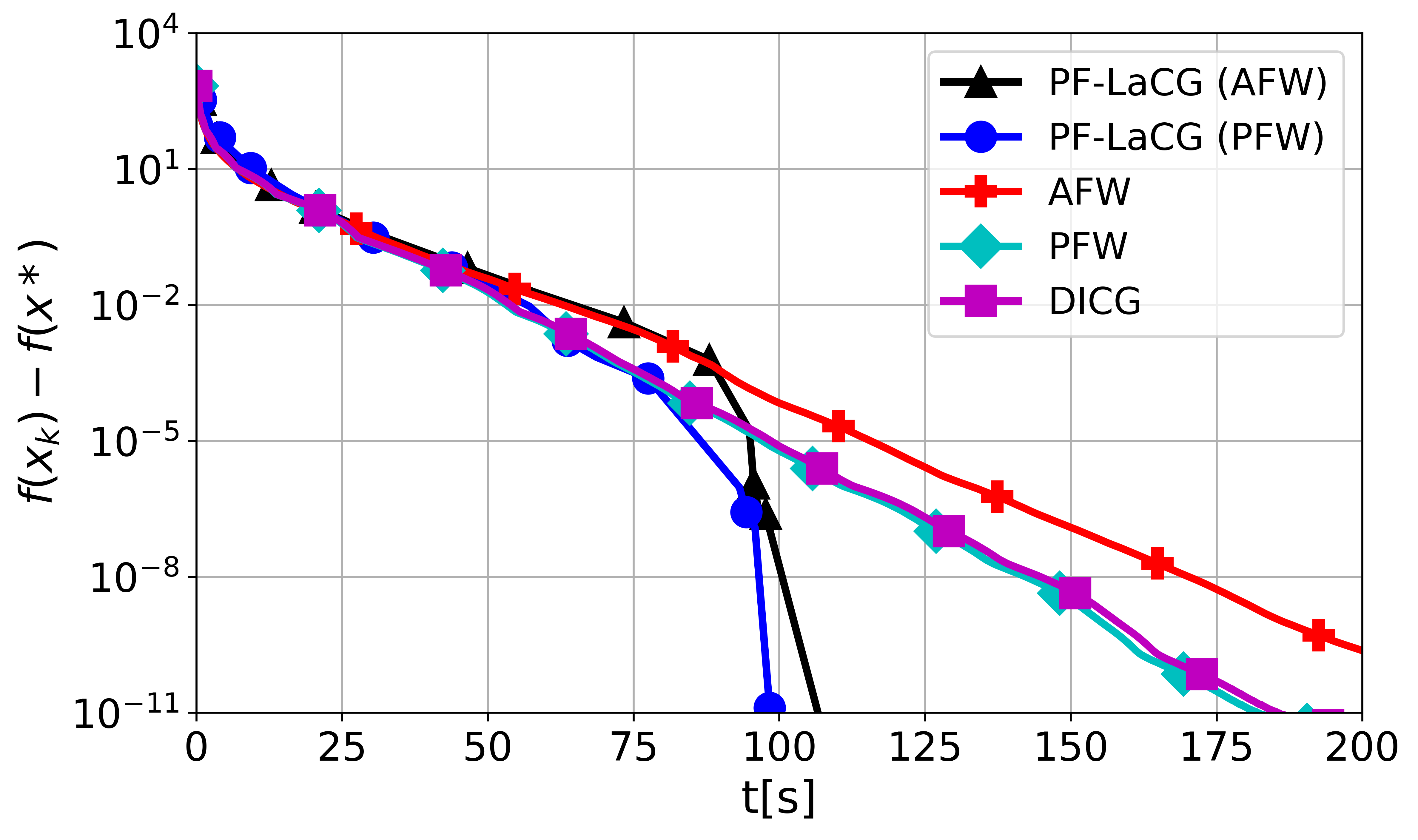} }\label{fig:simplex-PG-time}}%
    %\qquad
    \hspace{\fill}
    \subfloat[Structured LASSO]{{\includegraphics[width=4.85cm]{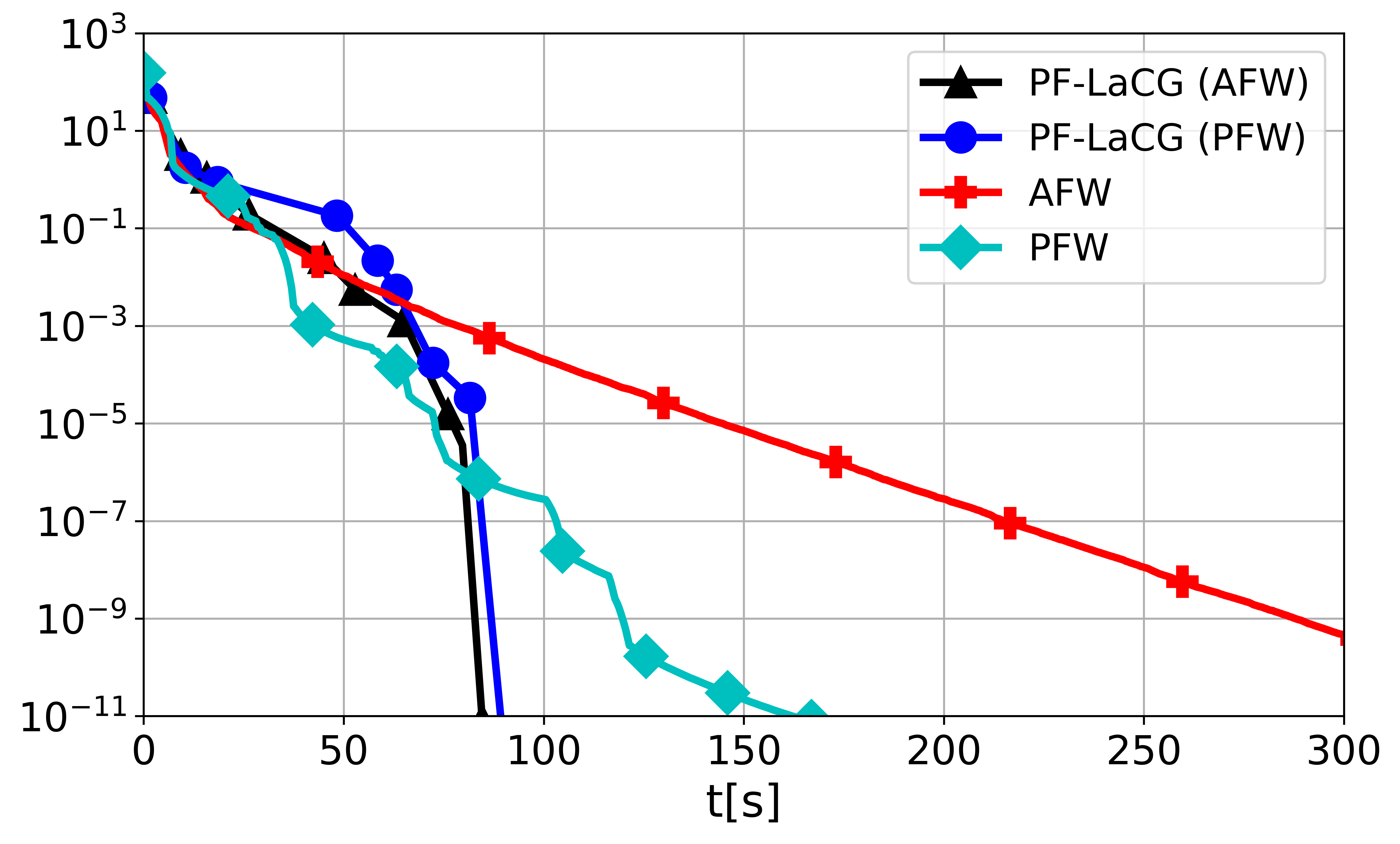} }\label{fig:lasso-PG-time}}%
    \hspace*{\fill}
    \subfloat[Constrained Birkhoff]{{\includegraphics[width=4.85cm]{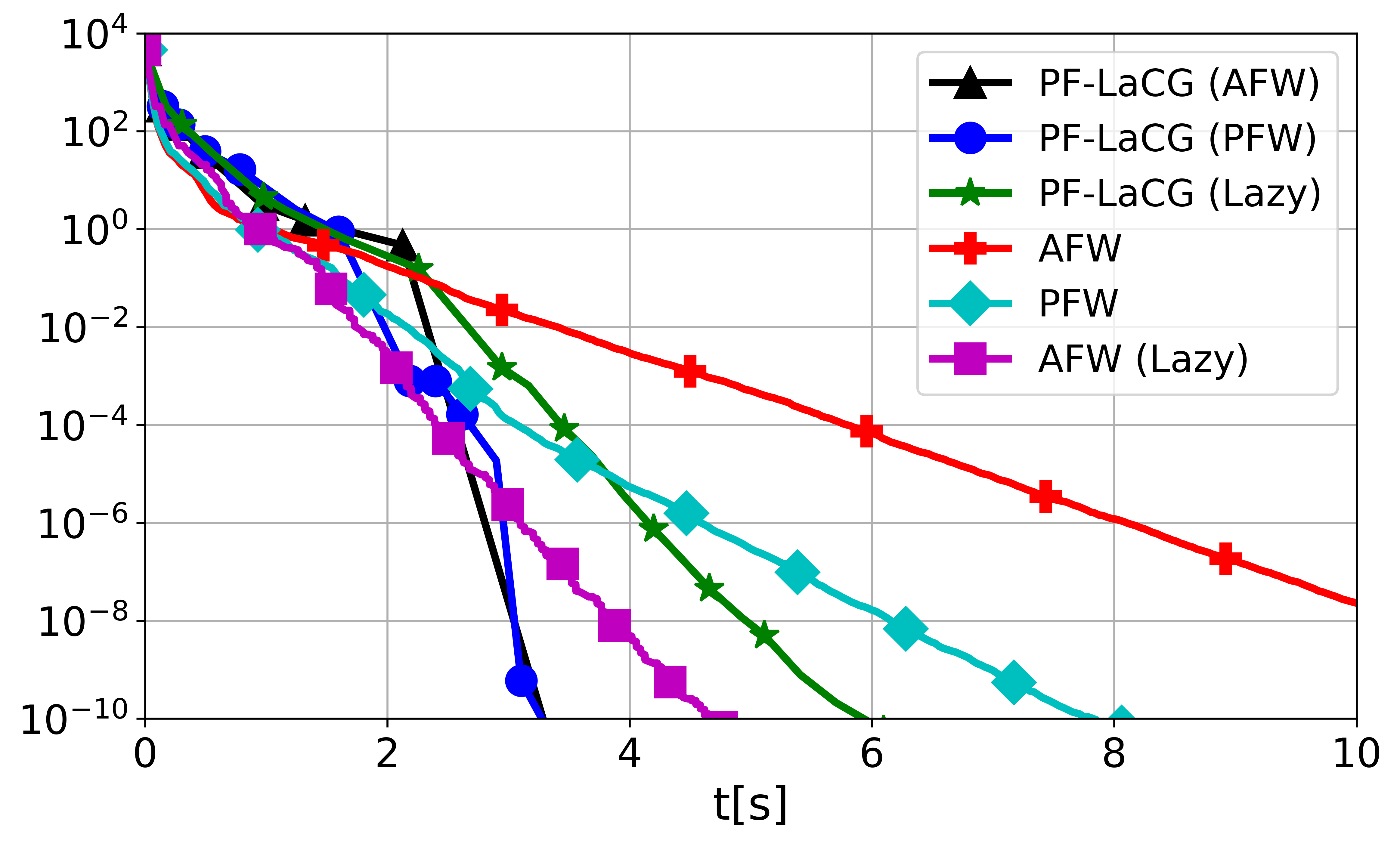} }\label{fig:matching-PG-time}}%
    \hspace*{\fill}
    % \vspace*{-5mm}
    \caption{\textbf{Numerical performance of PF-LaCG}: Top row depicts primal gap convergence in terms of iteration count, while bottom row depicts primal gap convergence in terms of time. Left-most column shows the results over the probability simplex, center column shows the results for the structured Lasso problem, and right-most column shows the results for the constrained Birkhoff polytope.}%
    \label{fig:PG-time}%
\end{figure*}

\paragraph{Probability Simplex.}
The unit probability simplex, although a toy example, can give us insight into the behaviour of PF-LaCG. We know that after a restart in which the AFW active set satisfies $\vx^* \in \co \left(\cs_k\right)$, we should expect to see accelerated convergence from the iterates computed by the ACC algorithm. Given the structure of the probability simplex, this is easy to check in the experiments once we have a high-accuracy solution to the minimization problem. The function being minimized in this example is $f(\vx) =  \vx^T \left( M^T M + \alpha \mathbf{1}_n \right)\vx/2 + \vb^T\vx$, where $M\in \rr^{n\times n}$ and $\vb\in \rr^{n}$ have entries sampled uniformly at random between $0$ and $1$ and $n = 10000$. The parameter $\alpha = 500$ is set so that the objective function satisfies $m \approx 500$. The resulting condition number is $L/m = 50000$, and the number of nonzero elements in $\vx^*$ is around $320$. The AFW algorithm in PF-LaCG (AFW) satisfies that $\vx^* \in \co \left( \cs_k \right)$ around iteration $400$, consequently we achieve the accelerated convergence rate from then onwards. The same can be said regarding PF-LaCG (PFW) around iteration $350$.

\paragraph{Structured LASSO Regression.}
The LASSO is an extremely popular sparse regression analysis method in statistics, and has become an important tool in ML. In many applications, we can impose additional structure on the solution being learnt through the addition of linear constraints; this is useful, for example, when learning sparse physics dynamics from data (see \citet{carderera2021cindy}), in which we can impose additional linear equality constraints on the problem to reflect the symmetries present in the physical problem. This allows us to learn dynamics that are consistent with the underlying physics, and which potentially generalize better when predicting on unseen data. The objective function is the same as in the last section, except we now have $n = 1000$, $\alpha = 100$, and we choose the elements of $\vb$ uniformly from $0$ to $100$. This results in a condition number of $L/m = 250000$. The feasible region is the intersection of the $\ell_1$ unit ball and a series of equality constraints. To generate the additional equality constraints, we sample $125$ pairs of distinct integers $(i,j)$ from $1\leq i,j \leq n$ without replacement, and we set $x_i = x_j$ for each pair, adding $125$ linear constraints. %to the problem.

\paragraph{Constrained Birkhoff Polytope.}
We also solve a matching with the same quadratic function as in the previous section with $\alpha = 1$, and where we have scaled the matrix $M^T M$ to have a maximum eigenvalue of $100000$. This results in an objective function that has a condition number of $L/m = 100000$. The matching problem is solved over the Birkhoff polytope with $n = 400$ where we have imposed additional linear constraints. We sample $80$ integers $i$ from $1\leq i \leq n$ without replacement, and we set $x_i = 0$ for the first $40$ integers (to represent that certain matchings are not possible), and $x_i \leq 0.5$ for the remaining $40$ integers to represent a maximum fractional matching. As in the LaCG algorithm \cite{diakonikolas2019lacg}, our approach is compatible with the lazification technique for AFW \citep{pok17lazy}. In this last example we couple our algorithm with the AFW (Lazy) algorithm, resulting in the PF-LaCG (Lazy) algorithm. Moreover, we also benchmark against the AFW (Lazy) algorithm for reference.

\section{Discussion}\label{sec:Conclusion}

We have introduced a 
novel projection-free PF-LaCG 
algorithm for minimizing smooth and strongly convex functions over polytopes. This algorithm is parameter-free and locally accelerated.  

In particular, we have shown that after a finite burn-in phase (independent of the target error $\epsilon$) PF-LaCG achieves a near-optimal accelerated 
convergence rate without knowledge of any of the problem parameters (such as the smoothness or strong convexity of the function). As mentioned in the introduction, global acceleration is generally not possible for CG-type methods. We have also demonstrated the improved locally accelerated convergence rate of PF-LaCG using numerical experiments. 
Some interesting questions for future research remain. For example, it is an interesting and practically relevant question whether local acceleration is possible for CG methods that are not active set-based, such as, e.g., DICG~\citep{garber2016linear}, which would possibly lead to even faster algorithms.

\section*{Acknowledgments}
\label{sec:acknowledgments}

This research was partially funded by NSF grant CCF-2007757, by the Office of the Vice Chancellor for Research and Graduate Education at the University of Wisconsin–Madison with funding from the Wisconsin Alumni Research Foundation, and by the Deutsche Forschungsgemeinschaft (DFG) through the DFG Cluster of Excellence MATH+ and the Research Campus Modal funded by the German Federal Ministry of Education and Research (fund numbers 05M14ZAM, 05M20ZBM).

%%%%%%%%%% %%%%%%%%%% %%%%%%%%%% %%%%%%%%%%

\bibliographystyle{icml2021}

\bibliography{references}

\begin{thebibliography}{44}
\providecommand{\natexlab}[1]{#1}
\providecommand{\url}[1]{\texttt{#1}}
\expandafter\ifx\csname urlstyle\endcsname\relax
  \providecommand{\doi}[1]{doi: #1}\else
  \providecommand{\doi}{doi: \begingroup \urlstyle{rm}\Url}\fi

\bibitem[Bashiri \& Zhang(2017)Bashiri and Zhang]{bashiri2017decomposition}
Bashiri, M.~A. and Zhang, X.
\newblock Decomposition-invariant conditional gradient for general polytopes
  with line search.
\newblock In \emph{NIPS}, pp.\  2690--2700, 2017.

\bibitem[Beck(2017)]{beck2017optimization}
Beck, A.
\newblock \emph{First-order methods in optimization}.
\newblock MOS-SIAM, 2017.

\bibitem[Beck \& Shtern(2017)Beck and Shtern]{beck2017linearly}
Beck, A. and Shtern, S.
\newblock Linearly convergent away-step conditional gradient for non-strongly
  convex functions.
\newblock \emph{Mathematical Programming}, 164\penalty0 (1-2):\penalty0 1--27,
  2017.

\bibitem[Braun et~al.(2017)Braun, Pokutta, and Zink]{pok17lazy}
Braun, G., Pokutta, S., and Zink, D.
\newblock Lazifying conditional gradient algorithms.
\newblock In \emph{Proc.~ICML'2017}, 2017.

\bibitem[Braun et~al.(2019)Braun, Pokutta, Tu, and Wright]{pok18bcg}
Braun, G., Pokutta, S., Tu, D., and Wright, S.
\newblock Blended conditional gradients: {T}he unconditioning of conditional
  gradients.
\newblock In \emph{Proc.~ICML'19}, 2019.

\bibitem[Carderera \& Pokutta(2020)Carderera and Pokutta]{carderera2020second}
Carderera, A. and Pokutta, S.
\newblock Second-order conditional gradient sliding.
\newblock \emph{arXiv preprint arXiv:2002.08907}, 2020.

\bibitem[Carderera et~al.(2021)Carderera, Pokutta, Sch{\"u}tte, and
  Weiser]{carderera2021cindy}
Carderera, A., Pokutta, S., Sch{\"u}tte, C., and Weiser, M.
\newblock {CINDy}: {C}onditional gradient-based identification of non-linear
  dynamics--noise-robust recovery.
\newblock \emph{arXiv preprint arXiv:2101.02630}, 2021.

\bibitem[Chen et~al.(1998)Chen, Donoho, and Saunders]{chen1998atomic}
Chen, S.~S., Donoho, D.~L., and Saunders, M.~A.
\newblock Atomic decomposition by basis pursuit.
\newblock \emph{SIAM Journal on Scientific Computing}, 20\penalty0
  (1):\penalty0 33--61, 1998.

\bibitem[Cohen et~al.(2018)Cohen, Diakonikolas, and
  Orecchia]{cohen2018acceleration}
Cohen, M.~B., Diakonikolas, J., and Orecchia, L.
\newblock On acceleration with noise-corrupted gradients.
\newblock In \emph{Proc.~ICML'18}, 2018.

\bibitem[Combettes \& Pokutta(2021)Combettes and
  Pokutta]{combettes2021complexity}
Combettes, C.~W. and Pokutta, S.
\newblock Complexity of linear minimization and projection on some sets.
\newblock \emph{arXiv preprint arXiv:2101.10040}, 2021.

\bibitem[Combettes et~al.(2020)Combettes, Spiegel, and
  Pokutta]{combettes20adasfw}
Combettes, C.~W., Spiegel, C., and Pokutta, S.
\newblock Projection-free adaptive gradients for large-scale optimization.
\newblock \emph{arXiv preprint arXiv:2009.14114}, 2020.

\bibitem[Condat(2016)]{condat2016fast}
Condat, L.
\newblock Fast projection onto the simplex and the $\ell_1$ ball.
\newblock \emph{Mathematical Programming}, 158\penalty0 (1):\penalty0 575--585,
  2016.

\bibitem[Diakonikolas \& Orecchia(2019)Diakonikolas and
  Orecchia]{diakonikolas2019approximate}
Diakonikolas, J. and Orecchia, L.
\newblock The approximate duality gap technique: {A} unified theory of
  first-order methods.
\newblock \emph{SIAM Journal on Optimization}, 29\penalty0 (1):\penalty0
  660--689, 2019.

\bibitem[Diakonikolas et~al.(2020)Diakonikolas, Carderera, and
  Pokutta]{diakonikolas2019lacg}
Diakonikolas, J., Carderera, A., and Pokutta, S.
\newblock Locally accelerated conditional gradients.
\newblock In \emph{Proc.~AISTATS'20}, 2020.

\bibitem[Duchi et~al.(2008)Duchi, Shalev-Shwartz, Singer, and
  Chandra]{duchi2008efficient}
Duchi, J., Shalev-Shwartz, S., Singer, Y., and Chandra, T.
\newblock Efficient projections onto the $\ell_1$-ball for learning in high
  dimensions.
\newblock In \emph{Proc.~NIPS'08}, 2008.

\bibitem[Dvurechensky et~al.(2020)Dvurechensky, Ostroukhov, Safin, Shtern, and
  Staudigl]{dvurechensky2020self}
Dvurechensky, P., Ostroukhov, P., Safin, K., Shtern, S., and Staudigl, M.
\newblock Self-concordant analysis of {Frank-Wolfe} algorithms.
\newblock In \emph{Proc.~ICML'20}, 2020.

\bibitem[Frank \& Wolfe(1956)Frank and Wolfe]{frank1956algorithm}
Frank, M. and Wolfe, P.
\newblock An algorithm for quadratic programming.
\newblock \emph{Naval research logistics quarterly}, 3\penalty0 (1-2):\penalty0
  95--110, 1956.

\bibitem[Garber(2016)]{garber2016faster}
Garber, D.
\newblock Faster projection-free convex optimization over the spectrahedron.
\newblock In \emph{Proc.~NIPS'16}, 2016.

\bibitem[Garber(2020)]{garber2020revisiting}
Garber, D.
\newblock Revisiting {Frank-Wolfe} for polytopes: {S}trict complementarity and
  sparsity.
\newblock In \emph{Proc. NeurIPS'20}, 2020.

\bibitem[Garber \& Meshi(2016)Garber and Meshi]{garber2016linear}
Garber, D. and Meshi, O.
\newblock Linear-memory and decomposition-invariant linearly convergent
  conditional gradient algorithm for structured polytopes.
\newblock In \emph{Proc.~NIPS'16}, 2016.

\bibitem[Gu{\'e}lat \& Marcotte(1986)Gu{\'e}lat and Marcotte]{guelat1986some}
Gu{\'e}lat, J. and Marcotte, P.
\newblock Some comments on {W}olfe's `away step'.
\newblock \emph{Mathematical Programming}, 35\penalty0 (1):\penalty0 110--119,
  1986.

\bibitem[Gutman \& Pe\~na(2018)Gutman and Pe\~na]{gutman2018condition}
Gutman, D.~H. and Pe\~na, J.~F.
\newblock The condition of a function relative to a polytope.
\newblock \emph{arXiv preprint arXiv:1802.00271}, 2018.

\bibitem[Hazan \& Luo(2016)Hazan and Luo]{hazan2016variance}
Hazan, E. and Luo, H.
\newblock Variance-reduced and projection-free stochastic optimization.
\newblock In \emph{Proc.~ICML'16}, 2016.

\bibitem[Held et~al.(1974)Held, Wolfe, and Crowder]{held1974validation}
Held, M., Wolfe, P., and Crowder, H.~P.
\newblock Validation of subgradient optimization.
\newblock \emph{Mathematical programming}, 6\penalty0 (1):\penalty0 62--88,
  1974.

\bibitem[Ito \& Fukuda(2019)Ito and Fukuda]{ito2019gradientmapping}
Ito, M. and Fukuda, M.
\newblock Nearly optimal first-order methods for convex optimization under
  gradient norm measure: {A}n adaptive regularization approach.
\newblock \emph{arXiv preprint arXiv:1912.12004}, 2019.

\bibitem[Jaggi(2013)]{jaggi2013revisiting}
Jaggi, M.
\newblock Revisiting {Frank-Wolfe}: {P}rojection-free sparse convex
  optimization.
\newblock In \emph{Proc.~ICML'13}, 2013.

\bibitem[Kerdreux et~al.(2019)Kerdreux, d'Aspremont, and
  Pokutta]{kerdreux2019restartfw}
Kerdreux, T., d'Aspremont, A., and Pokutta, S.
\newblock Restarting {F}rank-{W}olfe: {F}aster rates under {H}{\"o}lderian
  error bounds.
\newblock In \emph{Proc.~AISTATS'19}, 2019.

\bibitem[Kerdreux et~al.(2021)Kerdreux, d'Aspremont, and
  Pokutta]{UniformConvexFW_2020}
Kerdreux, T., d'Aspremont, A., and Pokutta, S.
\newblock Projection-free optimization on uniformly convex sets.
\newblock In \emph{Proc.~AISTATS'21}, 2021.

\bibitem[Lacoste-Julien \& Jaggi(2015)Lacoste-Julien and
  Jaggi]{lacoste2015global}
Lacoste-Julien, S. and Jaggi, M.
\newblock On the global linear convergence of {Frank-Wolfe} optimization
  variants.
\newblock In \emph{Proc.~NIPS'15}, 2015.

\bibitem[Lam et~al.(2015)Lam, Pitrou, and Seibert]{lam2015numba}
Lam, S.~K., Pitrou, A., and Seibert, S.
\newblock Numba: A {LLVM}-based {P}ython {JIT} compiler.
\newblock In \emph{Proceedings of the Second Workshop on the LLVM Compiler
  Infrastructure in HPC}, pp.\  1--6, 2015.

\bibitem[Lan(2013)]{lan2013complexity}
Lan, G.
\newblock The complexity of large-scale convex programming under a linear
  optimization oracle.
\newblock \emph{arXiv preprint arXiv:1309.5550}, 2013.

\bibitem[Lei et~al.(2019)Lei, Zhuo, Caramanis, Dhillon, and
  Dimakis]{lei2019primal}
Lei, Q., Zhuo, J., Caramanis, C., Dhillon, I.~S., and Dimakis, A.~G.
\newblock Primal-dual block generalized {F}rank-{W}olfe.
\newblock \emph{Proc.~NeurIPS'19}, 2019.

\bibitem[Levitin \& Polyak(1966)Levitin and Polyak]{levitin1966constrained}
Levitin, E.~S. and Polyak, B.~T.
\newblock Constrained minimization methods.
\newblock \emph{USSR Computational mathematics and mathematical physics},
  6\penalty0 (5):\penalty0 1--50, 1966.

\bibitem[N{\'e}giar et~al.(2020)N{\'e}giar, Dresdner, Tsai, El~Ghaoui,
  Locatello, Freund, and Pedregosa]{negiar2020stochastic}
N{\'e}giar, G., Dresdner, G., Tsai, A., El~Ghaoui, L., Locatello, F., Freund,
  R., and Pedregosa, F.
\newblock Stochastic {F}rank-{W}olfe for constrained finite-sum minimization.
\newblock In \emph{Proc.~ICML'20}, 2020.

\bibitem[Nesterov(2012)]{nesterov2012make}
Nesterov, Y.
\newblock How to make the gradients small.
\newblock \emph{Optima. Mathematical Optimization Society Newsletter},
  \penalty0 (88):\penalty0 10--11, 2012.

\bibitem[Nesterov(2013)]{nesterov2013methods}
Nesterov, Y.
\newblock Gradient methods for minimizing composite functions.
\newblock \emph{Mathematical Programming}, \penalty0 (140(1)):\penalty0
  125--161, 2013.

\bibitem[Nesterov(2018)]{nesterov2018introductory}
Nesterov, Y.
\newblock \emph{Lectures on Convex Optimization}.
\newblock Springer, 2018.

\bibitem[Pe\~na \& Rodr\'{i}guez(2019)Pe\~na and
  Rodr\'{i}guez]{pena2019polytope}
Pe\~na, J. and Rodr\'{i}guez, D.
\newblock Polytope conditioning and linear convergence of the {F}rank--{W}olfe
  algorithm.
\newblock \emph{Mathematics of Operations Research}, 44\penalty0 (1):\penalty0
  1--18, 2019.

\bibitem[Pedregosa et~al.(2020)Pedregosa, Negiar, Askari, and
  Jaggi]{pedregosa2018step}
Pedregosa, F., Negiar, G., Askari, A., and Jaggi, M.
\newblock Linearly convergent {F}rank--{W}olfe with backtracking line-search.
\newblock In \emph{Proc.~AISTATS'20}, 2020.

\bibitem[Roulet \& d'Aspremont(2020)Roulet and
  d'Aspremont]{roulet2020sharpness}
Roulet, V. and d'Aspremont, A.
\newblock Sharpness, restart, and acceleration.
\newblock \emph{SIAM Journal on Optimization}, 30\penalty0 (1):\penalty0
  262--289, 2020.

\bibitem[Tibshirani(1996)]{tibshirani1996regression}
Tibshirani, R.
\newblock Regression shrinkage and selection via the lasso.
\newblock \emph{Journal of the Royal Statistical Society: Series B
  (Methodological)}, 58\penalty0 (1):\penalty0 267--288, 1996.

\bibitem[Tsiligkaridis \& Roberts(2020)Tsiligkaridis and
  Roberts]{tsiligkaridis2020frank}
Tsiligkaridis, T. and Roberts, J.
\newblock On {Frank-Wolfe} optimization for adversarial robustness and
  interpretability.
\newblock \emph{arXiv preprint arXiv:2012.12368}, 2020.

\bibitem[Zhang et~al.(2020)Zhang, Shen, Mokhtari, Hassani, and
  Karbasi]{zhang2020one}
Zhang, M., Shen, Z., Mokhtari, A., Hassani, H., and Karbasi, A.
\newblock One sample stochastic {F}rank-{W}olfe.
\newblock In \emph{Proc.~AISTATS'20}, 2020.

\bibitem[Zhou et~al.(2018)Zhou, Gupta, and Udell]{zhou2018limited}
Zhou, S., Gupta, S., and Udell, M.
\newblock Limited memory {K}elley's method converges for composite convex and
  submodular objectives.
\newblock \emph{arXiv preprint arXiv:1807.07531}, 2018.

\end{thebibliography}

\newpage
\onecolumn
\appendix

{\centering{\LARGE\bfseries Parameter-free Locally Accelerated Conditional Gradients}

  \vspace{1em}
  \centering{{\LARGE\bfseries Appendix}}

}

\vspace{2em}

\paragraph{Outline.} The appendix of the paper is organized as follows:
\begin{itemize}[leftmargin=*]
  \item Section~\ref{appx:FAFW} presents the proofs related to the AFW algorithm used in the main body of the paper.
  \item Section~\ref{appx:section:acceralated-analysis} presents the proofs related to the accelerated algorithm with inexact gradient mappings that is used in this work, as well as come useful properties of the gradient mapping.
   \item Section~\ref{appx:section:PFLaCG} shows how we can couple the AFW algorithm and the accelerated algorithm presented in Section~\ref{appx:section:acceralated-analysis} to achieve a parameter-free accelerated CG variant, dubbed \emph{Parameter-free Locally Accelerated Conditional Gradients} (PF-LaCG).
  \item Section~\ref{appx:section:comp-results} presents the full details of the computational experiments performed in the paper.
\end{itemize}

\section{Away-Step Frank-Wolfe (AFW)}\label{appx:FAFW}

We use a modified version of the \emph{Away-Step Frank-Wolfe} (AFW) algorithm \cite{guelat1986some, lacoste2015global} that is run until the Frank-Wolfe gap is halved, in a similar manner as in \cite{kerdreux2019restartfw}. The only difference between the AFW algorithm presented in Algorithm~\ref{alg:fafw} and the one from \citep{kerdreux2019restartfw} is that %we particularize for the case where $e^{-\gamma} = 1/2$, and 
we substitute the condition that chooses between Frank-Wolfe steps (Line~\ref{op:fw-step}) and away-steps (Line~\ref{op:away-step}) for the condition used in the classical Away-Step Frank Wolfe (AFW) algorithm, shown in Line~\ref{alg:FAFW:step_type_selection}. In the following, we will say that a step is a \emph{full-progress step} if it is either a Frank-Wolfe Step (Line~\ref{op:fw-step}) or an away-step (Line~\ref{op:away-step}) that is not a drop step, i.e., when $\lambda_k < \alpha_k^{\vs_k} / (1 - \alpha_k^{\vs_k})$, where $\alpha_k^{\vs_k}$ is the barycentric coordinate of $\vs_k$ with respect to the current active set. Here we summarize the results that are utilized in our analysis. 

The following definition is introduced for completeness, to state the results from~\cite{kerdreux2019restartfw}.
\begin{definition}[Away curvature]
  \label{defn:away-curvature}
  Given a problem~\eqref{eq:problem}, the away curvature $C_f^A$ is defined as
  \begin{equation*}
    C_f^A := \sup_{\substack{\vx, \vs, \mathbf{v} \in \cx \\ \rho \in [0, 1] \\ \vy = \vx + \rho (\vs - \mathbf{v})}} \frac{2}{\rho^2} (f (\vy) - f (\vx) - \rho \inner{\nabla f (\vx)}{\vs - \mathbf{v}}).
  \end{equation*}
  As $f$ is $L$-smooth, we have $C_f^A \le L D^2$, where $D = \max_{\vx, \vy \in \cx}\|\vy - \vx\|$ denotes the diameter of $\cx$.  
\end{definition}

\begin{algorithm}[htb!]
  \caption{Away-Step Frank-Wolfe Algorithm: $\mathrm{AFW} (\vx_0, \cs_0, \epsilon^w)$}
  \begin{algorithmic}[1]
%    \Statex {\bf Input:} Starting point $x_0$ with proper support $\cs_0$, target accuracy $\epsilon^w$.
    \State $k := 0$
    \While {$w (\vx_k, \cs_k) > w (\vx_0, \cs_0) / 2$} \label{alg:exit_criterion}
      \State $\mathbf{v}_k := \argmin_{\vu \in \cx} \inner{\nabla f (\vx_k)}{\vu}$ and $\vd_k^{\mathrm{FW}} := \mathbf{v}_k - \vx_k$ \label{alg:FAFW:FWvertex}
      \State $\vs_k := \argmin_{\vu \in \cs_k} \inner{- \nabla f (\vx_k)}{\vu}$ with $\cs_k$ current active set and $\vd_k^{\mathrm{Away}} := \vx_k - \vs_k$ \label{alg:FAFW:awayvertex}
      %\If {$- \inner{\nabla f (\vx_k)}{\vd_k^{\mathrm{FW}}} > \frac{1}{4} w (\vx_0, \cs_0)$}
      \If {$- \inner{\nabla f (\vx_k)}{\vd_k^{\mathrm{FW}}} \geq -\inner{\nabla f (\vx_k)}{\vd_k^{\mathrm{Away}}}$} \label{alg:FAFW:step_type_selection}
        \State $\vd_k := \vd_k^{\mathrm{FW}}$ with $\lambda_{\mathrm{max}} := 1$  \label{op:fw-step}
      \Else
        \State $\vd_k := \vd_k^{\mathrm{Away}}$ with $\lambda_{\mathrm{max}} := \frac{\alpha_k^{\vs_k}}{1 - \alpha_k^{\vs_k}}$  \label{op:away-step}
      \EndIf
      \State $\vx_{k + 1} := \vx_k + \lambda_k \vd_k$ with $\lambda_k \in [0, \lambda_{\mathrm{max}}]$ via line-search \label{alg:FAFW:stepsize}
      \State Update active set $\cs_{k + 1}$ and coefficients $\{ \alpha_{k + 1}^{\mathbf{v}} \}_{\mathbf{v} \in \cs_{k + 1}}$
      \State $k := k + 1$
    \EndWhile
    \State \Return ($\vx_k, \cs_k, w (\vx_k, \cs_k)$) where $\vx_k \in \cx$ and $w (\vx_k, \cs_k) \le \epsilon^w$
  \end{algorithmic} \label{alg:fafw}
\end{algorithm}

\begin{fact} [{\citep[Lemma 3.6]{kerdreux2019restartfw}}]
  \label{lemma:optgap-le-strwolfegap}
  When $f$ is $m$-strongly convex over $\cx$ and $\cx$ is a polytope satisfying $\delta$-scaling inequality with $\delta > 0$, then we have for all $\vx \in \cx$:
  $$
    f(\vx) - \min_{\vy \in \cx}f(\vy) \le \frac{2 w(\vx)^2}{m \delta^2}. 
  $$
\end{fact}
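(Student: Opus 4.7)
The plan is to chain together three standard ingredients: the $\delta$-scaling inequality (Definition~\ref{defn:delta-scaling}) which lower bounds $w(\vx)$ by a directional-derivative-type quantity along $\vx - \vx^*$, the $m$-strong convexity of $f$ which lower bounds that same inner product by a combination of the primal gap and the squared distance to the optimum, and an AM-GM step (or, equivalently, a completion of squares) to collapse the resulting expression into a clean bound on the primal gap.

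Concretely, write $h := f(\vx) - f(\vx^*)$ and $d := \|\vx - \vx^*\|$, and assume $\vx \neq \vx^*$ (otherwise the claim is trivial). First I would apply the $\delta$-scaling inequality to obtain
\[
    w(\vx) \;\geq\; \delta(\cx) \cdot \frac{\innp{\nabla f(\vx), \vx - \vx^*}}{d}.
\]
Next, $m$-strong convexity of $f$ on $\cx$ gives
\[
    f(\vx^*) \;\geq\; f(\vx) + \innp{\nabla f(\vx), \vx^* - \vx} + \frac{m}{2}\,d^2,
\]
so after rearranging, $\innp{\nabla f(\vx), \vx - \vx^*} \geq h + \tfrac{m}{2} d^2$. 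Plugging this into the previous display yields
\[
    w(\vx) \;\geq\; \delta(\cx)\cdot \frac{h + \frac{m}{2}d^2}{d}.
\]

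Finally I would apply the AM-GM inequality $h + \tfrac{m}{2}d^2 \geq 2\sqrt{h\cdot \tfrac{m}{2}d^2} = d\sqrt{2mh}$, which leaves
\[
    w(\vx) \;\geq\; \delta(\cx)\sqrt{2m h}.
\]
Squaring and solving for $h$ gives $h \leq \tfrac{w(\vx)^2}{2m\delta^2}$, which is in fact slightly stronger than the stated bound $\tfrac{2w(\vx)^2}{m\delta^2}$ (the claim has a loose constant of $4$, which is harmless for its downstream use).

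There is no real obstacle; the only mild subtlety is that the scaling inequality is stated for $\vx \neq \vx^*$, so the boundary case $\vx = \vx^*$ must be treated separately (trivially, both sides are zero). Also, one should note that the $\delta$-scaling inequality in Definition~\ref{defn:delta-scaling} uses $w(\vx)$, which by Definition~\ref{defn:strong-wolfe-gap} equals $\min_{\cs \in \cs_{\vx}} w(\vx, \cs)$, so the same bound is inherited by any particular $w(\vx, \cs)$ with $\cs \in \cs_{\vx}$ since $w(\vx, \cs) \geq w(\vx)$.
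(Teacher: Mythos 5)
Your proof is correct. Note that the paper itself offers no proof of this statement: it is imported verbatim as a cited Fact from \citet[Lemma~3.6]{kerdreux2019restartfw}, so there is nothing internal to compare against. Your chain --- the $\delta$-scaling inequality of Definition~\ref{defn:delta-scaling}, the strong-convexity lower bound $\innp{\nabla f(\vx), \vx - \vx^*} \ge h + \tfrac{m}{2}d^2$, and AM--GM --- is the standard derivation, and it in fact yields the sharper constant $f(\vx) - f(\vx^*) \le \frac{w(\vx)^2}{2m\delta^2}$, which implies the stated bound with room to spare; your handling of the $\vx = \vx^*$ corner case and of the passage from $w(\vx)$ to $w(\vx,\cs)$ is also correct.
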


\begin{proposition} [Iterations needed to halve $w (\vx, \cs)$ {\citep[Proposition 4.1]{kerdreux2019restartfw}}]
  \label{prop:fafw-halve-rate}
  Let $f$ be an $L$-smooth, $m$-strongly convex function with away curvature $C_f^A$ and let $\cx$ be a polytope satisfying the $\delta$-scaling inequality with $\delta > 0$. Assume that $\vx_0 \in \cx$ is such that $w (\vx_0) / 4 \le C_f^A$. Then AFW (Algorithm~\ref{alg:fafw}) outputs an iterate $\vx_K \in \cx$ such that
  $$
    w (\vx_K, \cs_K) \le w (\vx_0, \cs_0) / 2
  $$
  after at most 
  $$
    K \le \abs{\cs_0} - \abs{\cs_K} + \frac{128 C_f^A}{m \delta^2}
  $$
  iterations, where $\cs_0$ and $\cs_K$ are the initial active set and the active set at iteration $K$ respectively.
  \begin{proof}
  Note that by Fact~\ref{lemma:optgap-le-strwolfegap} we have that the $2$-strong Wolfe primal bound holds. We only explain the part of the proof that differs from the proof in \cite{kerdreux2019restartfw}, due to the modified condition in Line~\ref{alg:FAFW:step_type_selection}. This condition only affects how we bound $w (\vx_0, \cs_0)$, depending on whether we take a Frank-Wolfe step or an away-step, as follows:
    \begin{align*}
    w (\vx_0, \cs_0) / 2 < w (\vx_k, \cs_k) = - \inner{\nabla f (\vx_k)}{\vd_k^{\mathrm{FW}}} -\inner{\nabla f (\vx_k)}{\vd_k^{\mathrm{Away}}} \leq -2 \inner{\nabla f (\vx_k)}{\vd_k},
    \end{align*}
    where the first inequality simply states that we have not yet halved  $w (\vx, \cs)$ (see Line~\ref{alg:exit_criterion}), and the second inequality stems from the condition with which we choose the steps in Line~\ref{alg:FAFW:step_type_selection}. This allows us to claim that in case a Frank-Wolfe step is chosen we have that $ w (\vx_0, \cs_0) / 4 <  \inner{\nabla f (\vx_k)}{\vd_k}$. Using this fact, we can proceed with the same arguments as in the proof of~\citet[Proposition 4.1]{kerdreux2019restartfw}, which is omitted.
  \end{proof}
\end{proposition}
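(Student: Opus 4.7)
The plan is to follow the template of \citet[Proposition 4.1]{kerdreux2019restartfw}, because the high-level structure of that argument (smoothness-based per-iteration progress, drop-step accounting, and the primal/strong-Wolfe-gap conversion via Fact~\ref{lemma:optgap-le-strwolfegap}) is insensitive to how Line~\ref{alg:FAFW:step_type_selection} breaks ties between FW and away directions. The only piece that has to be rederived is the per-iteration lower bound on $-\inner{\nabla f(\vx_k)}{\vd_k}$ in terms of the current strong Wolfe gap, so my proposal is: re-establish this single inequality for the max-based selection rule used here, and then reuse the remainder of \cite{kerdreux2019restartfw} verbatim.

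For that inequality, I would start from the fact that the while-loop condition has not yet triggered, i.e.\ $w(\vx_k,\cs_k) > w(\vx_0,\cs_0)/2$. By the definitions in Lines~\ref{alg:FAFW:FWvertex}--\ref{alg:FAFW:awayvertex}, the identity $w(\vx_k,\cs_k) = -\inner{\nabla f(\vx_k)}{\vd_k^{\mathrm{FW}}} - \inner{\nabla f(\vx_k)}{\vd_k^{\mathrm{Away}}}$ holds, and Line~\ref{alg:FAFW:step_type_selection} selects whichever of the two summands is larger; hence $-\inner{\nabla f(\vx_k)}{\vd_k} \ge w(\vx_k,\cs_k)/2 > w(\vx_0,\cs_0)/4$. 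This is the only place in the proof where the choice-of-direction branching actually enters, and it plays the role that a slightly different inequality plays in \cite{kerdreux2019restartfw}.

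The rest is then routine. Each full-progress step (an FW step or a non-drop away step) decreases the primal gap by at least $(-\inner{\nabla f(\vx_k)}{\vd_k})^2/(2 C_f^A) \ge w(\vx_0,\cs_0)^2/(32 C_f^A)$ via the usual line-search plus away-curvature argument; the hypothesis $w(\vx_0)/4 \le C_f^A$ is exactly what ensures that the unconstrained minimizer of the quadratic upper model lies within the $\lambda_{\max}$ interval, so no full-progress step gets truncated in a way that weakens this bound. Combining with Fact~\ref{lemma:optgap-le-strwolfegap}, which gives $f(\vx_0) - f(\vx^*) \le 2 w(\vx_0,\cs_0)^2/(m\delta^2)$, the number of full-progress steps before the exit condition fires is at most $64 C_f^A/(m\delta^2)$. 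The standard drop-step accounting (each drop step shrinks $|\cs_k|$ by one, only FW steps can grow it, and drop steps cannot outnumber full-progress steps by more than $|\cs_0| - |\cs_K|$) then yields $K \le 2 \cdot 64 C_f^A/(m\delta^2) + |\cs_0| - |\cs_K|$, matching the claim.

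The main obstacle is really a bookkeeping one: isolating cleanly where in the Kerdreux et al.\ argument the step-selection rule is used, and making sure that the replacement inequality $-\inner{\nabla f(\vx_k)}{\vd_k} \ge w(\vx_k,\cs_k)/2$ plugs in at exactly the same spot without disturbing either the smoothness-based progress bound or the drop-step accounting (where a subtle miscount of what qualifies as ``progress'' can shift the constants by more than the factor of two we can afford).
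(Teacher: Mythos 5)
Your proposal is correct and takes essentially the same approach as the paper: both isolate the single place where the modified step-selection rule enters, establish $-\inner{\nabla f(\vx_k)}{\vd_k} \ge w(\vx_k,\cs_k)/2 > w(\vx_0,\cs_0)/4$ from the max-based branching in Line~\ref{alg:FAFW:step_type_selection}, and defer the remaining smoothness-progress and drop-step accounting to \citet[Proposition 4.1]{kerdreux2019restartfw}. You spell out more of that deferred bookkeeping (and your constants check out against the stated bound), but the argument is the same.
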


Notice that Proposition~\ref{prop:fafw-halve-rate} requires an assumption that $w (\vx_0) / 4 \le C_f^A$. This assumption is satisfied after a small number of iterations, as summarized in the following proposition. 

\begin{proposition} [{\citep[Proposition 4.2]{kerdreux2019restartfw}}]
  \label{prop:fafw-burn-in}
  Assume the AFW algorithm (Algorithm~\ref{alg:fafw}) is run repeatedly until it outputs a point $\vx \in \cx$ such that $w (\vx, \cs) / 4 \le C_f^A$ where $\cs$ is a proper support for $\vx$. This will happen after at most
  $$
    T_0 = \frac{16}{\log 2} \log \frac{w (\vx_0, \cs_0)}{2 C_f^A} + \abs{\cs_0}
  $$
  FOO and LMO calls.
\end{proposition}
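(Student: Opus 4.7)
My plan is to bound the number of full-progress iterations per invocation of Algorithm~\ref{alg:fafw} by a constant, account separately for drop steps, and sum over the invocations required to leave the burn-in regime $w > 4 C_f^A$. Since one invocation halves $w(\vx, \cs)$ via the exit test in Line~\ref{alg:exit_criterion}, at most $N = \lceil \log_2(w(\vx_0, \cs_0)/(2 C_f^A)) \rceil$ invocations suffice to reach $w \le 2 C_f^A < 4 C_f^A$.

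For the per-iteration progress bound, I would combine three ingredients: (i) the modified step-selection rule in Line~\ref{alg:FAFW:step_type_selection}, which picks the direction with the larger of $-\inner{\nabla f(\vx_k)}{\vd_k^{\mathrm{FW}}}$ and $-\inner{\nabla f(\vx_k)}{\vd_k^{\mathrm{Away}}}$ and therefore, using that their sum equals $w(\vx_k, \cs_k)$ by Definition~\ref{defn:strong-wolfe-gap}, guarantees $-\inner{\nabla f(\vx_k)}{\vd_k} \ge w(\vx_k, \cs_k)/2$; (ii) the quadratic upper model $f(\vx_k + \lambda \vd_k) \le f(\vx_k) + \lambda\inner{\nabla f(\vx_k)}{\vd_k} + \tfrac{\lambda^2}{2} C_f^A$ implied by Definition~\ref{defn:away-curvature}; and (iii) the exact line-search in Line~\ref{alg:FAFW:stepsize}. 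A short case split on whether the unconstrained minimizer $-\inner{\nabla f(\vx_k)}{\vd_k}/C_f^A$ lies in $[0, \lambda_{\max}]$ yields, for every full-progress step,
\[
f(\vx_k) - f(\vx_{k+1}) \ \ge\ \min\!\Big\{\tfrac{w(\vx_k, \cs_k)^2}{8 C_f^A},\ \tfrac{1}{4}\, w(\vx_k, \cs_k)\Big\}.
\]
Inside the burn-in ($w > 4 C_f^A$) both branches are at least $w(\vx_k, \cs_k)/4$, and while an invocation has not yet exited we also have $w(\vx_k, \cs_k) > w(\vx_0, \cs_0)/2$, so each full-progress iteration inside the invocation reduces the primal gap by at least $w(\vx_0, \cs_0)/8$.

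Since $f(\vx_0) - f(\vx^*) \le w(\vx_0, \cs_0)$ at the start of the invocation (by Definition~\ref{defn:strong-wolfe-gap}) and drop steps do not increase $f$ (they are line-search steps), each invocation contains at most $F \le 8$ full-progress iterations; a ninth would drive the primal gap negative. Standard bookkeeping handles the drop steps: each drop step removes one atom, atoms are added only by full-progress FW steps, so across all invocations the total number of drop steps is at most $|\cs_0|$ plus the total number of full-progress steps. Summing over the $N$ burn-in invocations, the total iteration count is at most
\[
T \ \le\ 2 \cdot 8 \cdot N + |\cs_0| \ =\ \tfrac{16}{\log 2}\, \log \tfrac{w(\vx_0, \cs_0)}{2 C_f^A} + |\cs_0|,
\]
matching the claimed $T_0$ and, since each iteration uses one FOO and one LMO call, also matching the stated query complexity.

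The main obstacle is the contradiction argument for $F \le 8$: it rests on the conservative lower bound $-\inner{\nabla f(\vx_k)}{\vd_k} \ge w(\vx_k, \cs_k)/2$ from the modified step selection, together with the careful case split underlying the quadratic decrease, and critically on the fact that the exit test compares $w$ to its \emph{initial} value in the current invocation rather than to the current primal gap, so the lower bound on per-step primal decrease remains uniformly $w(\vx_0, \cs_0)/8$ throughout the invocation. The rest of the argument is routine bookkeeping.
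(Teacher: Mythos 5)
The paper does not actually prove this proposition; it explicitly omits the proof and defers to \citet[Proposition~4.2]{kerdreux2019restartfw}, so there is no in-paper argument to compare against. Your reconstruction follows the same standard route as that reference --- at most $8$ full-progress steps per halving epoch (each decreasing $f$ by at least $w(\vx_0,\cs_0)/8$ against an initial primal gap of at most $w(\vx_0,\cs_0)$), drop steps charged to vertex additions plus $\abs{\cs_0}$, and $\log_2\left(w(\vx_0,\cs_0)/(2C_f^A)\right)$ epochs --- and it is correct up to the usual ceiling rounding. The only step you leave implicit is why every full-progress \emph{away} step lands in the ``good'' branch of the line-search case split (i.e., why $w(\vx_k,\cs_k)^2/(8C_f^A)$ applies rather than the weak bound $\lambda_{\max}\cdot w(\vx_k,\cs_k)/4$ with small $\lambda_{\max}$): one must invoke the standard observation that when the quadratic model's minimizer exceeds $\lambda_{\max}$, the directional derivative stays negative on $[0,\lambda_{\max}]$, so exact line search returns $\lambda_{\max}$ and the step is a drop step, not a full-progress step.
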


Since $w (\vx) \le w (\vx, \cs)$ for all $\vx \in \cx$ and any support $\cs$ for $\vx$, Proposition~\ref{prop:fafw-burn-in} implies that Proposition~\ref{prop:fafw-halve-rate} applies after at most $T_0$ initial iterations. The proof is omitted and can instead be found in~\citet{kerdreux2019restartfw}. 

%%%%%%%%%%%%%%%%%%%%%%%%%%%%%%%%%%
\subsection{Implications of Strict Complementarity} \label{appx:section:auxiliary-results}

We reproduce here for completeness the relevant results from~\citet[Theorem 1]{garber2020revisiting}, where it is shown that if the iterates are not in $\mathcal{F}\left( \vx^*\right)$ and the primal gap is below a given tolerance, then all subsequent steps will be drop steps that drop vertices in the current active set that are not in $\mathcal{F}\left( \vx^*\right)$.

\begin{theorem}  \label{appx:theorem:drop-step-if-not-in-optimal-face}
  If the strict complementarity assumption is satisfied (Assumption~\ref{assumption:strictComplementarity}) and the primal gap satisfies $ f(\vx_k) -f(\vx^*) < 1/2 \min\left\{  ( \tau /(2D(\sqrt{L/m} + 1)))^2/L, \tau, LD^2 \right\} $ then the following holds for the AFW algorithm (Algorithm~\ref{alg:fafw}):
  \begin{enumerate}
      \item If $\vx_k \notin \mathcal{F}\left(\vx^* \right)$, AFW will perform an away step that drops a vertex $\vs_k \in \vertex \left( \cx \right) \setminus \mathcal{F}\left(\vx^* \right)$.
      \item If $\vx_k \in \mathcal{F}\left(\vx^* \right)$, AFW will either perform a Frank-Wolfe step with a vertex $\mathbf{v}_k \in \vertex \left( \mathcal{F}\left(\vx^* \right) \right)$ or an away-step with a vertex $\vs_k \in \vertex \left( \mathcal{F}\left(\vx^* \right) \right)$. Regardless of which step is chosen, the iterate will satisfy:
      \begin{align*}
          w (\vx_k, \cs_k) \leq \frac{LD\sqrt{2}}{\sqrt{m}}\sqrt{f\left(\vx_k\right) - f\left(\vx^*\right)}.
      \end{align*}
  \end{enumerate}
  \begin{proof}
  We first prove the first claim. Consider two vertices $\mathbf{v} \in \vertex \left( \cx \right) \cap \mathcal{F}\left(\vx^* \right)$ and $\vs \in \vertex \left( \cx \right) \setminus \mathcal{F}\left(\vx^* \right)$, then we have that:
  \begin{align}
      \inner{\vs -\mathbf{v}}{\nabla f(\vx_k)} &= \inner{\vs -\vx^*}{\nabla f(\vx^*)} + \inner{\vx^* -\mathbf{v}}{\nabla f(\vx^*)} + \inner{\vs -\mathbf{v}}{\nabla f(\vx_k) - \nabla f(\vx^*)}\notag \\
       &\geq \tau - \norm{\vs -\mathbf{v}}\norm{\nabla f(\vx_k) - \nabla f(\vx^*)}\notag \\
       &\geq \tau - LD\norm{\vx_k - \vx^*}\notag \\
       &\geq \tau - LD\sqrt{2\left( f(\vx_k) -f(\vx^*) \right)/m}\notag \\
       &\geq \tau /2, \label{appx:eq:FW_vertex_choice}
  \end{align}
  where the first inequality comes from the strict complementarity assumption, as $\mathbf{v} \in \vertex \left( \cx \right) \cap \mathcal{F}\left(\vx^* \right)$ and $\vs \in \vertex \left( \cx \right) \setminus \mathcal{F}\left(\vx^* \right)$, and the Cauchy-Schwarz inequality, the second from $L$-smoothness and the fact that $\norm{\vs -\mathbf{v}} \leq D$, and the third inequality from $m$-strong convexity. Note that the last inequality comes from the fact that we assume that the primal gap satisfies $f(\vx_k) -f(\vx^*) < ( \tau /(2D (\sqrt{L/m} + 1)))^2/(2L) \leq \left( \tau /(2LD)\right)^2m/2$. This allows us to claim that $\inner{\vs}{\nabla f(\vx_k)} > \inner{\mathbf{v}}{\nabla f(\vx_k)}$ for any $\mathbf{v} \in \vertex \left( \cx \right) \cap \mathcal{F}\left(\vx^* \right)$ and $\vs \in \vertex \left( \cx \right) \setminus \mathcal{F}\left(\vx^* \right)$, which means that the Frank-Wolfe vertex in Line~\ref{alg:FAFW:FWvertex} satisfies that $\mathbf{v}_k \in \vertex \left( \cx \right) \cap \mathcal{F}\left(\vx^* \right)$. Alternatively, if we have that $\vx_k \notin \mathcal{F}\left( \vx^*\right)$ then $\cs_k \setminus \mathcal{F}\left(\vx^* \right)$ is nonempty, which also means that the away-vertex chosen in Line~\ref{alg:FAFW:awayvertex} will be such that $\vs_k \in \cs_k \setminus \mathcal{F}\left(\vx^* \right)$. The proof proceeds by showing that the AFW algorithm will chose to perform an away-step in Line~\ref{alg:FAFW:step_type_selection} of Algorithm~\ref{alg:fafw}, as opposed to a Frank-Wolfe step. Let $\vs \in \cs_k \setminus \mathcal{F}\left(\vx^* \right)$, using arguments that are similar to the ones in the previous chain of inequalities, then:
  \begin{align}
      \inner{\vs - \vx_k}{\nabla f(\vx_k)} & = \inner{\vs - \vx^*}{\nabla f(\vx^*)} + \inner{\vs - \vx^*}{\nabla f(\vx_k) - \nabla f(\vx^*)} +\inner{\vx^* - \vx_k}{\nabla f(\vx_k)} \notag \\
    & \geq \tau - \norm{\vs - \vx^*}\norm{\nabla f(\vx_k) - \nabla f(\vx^*)} - \max_{\mathbf{v} \in \cx}\inner{\vx_k - \mathbf{v}}{\nabla f(\vx_k)}\notag \\
    & \geq \tau -LD \norm{\vx_k - \vx^*} - \max_{\mathbf{v} \in \cx}\inner{\vx_k - \mathbf{v}}{\nabla f(\vx_k)} \notag\\
   &\geq \tau - LD\sqrt{2\left( f(\vx_k) -f(\vx^*) \right)/m} - D \sqrt{2L (f(\vx_k) - f(\vx^*))}\notag\\
   &= \tau - D\sqrt{2L\left( f(\vx_k) -f(\vx^*) \right)} \left(\sqrt{L/m} + 1 \right) \notag\\
   &\geq \tau/2 , \label{appx:bound:away_gap}
  \end{align}
  where the first inequality stems from the strict complementarity assumption, which applies to $\vs \in \vertex \left( \cx \right) \setminus \mathcal{F}\left(\vx^* \right)$, the Cauchy-Schwarz inequality, and the fact that $\inner{\vx_k - \vx^*}{\nabla f(\vx_k)} \leq \max_{\mathbf{v} \in \cx}\inner{\vx_k - \mathbf{v}}{\nabla f(\vx_k)}$. The second inequality uses the fact that $\norm{\vs - \vx^*} \leq D$ and $L$-smoothness. Note that from Theorem 2 in \citet{lacoste2015global}, we know that since the function is $L$-smooth then if we have that $f(\vx_k) - f(\vx^*)< LD^2/2$, then we also have that $\max_{\mathbf{v} \in \cx} \inner{\vx_k - \mathbf{v}}{\nabla f(\vx_k)} \leq D \sqrt{2L (f(\vx_k) - f(\vx^*))}$. The third inequality uses the aforementioned bound, along with $m$-strong convexity for the term that contains $\norm{\vx_k - \vx^*}$. The last inequality again uses the assumption that the primal gap satisfies $f(\vx_k) -f(\vx^*) < ( \tau /(D (\sqrt{L/m} + 1)))^2/(8L)$.  Moving on to the bound on the Frank-Wolfe gap, using the bound on Theorem 2 in \citet{lacoste2015global} and our assumption on the primal gap allows us to conclude that:
  \begin{align}
      \max_{\mathbf{v} \in \cx} \inner{\vx_k - \mathbf{v}}{\nabla f(\vx_k)} \leq D \sqrt{2L (f(\vx_k) - f(\vx^*))} \leq \frac{\tau}{2 (\sqrt{L/m} + 1)} \leq \frac{\tau}{2}, \label{appx:bound:FW_gap}
  \end{align}
  where the last inequality simply stems from $L/m \geq 1$. Putting together Eqs.~\eqref{appx:bound:away_gap} and \eqref{appx:bound:FW_gap} we can conclude that for any $\vs \in \cs_k \setminus \mathcal{F}\left(\vx^* \right)$ we have that $\inner{\vs - \vx_k}{\nabla f(\vx_k)} \geq \max_{\mathbf{v} \in \cx} \inner{\vx_k - \mathbf{v}}{\nabla f(\vx_k)}$. Consequently, we have that $\max_{\vs \in \cs_k \setminus \mathcal{F}\left(\vx^* \right)}\inner{\vs - \vx_k}{\nabla f(\vx_k)} \geq \max_{\mathbf{v} \in \cx} \inner{\vx_k - \mathbf{v}}{\nabla f(\vx_k)}$. This means that the AFW algorithm will perform an away step, moving away from a vertex $\vs_k \in \cs_k \setminus \mathcal{F}\left(\vx^* \right)$. The last step of the proof proceeds to show that the step size chosen in the away-step corresponds to the largest possible step size $\lambda_{\mathrm{max}}$. To do this, assume for contradiction that this is not the case, and assume that $\lambda_k = \argmin_{\lambda \in [0, \lambda_{\mathrm{max}}]}f(\vx + \lambda (\vx_k - \vs_k)) <\lambda_{\mathrm{max}}$, by the optimality of the line search we must have that $\inner{\vs_k - \vx_k}{\nabla f(\vx_{k+1})} = 0$. On the other hand, we can write:
  \begin{align}
      \inner{\vs_k - \vx_k}{\nabla f(\vx_{k+1})} & = \inner{\vs_k - \vx^*}{\nabla f(\vx^*)} +\inner{\vx^* - \vx_k}{\nabla f(\vx^*)}  + \inner{\vs_k - \vx_k}{\nabla f(\vx_{k+1})- \nabla f(\vx^*)}, \notag \\
      & \geq \tau -\left( f(\vx_k) - f(\vx^*)\right) + \inner{\vs_k - \vx_k}{\nabla f(\vx_{k+1})- \nabla f(\vx^*)} \label{appx:initial_equation}\\
      & > \tau/2 - \norm{\vs_k - \vx_k}\norm{\nabla f(\vx_{k+1})- \nabla f(\vx^*)} \label{appx:eq:bound-decomposition}\\
      & \geq \tau/2 - LD\norm{ \vx_{k+1}-  \vx^*}\notag \\
      & \geq \tau/2 - LD\sqrt{ 2(f(\vx_{k}) - f(\vx^*))/m}\notag \\
     & > 0,\notag
  \end{align}
  where the inequality in Eq.~\eqref{appx:initial_equation} follows from the strict complementarity assumption, which means that $ \inner{\vs_k - \vx^*}{\nabla f(\vx^*)} \geq \tau$, and from convexity, which implies that $\inner{\vx^* - \vx_k}{\nabla f(\vx^*)} \geq -\left( f(\vx_k) - f(\vx^*)\right)$. The following inequality, shown in Eq.~\eqref{appx:eq:bound-decomposition}, follows from the Cauchy-Schwarz inequality and from the bound on the primal gap $f(\vx_k) - f(\vx^*) < \tau/2$. The following inequalities follow from the application of the $L$-smoothness and $m$-strong convexity of the objective function and the bound on the primal gap $f(\vx_k) -f(\vx^*) < ( \tau /(2D (\sqrt{L/m} + 1)))^2/(2L) \leq \left( \tau /(2LD)\right)^2m/2$. This proves the desired contradiction, and so we must have that $\lambda_k = \lambda_{\mathrm{max}}$. This means that at iteration $k$ we have performed an away step that has dropped a vertex $\vs_k \in \cs_k \setminus \mathcal{F}\left( \vx^*\right)$. This proves the first claim. 
  
  The first part of the second claim follows by noting that if $\vx_k \in \mathcal{F}(\vx^*)$, then the away-vertex chosen in Line~\ref{alg:FAFW:awayvertex} will satisfy $\vs_k \in \mathcal{F}(\vx^*)$, as $\cs_k \subseteq \mathcal{F}(\vx^*)$. Moreover by Eq.~\eqref{appx:eq:FW_vertex_choice} we know that the Frank-Wolfe vertex chosen in Line~\ref{alg:FAFW:FWvertex} will satisfy $\mathbf{v}_k \in \mathcal{F}(\vx^*)$. This proves the first part of the second claim. Moving on to the second part of the second claim. If we denote the active set of $\vx_k$ at iteration $k$ by $\cs_k$ it follows that $\cs_k \subseteq \mathcal{F}\left( \vx^* \right)$ from the fact that $\vx_k \in \mathcal{F}\left( \vx^* \right)$ (as otherwise we would have $\vx_k \notin \mathcal{F}\left( \vx^* \right)$). Moreover from the first part of the second claim we have that $\mathbf{v}_k = \argmin_{\vu\in \cx} \inner{\nabla f (\vx_k)}{ \vu} \in \mathcal{F}\left( \vx^*\right)$ and $\vs_k = \argmin_{\vu\in \mathcal{S}_k} \inner{-\nabla f (\vx_k)}{ \vu} \in \mathcal{F}\left( \vx^*\right)$. From the definition of $w(\vx_k, \cs_k)$ we have that:
\begin{align*}
    w (\vx_k, \cs_k) & =  \inner{\vs_k -\mathbf{v}_k}{\nabla f(\vx_k)} \\
    & =  \inner{\vs_k -\vx^*}{\nabla f(\vx^*)} + \inner{\vx^* -\mathbf{v}_k}{\nabla f(\vx^*)} + \inner{\vs_k -\mathbf{v}_k}{\nabla f(\vx_k) - \nabla f(\vx^*)} \\
    & = \inner{\vs_k -\mathbf{v}_k}{\nabla f(\vx_k) - \nabla f(\vx^*)} \\
    & \leq \norm{\vs_k -\mathbf{v}_k}\norm{\nabla f(\vx_k) - \nabla f(\vx^*)} \\
    & \leq LD\norm{\vx_k - \vx^*} \\
    & \leq \frac{LD\sqrt{2}}{\sqrt{m}}\sqrt{f\left(\vx_k\right) - f\left(\vx^*\right)}.
\end{align*}
Where the third equality follows from the strict complementarity assumption (Assumption~\ref{assumption:strictComplementarity}), and the fact that as $\vs_k, \mathbf{v}_k \in \mathcal{F}\left( \vx^*\right)$ we have that $\inner{\vx^* -\mathbf{v}_k}{\nabla f(\vx^*)} = 0$ and $\inner{\vs_k -\vx^*}{\nabla f(\vx^*)} = 0$. The first inequality follows from the Cauchy-Schwartz inequality, the second from the $L$-smoothness of $f(x)$ and $\norm{\vs_k -\mathbf{v}_k} \leq D$, and the last one from the $m$-strong convexity of the $f(x)$. This completes the proof.
\end{proof}
\end{theorem}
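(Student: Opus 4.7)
The plan is to exploit strict complementarity (Assumption~\ref{assumption:strictComplementarity}) to create a clean separation, with respect to $\innp{\nabla f(\vx^*), \cdot}$, between vertices of $\cx$ that lie in $\mathcal{F}(\vx^*)$ and those that do not, and then to \emph{transfer} this separation from $\nabla f(\vx^*)$ to $\nabla f(\vx_k)$ using $L$-smoothness together with the primal-gap hypothesis (which, via $m$-strong convexity, yields $\norm{\vx_k-\vx^*}\le\sqrt{2(f(\vx_k)-f(\vx^*))/m}$). The three terms in the $\min\{\cdot\}$ define three thresholds that are each tailored to make one step of the argument go through, so the bookkeeping is really a matter of making sure each perturbation of the form $LD\norm{\vx_k-\vx^*}$ or $D\sqrt{2L(f(\vx_k)-f(\vx^*))}$ is below $\tau/2$.

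\textbf{Claim 1 ($\vx_k\notin\mathcal{F}(\vx^*)$).} First I would pick any $\mathbf{v}\in\vertex(\mathcal{F}(\vx^*))$ and any $\vs\in\vertex(\cx)\setminus\mathcal{F}(\vx^*)$, decompose $\innp{\nabla f(\vx_k)}{\vs-\mathbf{v}}=\innp{\nabla f(\vx^*)}{\vs-\mathbf{v}}+\innp{\nabla f(\vx_k)-\nabla f(\vx^*)}{\vs-\mathbf{v}}$, lower bound the first term by $\tau$ via strict complementarity, and control the second by Cauchy--Schwarz and $L$-smoothness. The primal-gap threshold is calibrated so this difference is at least $\tau/2>0$, which forces the FW-vertex to lie in $\mathcal{F}(\vx^*)$ and (since $\cs_k\setminus\mathcal{F}(\vx^*)\ne\emptyset$) the away-vertex to lie in $\cs_k\setminus\mathcal{F}(\vx^*)$. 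A similar decomposition of $\innp{\nabla f(\vx_k)}{\vs-\vx_k}$ together with the bound $\max_{\mathbf{v}\in\cx}\innp{\nabla f(\vx_k)}{\vx_k-\mathbf{v}}\le D\sqrt{2L(f(\vx_k)-f(\vx^*))}$ from~\citet[Theorem 2]{lacoste2015global} shows the away direction beats the FW direction in Line~\ref{alg:FAFW:step_type_selection} of Algorithm~\ref{alg:fafw}. Finally, to show the away-step is a drop step, I assume $\lambda_k<\lambda_{\max}$ for contradiction so that the line-search optimality condition yields $\innp{\nabla f(\vx_{k+1})}{\vs_k-\vx_k}=0$; but decomposing again through $\vx^*$ and using strict complementarity, convexity (which gives $\innp{\nabla f(\vx^*)}{\vx^*-\vx_k}\ge -(f(\vx_k)-f(\vx^*))$), and $L$-smoothness plus $m$-strong convexity to bound $\norm{\vx_{k+1}-\vx^*}\le\sqrt{2(f(\vx_k)-f(\vx^*))/m}$ (line-search is monotone), the same inner product is strictly positive.

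\textbf{Claim 2 ($\vx_k\in\mathcal{F}(\vx^*)$).} Here $\cs_k\subseteq\mathcal{F}(\vx^*)$ so $\vs_k\in\mathcal{F}(\vx^*)$ automatically, and the argument from Claim 1 comparing $\innp{\nabla f(\vx_k)}{\vs-\mathbf{v}}$ for $\vs\notin\mathcal{F}(\vx^*)$ and $\mathbf{v}\in\mathcal{F}(\vx^*)$ still identifies $\mathbf{v}_k\in\mathcal{F}(\vx^*)$. For the Wolfe-gap bound, the key observation is that both $\innp{\nabla f(\vx^*)}{\vs_k-\vx^*}$ and $\innp{\nabla f(\vx^*)}{\vx^*-\mathbf{v}_k}$ vanish (the former by strict complementarity, the latter by optimality of $\vx^*$ in the relative interior of its face), leaving $w(\vx_k,\cs_k)=\innp{\nabla f(\vx_k)-\nabla f(\vx^*)}{\vs_k-\mathbf{v}_k}$; Cauchy--Schwarz, $L$-smoothness, $\norm{\vs_k-\mathbf{v}_k}\le D$, and $m$-strong convexity finish the bound.

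\textbf{Expected obstacle.} The step I expect to be most delicate is showing the line-search step size saturates at $\lambda_{\max}$, because one must control $\norm{\vx_{k+1}-\vx^*}$ rather than $\norm{\vx_k-\vx^*}$ after an away step (monotonicity of $f$ along the line search rescues us, but one has to be careful), and it is this step that most tightly determines which of the three thresholds in the $\min$ is binding; getting the exact constant $\tau/(2D(\sqrt{L/m}+1))^2/L$ is then a matter of matching the two perturbation terms $LD\norm{\vx_k-\vx^*}$ and $D\sqrt{2L(f(\vx_k)-f(\vx^*))}$ against $\tau/2$.
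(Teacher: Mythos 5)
Your proposal is correct and follows essentially the same route as the paper's proof: the same decomposition of inner products through $\vx^*$ with strict complementarity supplying the $\tau$ gap, the same use of \citet[Theorem 2]{lacoste2015global} to bound the Frank--Wolfe gap and force the away step, the same contradiction via line-search optimality to show the step saturates at $\lambda_{\max}$ (including the observation that monotonicity of the line search lets you bound $\norm{\vx_{k+1}-\vx^*}$ by $\sqrt{2(f(\vx_k)-f(\vx^*))/m}$), and the same cancellation of the $\nabla f(\vx^*)$ terms in the Wolfe-gap bound. You have also correctly identified the delicate step.
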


\begin{corollary}  \label{appx:theorem:drop-step-if-not-in-optimal-face-PFW}
  If the strict complementarity assumption is satisfied (Assumption~\ref{assumption:strictComplementarity}) and the primal gap satisfies $ f(\vx_k) -f(\vx^*) < 1/2 \min\left\{ ( \tau /(2D(\sqrt{L/m} + 1)))^2/L, \tau, LD^2 \right\} $ then the claims in Theorem~\ref{appx:theorem:drop-step-if-not-in-optimal-face} hold for the \emph{Pairwise-Step Frank-Wolfe} (PFW) algorithm, the natural extension of the AFW algorithm to the use of \emph{pairswise-steps}.
  \begin{proof}
  The proof uses many of the same techniques and concepts as Theorem~\ref{appx:theorem:drop-step-if-not-in-optimal-face}, so we only give a brief proof-sketch here. Note that chain of inequalities in Eq.~\eqref{appx:eq:FW_vertex_choice} is independent of the algorithm being used. Therefore if the primal gap satisfies $f(\vx_k) -f(\vx^*) < ( \tau /(2D (\sqrt{L/m} + 1)))^2/(2L) \leq \left( \tau /(2LD)\right)^2m/2$, then we can claim that $\inner{\vs}{\nabla f(\vx_k)} > \inner{\mathbf{v}}{\nabla f(\vx_k)}$ for any $\mathbf{v} \in \vertex \left( \cx \right) \cap \mathcal{F}\left(\vx^* \right)$ and $\vs \in \cs_k \setminus \mathcal{F}\left(\vx^* \right)$, which means that the Frank-Wolfe vertex in the PFW algorithm satisfies that $\mathbf{v}_k \in \vertex \left( \cx \right) \cap \mathcal{F}\left(\vx^* \right)$. Alternatively, this also means that the away-vertex chosen in the PFW algorithm will be such that $\vs_k \in \cs_k \setminus \mathcal{F}\left(\vx^* \right)$. Note that the directions towards which the iterates move in the PFW algorithm are given by $\vs_k - \vx_k$, with a maximum step size of $\lambda_{\max} = \alpha_k^{\vs_k}$. If  the maximum step size is chosen this means that the vertex $\vs_k \in \cs_k \setminus \mathcal{F}\left(\vx^* \right)$ has been dropped, and so to proceed we will show that this maximum step size is chosen. To do this, assume for contradiction that this is not the case, and assume that $\lambda_k = \argmin_{\lambda \in [0, \lambda_{\mathrm{max}}]}f(\vx + \lambda (\vx_k - \vs_k)) <\lambda_{\mathrm{max}}$, by the optimality of the line search we must have that $\inner{\vs_k - \vx_k}{\nabla f(\vx_{k+1})} = 0$. On the other hand, we can write:
  \begin{align}
      \inner{\vs_k - \mathbf{v}_k}{\nabla f(\vx_{k+1})} & = \inner{\vs_k - \vx^*}{\nabla f(\vx^*)} +\inner{\vx^* - \mathbf{v}_k}{\nabla f(\vx^*)}  + \inner{\vs_k - \mathbf{v}_k}{\nabla f(\vx_{k+1})- \nabla f(\vx^*)}, \notag \\
      & > \tau - \norm{\vs_k - \mathbf{v}_k}\norm{\nabla f(\vx_{k+1})- \nabla f(\vx^*)} \label{appx:eq:bound-decomposition-pfw}\\
      & \geq \tau - LD\norm{ \vx_{k+1}-  \vx^*}\notag \\
      & \geq \tau - LD\sqrt{ 2(f(\vx_{k}) - f(\vx^*))/m}\notag \\
     & > 0,\notag
  \end{align}
  where the inequality in Eq.~\eqref{appx:eq:bound-decomposition-pfw} follows from the Cauchy-Schwarz inequality and the strict complementarity assumption for $\vs_k$ and $\mathbf{v}_k$, which means that $ \inner{\vs_k - \vx^*}{\nabla f(\vx^*)} \geq \tau$ and $ \inner{\mathbf{v}_k - \vx^*}{\nabla f(\vx^*)} =0 $. The following inequalities follow from the application of the $L$-smoothness and $m$-strong convexity of the objective function and the bound on the primal gap $f(\vx_k) -f(\vx^*) < ( \tau /(2D (\sqrt{L/m} + 1)))^2/(2L) \leq \left( \tau /(2LD)\right)^2m/2$. This proves the desired contradiction, and so we must have that $\lambda_k = \lambda_{\mathrm{max}}$. This means that at iteration $k$ we have dropped a vertex $\vs_k \in \cs_k \setminus \mathcal{F}\left( \vx^*\right)$ from the active set $\cs_k$. This proves the first claim. The proof of the second claim and the bound on $w(\vx_k , \cs_k)$ can be repeated word for word from Theorem~\ref{appx:theorem:drop-step-if-not-in-optimal-face}.
\end{proof}
\end{corollary}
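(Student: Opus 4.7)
The plan is to mirror the structure of the proof of Theorem~\ref{appx:theorem:drop-step-if-not-in-optimal-face} and adapt each step to the pairwise-step setting, where the single update direction is $\vd_k = \mathbf{v}_k - \vs_k$ with $\lambda_{\max} = \alpha_k^{\vs_k}$. Since the rules for choosing $\mathbf{v}_k$ (linear minimization over $\cx$) and $\vs_k$ (maximization of $\innp{\nabla f(\vx_k), \cdot}$ over $\cs_k$) are identical in AFW and PFW, a large portion of the argument should port directly. The only real structural difference between AFW and PFW is that PFW does not branch between a FW and an away step, so the "step type selection" portion of the AFW proof becomes vacuous; the gain is that the subsequent step-size argument must be re-derived for the pairwise direction.

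First, I would reuse the chain of inequalities in Eq.~\eqref{appx:eq:FW_vertex_choice} verbatim: the bound on the primal gap ensures that for any $\mathbf{v}\in \vertex(\cx)\cap \mathcal{F}(\vx^*)$ and any $\vs\in \vertex(\cx)\setminus \mathcal{F}(\vx^*)$ we have $\innp{\nabla f(\vx_k),\vs-\mathbf{v}}\ge \tau/2>0$, because this inequality only uses Assumption~\ref{assumption:strictComplementarity}, Cauchy-Schwarz, $L$-smoothness, and $m$-strong convexity (none of which refers to a particular CG variant). Consequently $\mathbf{v}_k \in \mathcal{F}(\vx^*)$, and whenever $\vx_k\notin \mathcal{F}(\vx^*)$ the active set $\cs_k$ intersects $\vertex(\cx)\setminus\mathcal{F}(\vx^*)$, so the maximization that defines $\vs_k$ selects a vertex outside $\mathcal{F}(\vx^*)$.

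Next, to prove the drop-step property, I would show that the line search returns $\lambda_k = \lambda_{\max}$, since $\lambda_k = \lambda_{\max}$ is exactly the definition of a drop step in PFW: it removes $\vs_k$ from the active set. I would argue by contradiction, assuming $\lambda_k < \lambda_{\max}$; by optimality of the line search then $\innp{\nabla f(\vx_{k+1}),\mathbf{v}_k-\vs_k}=0$. To derive a contradiction I would decompose
\begin{align*}
\innp{\nabla f(\vx_{k+1}),\mathbf{v}_k-\vs_k}
 &= \innp{\nabla f(\vx^*),\mathbf{v}_k-\vx^*} + \innp{\nabla f(\vx^*),\vx^*-\vs_k}\\
 &\quad + \innp{\nabla f(\vx_{k+1})-\nabla f(\vx^*),\mathbf{v}_k-\vs_k}.
\end{align*}
The first term vanishes by strict complementarity (and $\mathbf{v}_k \in \mathcal{F}(\vx^*)$), the second is $\le -\tau$ by Assumption~\ref{assumption:strictComplementarity} applied to $\vs_k \notin \mathcal{F}(\vx^*)$, and the third is bounded in absolute value by $LD\,\norm{\vx_{k+1}-\vx^*}\le LD\sqrt{2(f(\vx_k)-f(\vx^*))/m}$ via Cauchy-Schwarz, $L$-smoothness, $m$-strong convexity, and the fact that the line search is monotone. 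The small-primal-gap hypothesis makes this last term strictly smaller than $\tau/2$ in magnitude, forcing the whole expression to be strictly negative (or, up to a sign choice in the decomposition, strictly positive), which contradicts its nullity. Note that this is somewhat cleaner than the AFW version because both $\mathbf{v}_k$ and $\vs_k$ enjoy a strict complementarity bound, so no convexity detour through $\innp{\nabla f(\vx_k),\vx^*-\vx_k}$ is needed.

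Finally, I would handle the second claim (when $\vx_k\in \mathcal{F}(\vx^*)$) by observing that the bound $w(\vx_k,\cs_k)\le LD\sqrt{2(f(\vx_k)-f(\vx^*))/m}$ is entirely about the vertices chosen (the usual FW and away vertices), not the step taken. Since $\cs_k\subseteq \mathcal{F}(\vx^*)$ when $\vx_k\in \mathcal{F}(\vx^*)$, and since $\mathbf{v}_k\in \mathcal{F}(\vx^*)$ by the vertex-selection argument above, the identical telescoping of $\innp{\nabla f(\vx_k)-\nabla f(\vx^*),\vs_k-\mathbf{v}_k}$ used at the end of Theorem~\ref{appx:theorem:drop-step-if-not-in-optimal-face} gives the same conclusion verbatim. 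The only piece that genuinely required new work is the step-size contradiction, which I expect to be the main obstacle; once that is in place, the rest of the corollary is essentially bookkeeping.
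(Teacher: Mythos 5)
Your proposal is correct and follows essentially the same route as the paper's proof: reuse the vertex-selection inequality from Eq.~\eqref{appx:eq:FW_vertex_choice} unchanged, then establish the drop step by contradiction via the decomposition of $\innp{\nabla f(\vx_{k+1}),\mathbf{v}_k-\vs_k}$ through $\nabla f(\vx^*)$ (the paper writes the same chain with the opposite sign, for $\innp{\vs_k-\mathbf{v}_k}{\nabla f(\vx_{k+1})}>0$), and inherit the second claim verbatim from Theorem~\ref{appx:theorem:drop-step-if-not-in-optimal-face}. Your observation that both $\mathbf{v}_k$ and $\vs_k$ admit strict-complementarity bounds, so no convexity detour through $\innp{\vx^*-\vx_k}{\nabla f(\vx_k)}$ is needed, is exactly the simplification the paper exploits.
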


Using Theorem~\ref{appx:theorem:drop-step-if-not-in-optimal-face} (or Corollary~\ref{appx:theorem:drop-step-if-not-in-optimal-face-PFW} for the Pairwise-Step Frank-Wolfe (PFW)), and assuming that $\vx_0 \in \vertex \left( \cx\right)$, and using the primal gap convergence gap guarantee in \citet[Theorem 1]{lacoste2015global}, we can bound the number of iterations until $ f(\vx_k) -f(\vx^*)$ satisfies the requirement in Theorem~\ref{appx:theorem:drop-step-if-not-in-optimal-face}. Using this bound, and the fact that the AFW algorithm can pick up at most one vertex per iteration, we can bound the number of iterations until $\vx_k \in \mathcal{F}(\vx^*)$ (there exist other bounds for the PFW algorithm). Note that by the second claim in Theorem~\ref{theorem:drop-step-if-not-in-optimal-face}, this means that when $\vx_k \in \mathcal{F}(\vx^*)$, then the iterates will not leave $\mathcal{F}(\vx^*)$. Furthermore, once the iterates are inside the optimal face, there are two options: if $\vx^* = \mathcal{F}(\vx^*)$, then the AFW algorithm will exit once $\vx_k \in \mathcal{F}(\vx^*)$, as $w(\vx_k, \cs_k) = 0$, otherwise if $\vx^* \notin \vertex \left( \cx\right)$ (the case of interest in our setting, by Assumption~\ref{assmpt:x^*-sufficiently-deep-in-X}), then we need to prove that after a given number of iterations the active set will satisfy $\vx^* \in \co \left( \cs_k\right)$. We prove the former using Fact~\ref{fact:appx:asConvergence} (a variation of \citet[Fact B.3]{diakonikolas2019lacg}).

\begin{fact}[Critical strong Wolfe gap]
  \label{fact:appx:asConvergence}
  There exists an $w_c > 0$ such that for any subset $\cs \subseteq \vertex(\mathcal{F}(\vx^*))$ and point $\vx \in \mathcal{F}(\vx^*)$ with $\vx \in \co(\cs)$ and $ w(\vx, \cs) \leq w_c$ it follows that $\vx^* \in \co(\cs)$.
\end{fact}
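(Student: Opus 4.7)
The plan is to prove Fact~\ref{fact:appx:asConvergence} by a finiteness-plus-compactness argument. Since $\vertex(\mathcal{F}(\vx^*))$ is a finite set, the collection of subsets $\cs \subseteq \vertex(\mathcal{F}(\vx^*))$ is finite as well. I split these subsets into the \emph{good} ones (those with $\vx^* \in \co(\cs)$) and the \emph{bad} ones (those with $\vx^* \notin \co(\cs)$). For each nonempty bad $\cs$, I would define
$w_{\cs} := \inf_{\vx \in \co(\cs)} w(\vx, \cs)$
and aim to show that $w_{\cs} > 0$. Setting $w_c := \tfrac{1}{2}\min\{w_{\cs} : \cs \text{ bad}\}$, a minimum over a finite collection of strictly positive numbers and hence strictly positive, would then yield the fact by contrapositive: for any bad $\cs$ and any $\vx \in \co(\cs)$ one would have $w(\vx, \cs) \geq w_{\cs} > w_c$, so the hypothesis $w(\vx, \cs) \leq w_c$ would force $\cs$ to be good.

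The core step, and where I expect the bulk of the work, is showing $w_{\cs} > 0$ for each bad $\cs$. The mapping $\vx \mapsto w(\vx, \cs) = \max_{\vy \in \cs}\inner{\nabla f(\vx)}{\vy} - \min_{\vz \in \cx}\inner{\nabla f(\vx)}{\vz}$ is continuous in $\vx$ (the gradient $\nabla f$ is continuous and both extrema are over compact sets), and $\co(\cs)$ is compact, so the infimum is attained at some $\vx_{\cs} \in \co(\cs)$. Suppose for contradiction that $w(\vx_{\cs}, \cs) = 0$. Expressing $\vx_{\cs}$ as a convex combination of elements of $\cs$ gives the sandwich
$\min_{\vz \in \cx}\inner{\nabla f(\vx_{\cs})}{\vz} \leq \inner{\nabla f(\vx_{\cs})}{\vx_{\cs}} \leq \max_{\vy \in \cs}\inner{\nabla f(\vx_{\cs})}{\vy}$,
where the first inequality uses $\vx_{\cs} \in \cx$. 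With $w(\vx_{\cs}, \cs) = 0$ forcing the two outer quantities to agree, both inequalities become equalities, and in particular $\inner{\nabla f(\vx_{\cs})}{\vz - \vx_{\cs}} \geq 0$ for all $\vz \in \cx$. By convexity, $\vx_{\cs}$ is then a global minimizer of $f$ over $\cx$, and by strong convexity (uniqueness of the minimizer) $\vx_{\cs} = \vx^*$, contradicting $\vx^* \notin \co(\cs)$. Hence $w_{\cs} > 0$.

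The argument is conceptually clean, and the only delicate point is the contradiction step above, which requires writing out all three terms of the sandwich so that the first-order optimality condition falls out cleanly before invoking strong convexity. Note that $w_c$ is defined only existentially and not via an explicit formula; this matches the phrasing of the fact and is consistent with the surrounding remark in the paper that $w_c$ may be arbitrarily small for pathological instances but only enters subsequent complexity bounds logarithmically.
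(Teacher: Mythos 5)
Your proof is correct. The paper itself does not prove this fact in-line (it imports it as a variation of Fact B.3 of the LaCG paper), but Remark~\ref{appx:remark:criticalWolfe} contains the intended argument: the chain $w(\vx,\cs) \geq f(\vx) - f(\vx^*) \geq \frac{m}{2}\norm{\vx - \vx^*}^2$ reduces positivity of $w_c$ to positivity of the critical radius $r_c$, the infimum of $\norm{\vx - \vx^*}$ over points in the convex hulls of the finitely many ``bad'' vertex subsets, yielding the quantitative bound $w_c \geq \frac{m}{2} r_c^2$. Your route shares the finiteness-over-bad-subsets skeleton but establishes positivity per bad subset differently: continuity of $\vx \mapsto w(\vx,\cs)$ together with compactness of $\co(\cs)$ to attain the infimum, followed by the observation that $w(\vx_{\cs},\cs)=0$ collapses your sandwich into equalities, in particular $\innp{\nabla f(\vx_{\cs}), \vz - \vx_{\cs}} \geq 0$ for all $\vz \in \cx$, so that $\vx_{\cs}$ is the (unique) minimizer and hence equals $\vx^*$, contradicting badness. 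That step is sound, and your factor of $\tfrac12$ in the definition of $w_c$ correctly handles the non-strict inequality in the statement. What the paper's route buys is an explicit lower bound on $w_c$ in terms of $m$ and $r_c$, tying $w_c$ to the critical radius used in LaCG (which the remark is explicitly after); what yours buys is that it only needs continuity of $\nabla f$ and uniqueness of the minimizer, with no quantitative use of strong convexity, at the price of being purely existential. The only point worth adding for completeness is the degenerate case where no bad subset exists, in which the minimum is over an empty family and any $w_c > 0$ serves.
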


\begin{remark} \label{appx:remark:criticalWolfe}
The critical strong Wolfe gap in Fact~\ref{fact:appx:asConvergence}, is a crucial parameter in the coming proofs. However, like the strict complementarity parameter $\tau$ \cite{guelat1986some,garber2020revisiting} and the critical radius defined in \cite{diakonikolas2019lacg}, the critical strong Wolfe gap can be arbitrarily small. Fortunately, as we will show in the proofs to come, it only affects the length of the burn-in phase of the accelerated algorithm, and moreover this dependence is logarithmically. Below, we sketch a simple example for which one can compute an exact expression for $w_c$. We then give an upper and a lower bound on $w_c$ for any general problem of the form shown in Problem~\eqref{eq:problem}.

Consider minimizing $f(\vx) = \frac{1}{2}\norm{\vx}^2$ over the unit probability simplex, which we denote by $\Delta^n$. The function is smooth and strongly convex, and the minimizer of this function over the feasible region is given by $\vx^* = \mathbf{1} / n$, where $n$ is the dimensionality of the problem and $\mathcal{F}(\vx^*) = \Delta^n$. We can compute the exact expression for $w_c$ as:
\begin{align}
    w_c & = \inf\limits_{\substack{\cs \subset \vertex\left(\Delta^n\right) \\  \vx \in \co(\cs) \\ \vx^* \notin \co(\cs)}} \max\limits_{\substack{\vu \in \cs \\ \mathbf{v}\in \Delta^n}} \innp{\nabla f(\vx), \vu - \mathbf{v}} \\
    & = \inf\limits_{\substack{\cs \subset \vertex\left(\Delta^n\right) \\  \vx \in \co(\cs) \\ \vx^* \notin \co(\cs)}} \left( \max\limits_{\substack{\vu \in \cs}} \innp{\vx, \vu }  - \min\limits_{\substack{\mathbf{v}\in \Delta^n}} \innp{\vx, \mathbf{v}}\right) \label{eq:appx:first_ineq} \\
    & = \inf\limits_{\substack{\cs \subset \vertex\left(\Delta^n\right) \\  \vx \in \co(\cs) \\ \vx^* \notin \co(\cs)}}  \max\limits_{\substack{\vu \in \cs}} \innp{\vx, \vu }  \label{eq:appx:second_ineq}  \\
    & = \inf\limits_{\substack{\cs \subset \vertex\left(\Delta^n\right) \\  \vx \in \co(\cs) \\ \vx^* \notin \co(\cs)}}  \norm{\vx}_{\infty} \label{eq:appx:third_ineq}  \\
    & = \frac{1}{n - 1}.
\end{align}
Where the equality in Equation~\eqref{eq:appx:first_ineq} follows from $\nabla f(\vx) = \vx$ and Equation~\eqref{eq:appx:second_ineq} from the fact that as $\cs \subset \vertex\left(\Delta^n\right)$, $\vx^* \notin \co (\cs)$ and $\vx \in \co (\cs)$, then one or more of the components of $\vx$ are zero, and as the rest are positive, it follows that $\min_{\substack{\mathbf{v}\in \Delta^n}} \innp{\vx, \mathbf{v}} = 0$. The equality in Equation~\eqref{eq:appx:third_ineq} follows from the fact that the components of $\vx$ are non-negative and the elements of $\cs$ are the standard orthogonal basis vectors, and so $\max_{\substack{\vu \in \cs}} \innp{\vx, \vu }  = \norm{\vx}_{\infty}$. The last equality simply follows from the fact that the infinum is achieved for the set $\cs$ of largest cardinality that does not contain $\vx^*$, and for a point $\vx^* \in \co (\cs)$ that minimizes the infinity norm. We can now prove a lower bound on $w_c$ for the general case as follows:
\begin{align*}
    w_c & = \inf\limits_{\substack{\cs \subset \vertex\left(\mathcal{F}(\vx^*)\right) \\  \vx \in \co(\cs) \\ \vx^* \notin \co(\cs)}} w(\vx, \cs)  \geq  \inf\limits_{\substack{\cs \subset \vertex\left(\mathcal{F}(\vx^*)\right) \\  \vx \in \co(\cs) \\ \vx^* \notin \co(\cs)}} \frac{m}{2} \norm{\vx^* -\vx}^2
    \geq  \inf\limits_{\substack{\cs \subset \vertex\left(\cx\right) \\  \vx \in \co(\cs) \\ \vx^* \notin \co(\cs)}} \frac{m}{2} \norm{\vx^* -\vx}^2
    =  \frac{\mu}{2} r_c^2.
\end{align*}
The first inequality stems from the fact as the function is $m$-strongly convex then $w(\vx, \cs) \geq f(\vx) - f(\vx^*) \geq \frac{m}{2} \norm{\vx - \vx^*}$. The second inequality stems from the fact that $\vertex\left(\mathcal{F}(\vx^*)\right) \subset \vertex (\cx)$. The last equality comes from the definition of $r_c$ in \cite{diakonikolas2019lacg}. Regarding the upper bound, for convenience we denote away-vertex and the Frank-Wolfe vertex as $\vu = \argmax_{\vy \in \cs } \innp{\nabla f(\vx), \vy}$ and $\mathbf{v} = \argmin_{\vy \in \cx } \innp{\nabla f(\vx), \vy}$, respectively. This allows us to write:
\begin{align*}
    w_c & = \inf\limits_{\substack{\cs \subset \vertex\left(\mathcal{F}(\vx^*)\right) \\  \vx \in \co(\cs) \\ \vx^* \notin \co(\cs)}} \innp{\nabla f(\vx^*), \vu - \vx^*} + \innp{\nabla f(\vx^*), \vx^* - \mathbf{v}} + \innp{\nabla f(\vx) - \nabla f(\vx^*), \vu - \mathbf{v}} \\
    & = \inf\limits_{\substack{\cs \subset \vertex\left(\mathcal{F}(\vx^*)\right) \\  \vx \in \co(\cs) \\ \vx^* \notin \co(\cs)}}   \innp{\nabla f(\vx) - \nabla f(\vx^*), \vu - \mathbf{v}} \\
    & \leq \inf\limits_{\substack{\cs \subset \vertex\left(\mathcal{F}(\vx^*)\right) \\  \vx \in \co(\cs) \\ \vx^* \notin \co(\cs)}}  \norm{\nabla f(\vx) - \nabla f(\vx^*)} \norm{ \vu - \mathbf{v}} \\
    & \leq LD \inf\limits_{\substack{\cs \subset \vertex\left(\mathcal{F}(\vx^*)\right) \\  \vx \in \co(\cs) \\ \vx^* \notin \co(\cs)}}  \norm{\vx - \vx^*}.
\end{align*}
Where the first equality is simply obtained from rewriting $\innp{\nabla f(\vx), \vu - \mathbf{v}} $, and the second inequality comes from the fact that $\innp{\nabla f(\vx^*), \vu - \vx^*} = 0$ from the strict complementarity assumption and $\innp{\nabla f(\vx^*), \mathbf{v} - \vx^*} \geq 0$ from the optimality conditions for $\vx^*$. The first inequality in the previous chain comes from the application of the Cauchy-Schwarz inequality, and the second inequality from the $L$-smoothness of $f$. To see why this upper bound means that $w_c$ can be arbitrarily small for certain problems, consider the setup in Figure~\ref{appx:fig:critical}.

\begin{figure}[h]
\centering
\includegraphics[width=0.4\textwidth]{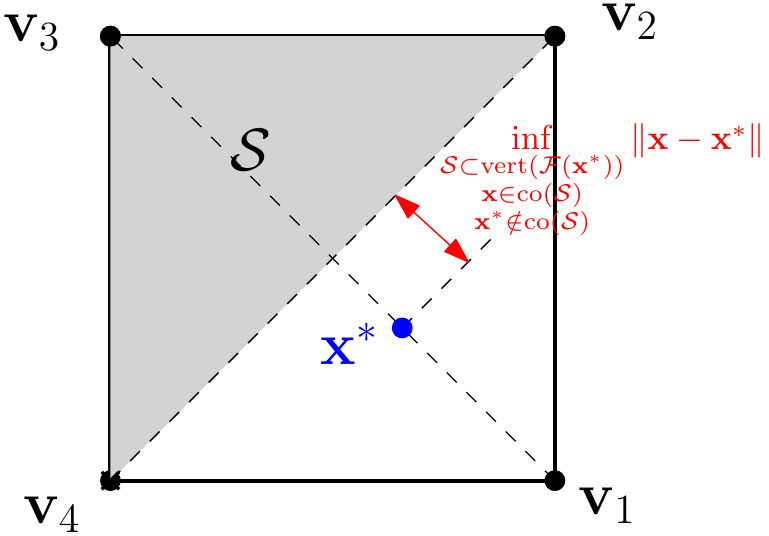} 
\caption{Example problem setup.} \label{appx:fig:critical}
\end{figure}

The figure shows a two dimensional example where $\mathcal{F}(\vx^*) = \cx = \co \left( \mathbf{v}_1,\mathbf{v}_2, \mathbf{v}_3, \mathbf{v}_4 \right)$. Moreover, the set $\cs = \co \left(\mathbf{v}_2, \mathbf{v}_3, \mathbf{v}_4\right)$ shown in light gray is contained in the optimal face, and does not contain $\vx^*$ (which is on the diagonal defined by $\mathbf{v}_1$ and $\mathbf{v}_3$ and just off the diagonal defined by $\mathbf{v}_2$ and $\mathbf{v}_4$). The distance from $\cs$ to $\vx^*$ is precisely $\inf_{\cs \subset \vertex\left(\mathcal{F}(\vx^*)\right),  \vx \in \co(\cs), \vx^* \notin \co(\cs)}  \norm{\vx - \vx^*}$, and it can be made arbitrarily small as $\vx^*$ approaches the diagonal defined by $\mathbf{v}_2$ and $\mathbf{v}_4$ along the diagonal defined by $\mathbf{v}_1$ and $\mathbf{v}_3$, which means that $w_c$ can become arbitrarily small.
\end{remark}

With these tools, we are ready to prove the desired convergence bound.

\begin{theorem} \label{theorem:appx:no-more-drop-steps}
  Assume the AFW algorithm (Algorithm~\ref{alg:fafw}) is run starting with $\vx_0 \in \vertex (\cx)$. If the strict complementarity assumption (Assumption~\ref{assumption:strictComplementarity}) is satisfied and $\vx^* \notin \vertex \left( \cx\right)$, then for $k \geq K_0$ with 
  $$
  K_0 = \frac{32L}{m \ln 2} \left(\frac{D}{\delta(\cx)}\right)^2 \log \left( \frac{2 w(\vx_0, \cs_0)}{\min\{  ( \tau /(2D(\sqrt{L/m} + 1)))^2/L, \tau, LD^2, 2 w_c \}}\right),
  $$
we have that $\vx_k \in \mathcal{F}(\vx^*)$, $\vx^* \in \co \left( \cs_k \right)$ and moreover:
  \begin{align*}
   %w (\vx_k, \cs_k) \leq \frac{LD\sqrt{2}}{\sqrt{m}}\sqrt{f\left(\vx_k\right) - f\left(\vx^*\right)}.
   w (\vx_k, \cs_k) \leq LD\sqrt{2(f\left(\vx_k\right) - f\left(\vx^*\right))/m}.
  \end{align*}
  Where $\delta\left( \cx\right)$ is the pyramidal width in Definition~\ref{defn:delta-scaling}, and $w_c>0$ is the critical strong Wolfe gap in Fact~\ref{fact:asConvergence}.
  \begin{proof}
  Using the primal gap bound in \citet[Theorem 1]{lacoste2015global}, and assuming that $\vx_0 \in \vertex (\cx)$, then for iterations 
  \begin{align*}
  k \geq \frac{8LD^2}{m\delta(\cx)^2} \log \left(\frac{2 w(\vx_o, \cs_0)}{\min\{ ( \tau /(2D(\sqrt{L/m} + 1)))^2/L, \tau, LD^2 \}}\right),
  \end{align*}
  we know that $$ f(\vx_k) -f(\vx^*) < \frac{1}{2} \min\left\{ ( \tau /(2D(\sqrt{L/m} + 1)))^2/L, \tau, LD^2 \right\}.$$ 
  Using the first claim in Theorem~\ref{theorem:drop-step-if-not-in-optimal-face}, as the algorithm will at most have picked up one vertex of the polytope $\cx$ per iteration, then the AFW algorithm will at most need to drop a number of vertices from its active set equal to the number of iterations that have elapsed before $\vx_k \in \mathcal{F}(\vx^*)$. This means that for \begin{align*}
  k \geq  \frac{16LD^2}{m\delta(\cx)^2} \log \left(\frac{2 w(\vx_0, \cs_0)}{\min\{ ( \tau /(2D(\sqrt{L/m} + 1)))^2/L, \tau, LD^2 \}}\right)
    \end{align*}
  we know that $\vx_k \in \mathcal{F}(\vx^*), $ 
  and, moreover, we have that $w (\vx_k, \cs_k) \leq LD\sqrt{2/m}\sqrt{f\left(\vx_k\right) - f\left(\vx^*\right)}$ by the second claim in Theorem~\ref{theorem:drop-step-if-not-in-optimal-face}. It remains to bound the number of iterations until $\vx^*\in\co \left( \cs_k\right)$ in the AFW algorithm. Using Fact~\ref{fact:asConvergence}, and the convergence guarantees in \citet{kerdreux2019restartfw}, we know that it suffices to run $32 LD^2 / (m \delta(\cx)^2 \ln 2 ) \log \left( w(\vx_0, \cs_0)/w_c\right)$ iterations of the AFW algorithm for $w(\vx_k, \cs_k)\leq w_c$. Consequently, we have that for $k \geq K_0$ it holds $\vx^* \in \co \left( \cs_k \right)$.
  \end{proof}
\end{theorem}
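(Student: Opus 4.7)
The plan is to stitch together three ingredients: (i) the global primal-gap convergence rate of AFW, (ii) the drop-step structure provided by Theorem~\ref{appx:theorem:drop-step-if-not-in-optimal-face} once the primal gap is small enough, and (iii) the halving-based contraction of the strong Wolfe gap from \cite{kerdreux2019restartfw} in order to bring $w(\vx_k,\cs_k)$ below the critical threshold $w_c$ so that Fact~\ref{fact:appx:asConvergence} can be applied.

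First, I would use the linear convergence of the primal gap for AFW on polytopes (e.g.\ \citet[Thm.~1]{lacoste2015global}), together with the $\delta(\cx)$-scaling inequality and the initial relation between primal gap and $w(\vx_0,\cs_0)$, to certify that after
$O\!\bigl(\tfrac{LD^2}{m\delta(\cx)^2}\log \tfrac{w(\vx_0,\cs_0)}{\epsilon_*}\bigr)$
iterations, where $\epsilon_* = \tfrac{1}{2}\min\{(\tau/(2D(\sqrt{L/m}+1)))^2/L,\tau,LD^2\}$, the primal gap is below the threshold needed to invoke Theorem~\ref{appx:theorem:drop-step-if-not-in-optimal-face}.

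Next, I would argue that once the primal gap is below $\epsilon_*$, the first claim of Theorem~\ref{appx:theorem:drop-step-if-not-in-optimal-face} guarantees that at every iteration $k$ with $\vx_k\notin \cf(\vx^*)$, AFW performs a maximal away-step that removes a vertex of $\cs_k \setminus \vertex(\cf(\vx^*))$ from the active set. Since $\vx_0$ is a single vertex and AFW introduces at most one new vertex per iteration, the total count of vertices ever in the active set after $k$ iterations is at most $k+1$; hence after at most another $O(\tfrac{LD^2}{m\delta(\cx)^2}\log \tfrac{w(\vx_0,\cs_0)}{\epsilon_*})$ iterations every such ``foreign'' vertex is purged, forcing $\vx_k\in\cf(\vx^*)$. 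The second claim of Theorem~\ref{appx:theorem:drop-step-if-not-in-optimal-face} then both (a) keeps the iterates inside $\cf(\vx^*)$ for all subsequent iterations (all Frank-Wolfe and away vertices lie in $\cf(\vx^*)$), and (b) delivers the inequality $w(\vx_k,\cs_k)\le LD\sqrt{2(f(\vx_k)-f(\vx^*))/m}$ stated in the theorem.

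Finally, to obtain $\vx^*\in \co(\cs_k)$, I would further reduce $w(\vx_k,\cs_k)$ until it drops below the critical strong Wolfe gap $w_c$ from Fact~\ref{fact:appx:asConvergence}; since the iterates already lie in $\cf(\vx^*)$, this fact then yields $\vx^*\in \co(\cs_k)$. The halving result Proposition~\ref{prop:fafw-halve-rate} (from \cite{kerdreux2019restartfw}), applied to successive halvings of $w(\vx,\cs)$, shows that each halving costs $O(C_f^A/(m\delta(\cx)^2)) = O(LD^2/(m\delta(\cx)^2))$ iterations (the net-of-active-set term telescopes across halvings), so $O(\log(w(\vx_0,\cs_0)/w_c))$ halvings suffice. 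Collecting the three contributions and simplifying the logarithms into the combined factor $\log\!\bigl(2w(\vx_0,\cs_0)/\min\{(\tau/(2D(\sqrt{L/m}+1)))^2/L,\tau,LD^2,2w_c\}\bigr)$ yields the stated bound on $K_0$.

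The main obstacle I anticipate is bookkeeping rather than any deep technical hurdle: carefully tracking the constants in the three phases (reaching $\epsilon_*$, purging foreign vertices, and halving down to $w_c$) so they combine into the single clean $K_0$ in the statement, and verifying that once $\vx_k\in\cf(\vx^*)$ the subsequent halving phase does not push the iterates back out of $\cf(\vx^*)$ — which follows, in turn, from the second claim of Theorem~\ref{appx:theorem:drop-step-if-not-in-optimal-face} since the primal gap bound required there is monotone along the AFW iterates thanks to line search.
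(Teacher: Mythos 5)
Your proposal is correct and follows essentially the same three-phase argument as the paper's proof: linear primal-gap convergence of AFW to reach the threshold of Theorem~\ref{appx:theorem:drop-step-if-not-in-optimal-face}, a vertex-counting argument to bound the purging of active-set vertices outside $\cf(\vx^*)$, and the halving guarantee from \citet{kerdreux2019restartfw} combined with Fact~\ref{fact:appx:asConvergence} to certify $\vx^*\in\co(\cs_k)$ once $w(\vx_k,\cs_k)\le w_c$. The only difference is that you spell out slightly more of the bookkeeping (e.g.\ why iterates remain in $\cf(\vx^*)$), which the paper leaves implicit.
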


%%%%%%%%%%%%%%%%%%%%%%%%%%%%%%%%%%%
\section{Acceleration Analysis} \label{appx:section:acceralated-analysis}

Before delving into the analysis of the accelerated algorithm used in our work, we review some useful properties of gradient mapping and define its inexact variant.

\subsection{Gradient Mapping}

One of the properties that will be useful for our analysis is summarized in the following two propositions.
%
%\todo{Potentially move these two propositions to later on, where the technical material has started.}
\begin{proposition}\label{prop:opt-gap-ub-via-grad-map}
  Let $\cc\subseteq \rr^n$ and let $f:\rr^n \to \rr$ be differentiable and $m$-strongly convex on $\cc,$ for some $m > 0.$ Then, $\forall \vx \in \cc,$ $\forall m' \geq m:$
  $$
        f(\vx) - \min_{\vu \in \cc} f(\vu) \leq \frac{1}{2m}\|G_m(\vx)\|^2 \leq \frac{1}{2m}\|G_{m'}(\vx)\|^2.
  $$
\end{proposition}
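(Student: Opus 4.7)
My plan is to handle the two inequalities in the chain separately.

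For the first inequality, I would invoke $m$-strong convexity of $f$ together with the definition of $\vp := P_\cc(\vx - \frac{1}{m}\nabla f(\vx))$, which gives $G_m(\vx) = m(\vx - \vp)$. Strong convexity yields, for every $\vu \in \cc$,
\[
  f(\vu) \ge f(\vx) + \langle \nabla f(\vx), \vu - \vx\rangle + \frac{m}{2}\|\vu - \vx\|^2,
\]
and after completing the square the right-hand side equals a constant plus $\frac{m}{2}\|\vu - (\vx - \frac{1}{m}\nabla f(\vx))\|^2$, so its minimizer over $\cc$ is precisely $\vp$. Taking the infimum over $\vu \in \cc$ therefore bounds $\min_{\vu \in \cc} f(\vu)$ from below by $f(\vx) + \langle \nabla f(\vx), \vp - \vx\rangle + \frac{m}{2}\|\vp - \vx\|^2$. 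Substituting $\vp - \vx = -G_m(\vx)/m$ and $\|\vp - \vx\|^2 = \|G_m(\vx)\|^2/m^2$, and then invoking the variational inequality for the projection, $\langle \nabla f(\vx) - G_m(\vx), \vu - \vp\rangle \ge 0$ for all $\vu \in \cc$ (with $\vu = \vx \in \cc$ this gives $\langle \nabla f(\vx), G_m(\vx)\rangle \ge \|G_m(\vx)\|^2$), one collapses the cross term and recovers $f(\vx) - \min_{\vu \in \cc} f(\vu) \le \frac{1}{2m}\|G_m(\vx)\|^2$.

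For the second inequality, I would use the standard monotonicity of the gradient-mapping norm in its step-size parameter. Writing $\vp_\eta := P_\cc(\vx - \frac{1}{\eta}\nabla f(\vx))$, the variational inequalities characterizing $\vp_m$ and $\vp_{m'}$ when added yield
\[
  (m + m')\langle \vx - \vp_m, \vx - \vp_{m'}\rangle \ge m\|\vx - \vp_m\|^2 + m'\|\vx - \vp_{m'}\|^2.
\]
Applying Cauchy--Schwarz to the left-hand side and comparing with the right-hand side forces $m\|\vx - \vp_m\| \le m'\|\vx - \vp_{m'}\|$, which is exactly $\|G_m(\vx)\| \le \|G_{m'}(\vx)\|$, and dividing by $2m$ gives the second inequality.

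The main step that requires care is the algebraic step in Part~1 that turns the bound $\frac{1}{m}\langle \nabla f(\vx), G_m(\vx)\rangle - \frac{1}{2m}\|G_m(\vx)\|^2$ (produced directly by the strong-convexity computation) into the clean expression $\frac{1}{2m}\|G_m(\vx)\|^2$; the projection inequality must be invoked in exactly the right direction so that the cross term $\frac{1}{m}\langle \nabla f(\vx), G_m(\vx)\rangle$ matches up with $\frac{1}{m}\|G_m(\vx)\|^2$ and cancels cleanly with the remaining quadratic term. Part~2 is then essentially a one-line consequence of the projection inequalities plus Cauchy--Schwarz.
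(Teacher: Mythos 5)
Your Part~1 breaks down at exactly the step you flag as delicate, and the inequality you need there goes the wrong way. Your strong-convexity computation correctly yields
\[
  f(\vx)-\min_{\vu\in\cc}f(\vu)\;\le\;\frac{1}{m}\innp{\nabla f(\vx),\,G_m(\vx)}-\frac{1}{2m}\norm{G_m(\vx)}^2,
\]
and the projection variational inequality with $\vu=\vx$ correctly yields $\innp{\nabla f(\vx),G_m(\vx)}\ge\norm{G_m(\vx)}^2$. But substituting that bound into the display only shows that your upper estimate is \emph{at least} $\frac{1}{2m}\norm{G_m(\vx)}^2$; to ``collapse the cross term'' you would need the reverse inequality $\innp{\nabla f(\vx),G_m(\vx)}\le\norm{G_m(\vx)}^2$, which is generally false once the constraint truncates the gradient step. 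Equivalently, writing $\ell(\vu)=\innp{\nabla f(\vx),\vu-\vx}+\frac{m}{2}\norm{\vu-\vx}^2$ and $\vp=\argmin_{\vu\in\cc}\ell(\vu)$, what one can prove from $\vx\in\cc$ and strong convexity of $\ell$ is $\ell(\vp)\le-\frac{1}{2m}\norm{G_m(\vx)}^2$, whereas the proposition requires $\ell(\vp)\ge-\frac{1}{2m}\norm{G_m(\vx)}^2$.

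This is not a gap you can patch by reorienting the argument: the first inequality of the chain fails in the constrained setting. Take $n=1$, $\cc=[0,\infty)$, $f(x)=\frac12(x+1)^2$ (so $m=1$) and $x=1$: then $\min_{\cc}f=f(0)=\frac12$ and $f(1)-\min_{\cc}f=\frac32$, while $P_{\cc}\bigl(1-f'(1)\bigr)=P_{\cc}(-1)=0$ gives $G_1(1)=1$ and $\frac{1}{2m}\norm{G_1(1)}^2=\frac12$. The paper's own one-line proof (``use strong convexity and minimize both sides over $\vu\in\cc$'') stalls at the same point and does not establish the claim either; the clean constant $\frac{1}{2m}$ is only available when $\innp{\nabla f(\vx),G_m(\vx)}=\norm{G_m(\vx)}^2$, e.g.\ in the unconstrained case where $G_m(\vx)=\nabla f(\vx)$, and correct constrained analogues in the literature carry an extra factor. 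By contrast, your Part~2 is correct and is in fact more self-contained than the paper's, which merely cites monotonicity of $\eta\mapsto\norm{G_{\eta}(\vx)}$: adding the two variational inequalities and applying Cauchy--Schwarz gives $\bigl(m\norm{\vx-\vp_m}-m'\norm{\vx-\vp_{m'}}\bigr)\bigl(\norm{\vx-\vp_m}-\norm{\vx-\vp_{m'}}\bigr)\le 0$, and $m\norm{\vx-\vp_m}>m'\norm{\vx-\vp_{m'}}$ together with $m'\ge m$ would force both factors to be positive, so $\norm{G_m(\vx)}\le\norm{G_{m'}(\vx)}$ follows.
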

\begin{proof}
Fix any $\vx \in \cc.$ The first inequality in the proposition follows by using strong convexity of $f,$ which states that for all $\vu \in \cc$ we have that $f(\vu) - f(\vx) \geq \innp{\nabla f(\vx), \vu - \vx} + \frac{m}{2}\|\vu - \vx\|^2$, 
and minimizing both sides over $\vu \in \cc$. The second inequality is a consequence of the monotonicity of gradient mapping (see~\citet[Theorem~10.9]{beck2017optimization}).
\end{proof}

\begin{proposition}\label{prop:opt-gap-lb-via-grad-map}
  Let $\cc\subseteq \rr^n$ and let $f:\rr^n \to \rr$ be $L$-smooth on $\cc,$ for some $L < \infty.$ Then, $\forall \vx \in \cc,$ $\forall \eta \geq L:$
  $$
        f(\vx) - \min_{\vu \in \cc} f(\vu) \geq \frac{1}{2\eta}\|G_{\eta}(\vx)\|^2.
  $$
\end{proposition}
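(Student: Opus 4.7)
The plan is to exhibit a single point in $\cc$ at which $f$ is already at least $\frac{1}{2\eta}\|G_\eta(\vx)\|^2$ below $f(\vx)$; the natural candidate is the projected point itself. Concretely, I would set $\vx^+ = P_\cc\bigl(\vx - \tfrac{1}{\eta}\nabla f(\vx)\bigr)$, so that $\vx^+ \in \cc$ and $G_\eta(\vx) = \eta(\vx - \vx^+)$, and then show
\[
    f(\vx^+) \leq f(\vx) - \frac{1}{2\eta}\|G_\eta(\vx)\|^2.
\]
The conclusion is then immediate from $\min_{\vu \in \cc}f(\vu) \leq f(\vx^+)$.

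The inequality above is obtained from two ingredients. First, $L$-smoothness of $f$ on $\cc$ combined with $\eta \geq L$ gives the descent-lemma bound
\[
    f(\vx^+) \leq f(\vx) + \innp{\nabla f(\vx), \vx^+ - \vx} + \frac{\eta}{2}\|\vx^+ - \vx\|^2 = f(\vx) - \frac{1}{\eta}\innp{\nabla f(\vx), G_\eta(\vx)} + \frac{1}{2\eta}\|G_\eta(\vx)\|^2.
\]
Second, I would use the first-order optimality condition for the projection $\vx^+$, namely $\innp{\vx^+ - (\vx - \tfrac{1}{\eta}\nabla f(\vx)), \vu - \vx^+} \geq 0$ for every $\vu \in \cc$, applied at the feasible point $\vu = \vx$. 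Rearranging yields
\[
    \innp{\nabla f(\vx), G_\eta(\vx)} \geq \|G_\eta(\vx)\|^2,
\]
i.e., a lower bound on the gradient--gradient-mapping inner product that exactly cancels the cross term. Plugging this back into the descent bound gives $f(\vx^+) \leq f(\vx) - \tfrac{1}{2\eta}\|G_\eta(\vx)\|^2$, as required.

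No step is really an obstacle here — the proof is a two-line exercise once one recognizes which projection inequality to invoke. The only subtlety worth flagging in writing is making sure the smoothness inequality can legitimately be applied at $\vx$ and $\vx^+$ (both lie in $\cc$, where $f$ is assumed $L$-smooth) and that the direction $\vu = \vx$ used in the projection optimality condition is feasible (it is, since $\vx \in \cc$ by hypothesis). Neither convexity nor strong convexity of $f$ is needed, which matches the statement.
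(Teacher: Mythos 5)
Your proof is correct: the sufficient-decrease bound at the projected point $\vx^+ = P_{\cc}(\vx - \tfrac{1}{\eta}\nabla f(\vx))$, obtained by combining the descent lemma (valid since $\vx,\vx^+\in\cc$ and $\cc$ is convex) with the projection optimality condition at $\vu=\vx$, is exactly the standard argument that the paper defers to \citet[Chapter~10]{beck2017optimization}. Nothing is missing, and your observation that neither convexity nor strong convexity of $f$ is needed is accurate.
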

The proof is omitted and can instead be found in, e.g.,~\citet[Chapter 10]{beck2017optimization}.

To construct an implementable version of our algorithm, we rely on \emph{inexact} projections, which in turn lead to \emph{inexact} evaluations of the gradient mapping, defined below. This is the primary reason why we cannot rely on black box applications of any of the existing results, and instead require a variant of a parameter-free accelerated method that can work with inexact evaluations of the gradient mapping.

\begin{definition}[Inexact gradient mapping]
  \label{defn:inexact-gradient-mapping}
  Let $\cc$ be a closed convex set and let $f: \rr^n \rightarrow \rr$ be smooth and convex on $\cc$. For any $\vx \in \cc$ and $\eta > 0$, we say that $\tG_{\eta} (\vx) = \eta (\vx - \vy)$ is an $\epsilon^{\ell}$-approximate gradient mapping at $\vx$ for some $\vx \in \cc$, if $\vy \in \cc$ and $\ell (\vy) \le \min_{\vu \in \cc}\ell (\vu) + \epsilon^{\ell}$, where we define $\ell (\vu) = \inner{\nabla f (\vx)}{\vu - \vx} + \frac{\eta}{2} \norm{\vu - \vx}_2^2$.% and $\vyt = \argmin_{\vu \in \cc} \ell (\vu)$.
\end{definition}

\subsection{Proof of Lemma~\ref{lemma:ACC-inner}}

In this section, we prove Lemma~\ref{lemma:ACC-inner}. To make it easier to follow the argument, we restate the lemma below. Much of the proof closely follows the proof of Lemma~3.2 in~\cite{diakonikolas2019lacg}. However, there are differences in both the algorithm and the proof itself required to obtain the bound for the gradient mapping, and we fill in the additional arguments that are needed to prove the lemma below.

\ACClemma*
\begin{proof}
The proof follows the Approximate Duality Gap Technique~\cite{diakonikolas2019approximate}, similar to~\cite{diakonikolas2019lacg}. The main idea is to construct an approximation of the true optimality gap $f_{\sigma}(\vy_k) - f_{\sigma}(\vx^*_{\sigma})$, where $\vx^*_{\sigma} = \argmin_{\vx \in \cc}f_{\sigma}(\vx)$ and show that it contracts at rate $1/A_k,$ where $A_k$ is made as fast growing as possible.

The approximate gap $\Gamma_k$ is constructed as $\Gamma_k = \Upsilon_k - \Lambda_k,$ where $\Upsilon_k = f_{\sigma}(\vy_k)$ and $\Lambda_k$ is defined by~\cite{diakonikolas2019lacg}
\begin{equation}\label{eq:lower-bnd}
    \begin{aligned}
        \Lambda_k = \frac{\sum_{i=0}^k a_i f_{\sigma}(\vx_i) + \min_{\vu \in \cc}m_k(\vu) - \frac{\eta_0\|\vx^*_{\sigma} - \vx_0\|^2}{2}}{A_k},
    \end{aligned}
\end{equation}
$$
    m_k(\vu) \defeq \sum_{i=0}^k a_i \innp{\nabla f_{\sigma}(\vx_i), \vu - \vx_i} + \sum_{i=0}^k a_i \frac{\sigma}{2}\|\vu - \vx_i\|^2 + \frac{\eta_0}{2}\|\vu - \vx_0\|^2.
$$
 By construction, $\Lambda_k \leq f_{\sigma}(\vx^*_{\sigma})$~\cite{diakonikolas2019lacg}, and, thus, $f_{\sigma}(\vx_k) - f_{\sigma}(\vx^*_{\sigma}) \leq \Gamma_k.$ Further, $ \argmin_{\vu \in \cc}m_k(\vu) = \argmin_{\vu \in \cc}M_k(\vu).$ 

Let $\mathbf{v}_k^* = \argmin_{\vu\in \cc}m_k(\vu),$ and observe that, by construction, $m_k(\mathbf{v}_k) - m_k(\mathbf{v}_k^*)\leq \epsilon_k^M.$  Following the same argument as in~\cite{diakonikolas2019lacg}, we have that
\begin{equation}\label{eq:init-gap}
    A_0\Gamma_0 \leq  \frac{\eta_0\|\vx^*_{\sigma} - \vx_0\|^2}{2} + \epsilon_0^M
\end{equation}
and
\begin{equation}\label{eq:change-in-m_k}
    \begin{aligned}
        m_{k}(\mathbf{v}_k^*)& - m_{k-1}(\mathbf{v}_{k-1}^*) \\
&\geq a_k \innp{\nabla f_{\sigma}(\vx_k), \mathbf{v}_k - \vx_k} + \frac{\sigma A_k}{4}\|\mathbf{v}_k - (1-\theta_k)\mathbf{v}_{k-1} - \theta_k \vx_k\|^2 - \epsilon^M_k - \epsilon^M_{k-1}.
    \end{aligned}
\end{equation}
From Algorithm~\ref{algo:ACC-iter}, $\vyh_k - \vx_k = \theta_k(\mathbf{v}_k - (1-\theta_k)\mathbf{v}_{k-1} - \theta_k \vx_k),$ and, thus, combining with Eq.~\eqref{eq:change-in-m_k} and the definition of the lower bound in Eq.~\eqref{eq:lower-bnd}, it follows that:
\begin{equation}\label{eq:lb-change}
    A_k\Lambda_k - A_{k-1}\Lambda_{k-1} \geq a_k f_{\sigma}(\vx_k) + a_k \innp{\nabla f_{\sigma}(\vx_k), \mathbf{v}_k - \vx_k} + \frac{\sigma A_k}{4{\theta_k}^2}\|\vyh_k - \vx_k\|^2 - \epsilon^M_k - \epsilon^M_{k-1}.    
\end{equation}
On the other hand, from $\Upsilon_k = f(\vy_k)$, we have
\begin{align}
    A_k \Upsilon_k - A_{k-1}\Upsilon_{k-1} =\;& A_k f_{\sigma}(\vy_k) - A_{k-1}f_{\sigma}(\vy_{k-1})\notag\\
    =\;& A_k(f_{\sigma}(\vy_k) - f_{\sigma}(\vyh_k))  + a_k f_{\sigma}(\vx_k)\notag\\
    &+ A_k(f_{\sigma}(\vyh_k) - f_{\sigma}(\vx_k)) + A_{k-1}(f_{\sigma}(\vx_k) - f_{\sigma}(\vy_{k-1})).\label{eq:change-in-ub-1}
\end{align}
By the exit criterion of the repeat-until loop in Algorithm~\ref{algo:ACC-iter}, we have $f_{\sigma}(\vyh_k) - f_{\sigma}(\vx_k) \leq \innp{\nabla f_{\sigma}(\vx_k), \vyh_k - \vx_k} + \frac{\eta_k + \sigma}{2}\|\vyh_k - \vx_k\|^2$, while by convexity of $f_{\sigma},$ $f_{\sigma}(\vx_k) - f_{\sigma}(\vy_{k-1}) \leq \innp{\nabla f_{\sigma}(\vx_k), \vx_k - \vy_{k-1}}.$ Plugging into Eq.~\eqref{eq:change-in-ub-1}, we have
\begin{align*}
    A_k \Upsilon_k - A_{k-1}\Upsilon_{k-1} \leq\;& A_k(f_{\sigma}(\vy_k) - f_{\sigma}(\vyh_k))  + a_k f_{\sigma}(\vx_k)\\
    &+ \innp{\nabla f_{\sigma}(\vx_k), A_k \vyh_k - A_{k-1}\vy_{k-1} - a_k \vx_k} + \frac{A_k (\eta_k + \sigma)}{2}\|\vyh_k - \vx_k\|^2.
\end{align*}
By definition of $\vyh_k,$ $A_k \vyh_k - A_{k-1}\vy_{k-1} - a_k \vx_k = a_k(\mathbf{v}_k - \vx_k).$ Thus, combining the last inequality with Eq.~\eqref{eq:lb-change}
\begin{align*}
    A_k\Gamma_k - A_{k-1}\Gamma_{k-1} \leq\;& A_k(f_{\sigma}(\vy_k) - f_{\sigma}(\vyh_k)) + \frac{A_k}{4}\Big(2(\eta_k + \sigma) - \frac{\sigma}{{\theta_k}^2}\Big)\|\vyh_k - \vx_k\|^2 + \epsilon_{k}^M + \epsilon_{k-1}^M\\
    \leq \;& A_k(f_{\sigma}(\vy_k) - f_{\sigma}(\vyh_k)) + \epsilon_{k}^M + \epsilon_{k-1}^M,
\end{align*}
as $\theta_k = \frac{a_k}{A_k} \leq \sqrt{\frac{\sigma}{2(\eta_k + \sigma)}}.$ 

To bound $f_{\sigma}(\vy_k) - f_{\sigma}(\vyh_k),$ observe that, by the exit condition in the repeat-until loop in Algorithm~\ref{algo:ACC-iter}, we have
\begin{align}\label{eq:acc-descent}
    f_{\sigma}(\vy_k) - f_{\sigma}(\vyh_k) \leq \innp{\nabla f_{\sigma}(\vyh_k), \vy_k - \vyh_k} + \frac{\eta_k + \sigma}{2}\|\vy_k - \vyh_k\|^2 = \ell_k(\vy_k).
\end{align}
Let $\vyb_k = \argmin_{\vu\in \cc}\ell_k(\vu)$, and let us argue now that $\ell_k(\vyb_k) \leq - \frac{1}{2(\eta_k+\sigma)}\|G_{\eta_k + \sigma}^{{\sigma}}(\vyh_k)\|^2.$ This can be proved using the definitions of $G_{\eta_k}^{{\sigma}}(\vyh_k)$ and $\vyb_k$. In particular, this property follows from
  \begin{align*}
      \ell_k(\vyb_k) &= \innp{\nabla f_{\sigma}(\vyh_k), \vyb_k - \vyh_k} + \frac{\eta_k + \sigma}{2}\|\vyb_k - \vyh_k\|^2\\
      &= \innp{\nabla f_{\sigma}(\vyh_k) - G_{\eta_k + \sigma}^{{\sigma}}(\vyh_k), \vyb_k - \vyh_k} - \frac{1}{2(\eta_k + \sigma)}\|G_{\eta_k + \sigma}^{{\sigma}}(\vyh_k)\|^2
  \end{align*}
  and
  \begin{align*}
      \innp{\nabla f_{\sigma}(\vyh_k) - G_{\eta_k + \sigma}^{{\sigma}}(\vyh_k), \vyb_k - \vyh_k} &= \innp{\nabla f_{\sigma}(\vyh_k) + (\eta_k + \sigma)(\vyb_k - \vyh_k), \vyb_k - \vyh_k} = \innp{\nabla \ell_k(\vyb_k), \vyb_k - \vyh_k} \leq 0,
  \end{align*}
  where the last inequality follows from the first-order optimality of $\vyb_k = \argmin_{\vy \in \cc} \ell_k(\vy).$
  
  Combining with Eq.~\eqref{eq:acc-descent}, and using that $\ell_k(\vy_k) - \ell_k(\vyb_k) \leq \epsilon_k^{\ell}$ (by definition of $\vy_k$), we have
  $$
        f_{\sigma}(\vy_k) - f_{\sigma}(\vyh_k) \leq -\frac{1}{2(\eta_k + \sigma)}\|G_{\eta_k}^{{\sigma}}(\vyh_k)\|^2 + \epsilon_k^{\ell}. 
  $$
  Thus, it follows that
  \begin{equation}\label{eq:change-in-gap-final}
      A_k \Gamma_k - A_{k-1}\Gamma_{k-1} \leq -\frac{A_k}{2(\eta_k + \sigma)}\|G_{\eta_k}^{{\sigma}}(\vyh_k)\|^2 + A_k\epsilon_k^{\ell} + \epsilon_{k}^M + \epsilon_{k-1}^M.
  \end{equation}
  Telescoping Eq.~\eqref{eq:change-in-gap-final} and using the bound on the initial gap from Eq.~\eqref{eq:init-gap}, we have
  \begin{align*}
      A_k \Gamma_k &\leq A_0 \Gamma_0 - \sum_{i=1}^k \frac{A_i}{2(\eta_i + \sigma)}\|G_{\eta_i + \sigma}^{{\sigma}}(\vyh_i)\|^2 + \sum_{i=0}^k (2\epsilon_i^M + A_i\epsilon_i^{\ell})\\
      &\leq -\frac{A_k}{2(\eta_k + \sigma)}\|G_{\eta_k + \sigma}^{{\sigma}}(\vyh_k)\|^2 + \frac{\eta_0\|\vx^*_{\sigma} - \vx_0\|^2}{2} + \sum_{i=0}^k (2\epsilon_i^M + A_i\epsilon_i^{\ell}).
  \end{align*}
  As $\Gamma_k \geq f_{\sigma}(\vy_k) - f_{\sigma}(\vx^*_{\cc}) \geq 0,$ we finally have
  \begin{equation}\label{eq:grad-map-gen}
      \frac{1}{\eta_k + \sigma}\|G_{\eta_k + \sigma}^{{\sigma}}(\vyh_k)\|^2 \leq \frac{2}{A_k} \Big(\frac{\eta_0\|\vx^*_{\sigma} - \vx_0\|^2}{2} + \sum_{i=0}^k (2\epsilon_i^M + A_i\epsilon_i^{\ell})\Big),
  \end{equation}
  which gives the first part of the lemma.

For the remaining part, as $f$ is $L$-smooth, it follows that the condition from the repeat-until loop is satisfied for any $\eta_k \geq L.$ As $\eta_k$ gets doubled each time the condition is not satisfied, we have $\eta_k \leq 2L,$ $\forall k.$ Thus the total number of times the repeat-until loop is entered is at most $k$ (the total number of iterations) plus $\log(\frac{2L}{\eta_0}).$ As each pass through the loop requires two gradient evaluations and two calls to a projection oracle, what remains to be shown is the stated bound on $k.$

Choosing $A_i \epsilon^{\ell}_i \leq \epsilon_i^M = \frac{a_i\epsilon^2}{8}$ and the already argued bound on $\eta_k,$ we have, from Eq.~\eqref{eq:grad-map-gen}:% and \eqref{eq:dist-to-opt-yh_k}
\begin{gather}
    \frac{1}{\eta_k + \sigma}\|G_{\eta_k + \sigma}^{{\sigma}}(\vyh_k)\|^2 \leq \frac{2L\|\vx^*_{\sigma}- \vx_0\|^2}{A_k}  + \frac{3\epsilon^2}{4},%\\
    %
    %\sigma\|\vyh_k - \vx^*_{\sigma}\|^2 
    %f(\vyh_k) - f(\vx^*_{\sigma}) \leq \frac{L\|\vx^*_{\sigma}- \vx_0\|^2}{A_k}  + \frac{3\epsilon^2}{8},
\end{gather}
and it remains to bound the number of iterations $k$ until $\frac{2L\|\vx^*_{\sigma} - \vx_0\|^2}{A_k} \leq \frac{\epsilon^2}{4}$. To do so, notice that, as $a_0 = A_0 = 1$ and $\frac{a_k}{A_k} = \sqrt{\frac{\sigma}{2(\eta_k + \sigma)}} \leq \sqrt{\frac{\sigma}{2(2L+ \sigma)}}$ for $k \geq 1$, we have that $\frac{A_{k-1}}{A_k} \leq 1 - \sqrt{\frac{\sigma}{2(2L + \sigma)}},$ and thus, 
$$
    \frac{1}{A_k} = \frac{A_0}{A_1}\frac{A_1}{A_2}\dots  \frac{A_{k-1}}{A_k} \leq \bigg(1 - \sqrt{\frac{\sigma}{2(2L + \sigma)}}\bigg)^k \leq \exp\bigg(-k \sqrt{\frac{\sigma}{2(2L + \sigma)}}\bigg).
$$
Thus, $k$ is bounded by
$$
    k \leq \sqrt{\frac{2(2L + \sigma)}{\sigma}} \log \Big(\frac{16 L\|\vx^*_{\sigma} - \vx_0\|^2}{\epsilon^2}\Big) = O\bigg(\sqrt{\frac{L}{\sigma}}\log\Big(\frac{\sqrt{L}\|\vx^*_{\sigma} - \vx_0\|}{\epsilon}\Big)\bigg),
$$
as claimed.
\end{proof}

\subsection{Proof of Theorem~\ref{thm:ACC-full}}

We now proceed with proving Theorem~\ref{thm:ACC-full}. Before providing the full proof, we first state and prove some supporting claims.

\begin{proposition}\label{prop:grad-map-approx}
  Given $\vyh \in \cc,$ let $\ell(\vu) = \innp{\nabla f_{\sigma}(\vyh), \vu - \vyh} + \frac{\eta + \sigma}{2}\|\vu - \vyh\|^2,$ $\vyb = \argmin_{\vu \in \cc}\ell(\vu),$ and let $\vy$ be such that $\ell(\vy) - \ell(\vyb) \leq \epsilon^{\ell}.$ Then:
  $$
        \frac{1}{\eta + \sigma}\|\tG_{\eta + \sigma}^{\sigma}(\vyh) - G_{\eta + \sigma}^{\sigma}(\vyh)\|^2 \leq 2 \epsilon^{\ell},
  $$
  where $\tG_{\eta + \sigma}^{\sigma}(\vyh) = (\eta + \sigma)(\vyh - \vy),$ $G_{\eta + \sigma}^{\sigma}(\vyh) = (\eta + \sigma)(\vyh - \vyb)$.
\end{proposition}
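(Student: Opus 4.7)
The plan is to exploit the strong convexity of $\ell$, which, as the sum of a linear function and a quadratic with Hessian $(\eta+\sigma)I$, is $(\eta+\sigma)$-strongly convex on $\cc$. The proposition is really just a quadratic-growth bound on $\ell$ translated into a bound on $\|\vyb - \vy\|$, and from there the conclusion is immediate.

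First I would record the algebraic identity
\[
    \tG_{\eta+\sigma}^{\sigma}(\vyh) - G_{\eta+\sigma}^{\sigma}(\vyh) = (\eta+\sigma)(\vyh - \vy) - (\eta+\sigma)(\vyh - \vyb) = (\eta+\sigma)(\vyb - \vy),
\]
so that $\|\tG_{\eta+\sigma}^{\sigma}(\vyh) - G_{\eta+\sigma}^{\sigma}(\vyh)\|^2 = (\eta+\sigma)^2\|\vyb - \vy\|^2$. Dividing by $\eta+\sigma$, the target inequality reduces to showing $\|\vyb - \vy\|^2 \leq \dfrac{2\epsilon^{\ell}}{\eta+\sigma}$.

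Next I would invoke strong convexity of $\ell$ at its minimizer: since $\vyb = \argmin_{\vu \in \cc}\ell(\vu)$ and $\ell$ is $(\eta+\sigma)$-strongly convex, the first-order optimality condition $\innp{\nabla \ell(\vyb), \vy - \vyb} \geq 0$ combined with the strong convexity inequality gives
\[
    \ell(\vy) \geq \ell(\vyb) + \innp{\nabla\ell(\vyb), \vy - \vyb} + \tfrac{\eta+\sigma}{2}\|\vy - \vyb\|^2 \geq \ell(\vyb) + \tfrac{\eta+\sigma}{2}\|\vy - \vyb\|^2.
\]
Using the hypothesis $\ell(\vy) - \ell(\vyb) \leq \epsilon^{\ell}$, this yields $\|\vyb - \vy\|^2 \leq \dfrac{2\epsilon^{\ell}}{\eta+\sigma}$, which combined with the identity above completes the proof.

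There is essentially no obstacle here; the only subtlety is to make sure to use the constrained optimality of $\vyb$ (i.e., invoking the first-order condition over $\cc$ to absorb the inner-product term), rather than treating $\nabla\ell(\vyb)=0$ which would only hold if the minimizer were interior. Everything else is a one-line computation.
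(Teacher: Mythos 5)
Your proof is correct and follows essentially the same route as the paper's: reduce the claim to $\|\vy - \vyb\|^2 \le 2\epsilon^{\ell}/(\eta+\sigma)$ via the identity $\tG_{\eta+\sigma}^{\sigma}(\vyh) - G_{\eta+\sigma}^{\sigma}(\vyh) = (\eta+\sigma)(\vyb - \vy)$, then invoke the $(\eta+\sigma)$-strong convexity of $\ell$ at its constrained minimizer. You merely spell out the quadratic-growth step (including the first-order optimality condition over $\cc$) that the paper compresses into ``as $\ell$ is strongly convex and by definition of $\vy$.''
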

\begin{proof}
By definition of $\tG_{\eta + \sigma}^{\sigma}(\vyh), G_{\eta + \sigma}^{\sigma}(\vyh)$, 
$$
    \|\tG_{\eta + \sigma}^{\sigma}(\vyh) - G_{\eta + \sigma}^{\sigma}(\vyh)\| = (\eta + \sigma)\|\vy - \vyb\|.
$$
As $\ell$ is strongly convex and by definition of $\vy,$
$$
    \frac{\eta + \sigma}{2}\|\vy - \vyb\|^2 \leq \ell(\vy) - \ell(\vyb) \leq \epsilon^{\ell}.
$$
Thus, $\|\tG_{\eta + \sigma}^{\sigma}(\vyh) - G_{\eta + \sigma}^{\sigma}(\vyh)\| \leq (\eta + \sigma)\sqrt{\frac{2 \epsilon^{\ell}}{\eta + \sigma}}$, as claimed.
\end{proof}

It is always possible to obtain an upper estimate of the strong convexity parameter $m$ of $f.$ To do so, we can pick two arbitrary points $\vx, \vy,$ $\vy \neq \vx,$ from the feasible set $\cc$. By strong convexity,
$$
    f(\vy) \geq \innp{\nabla f(\vx), \vy - \vx} + \frac{m}{2}\|\vy - \vx\|^2.
$$
Thus, we can use the estimate:
\begin{equation}\label{eq:init-sigma}
    \sigma_0 = \frac{2(f(\vy) - f(\vx) - \innp{\nabla f(\vx), \vy - \vx})}{\|\vy - \vx\|^2} \geq m. 
\end{equation}
Note that by smoothness of $f,$ we also have $\sigma_0 \leq L.$ 
We will assume throughout this section that the algorithm is started with such an estimate. 

Further, we assume that the algorithm can be started with points $\vx_0, \vy_0$ such that 
$$
\ell_0(\vy_0) - \min_{\vu\in \cc}\ell_0(\vu) \leq \frac{1}{8(\eta_0 + \sigma_0)} \|\tG_{\eta_0 + \sigma_0}^{\sigma_0}(\vx_0)\|^2 = \frac{\eta_0 + \sigma_0}{8}\|\vy_0 - \vx_0\|^2.
$$ 
This can be achieved by running a minimization procedure for $\ell_0$ and halting it when the estimated optimality gap at its current iterate $\vy_{0, k}$ is lower than $\frac{\eta_0 + \sigma_0}{8}\|\vy_0^k - \vx_0\|^2$, and outputting $\vy_0 = \vy_{0, k}.$ For implementing a minimization procedure for $\ell_0$ in our context, see~\cite{diakonikolas2019lacg}. Note also that, as $f_{\sigma_0}(\vx_0) = f(\vx_0),$ we have $G_{\eta_0 + \sigma_0}^{\sigma_0}(\vx_0) = G_{\eta_0 + \sigma_0}(\vx_0).$ 

We now present the pseudocode for one call to the accelerated algorithm, between restarts.

\begin{algorithm}
  \caption{ACC($\vx_0, \eta_0, \sigma$)}\label{algo:ACC}
  \begin{algorithmic}[1]
    \State $\sigma = 2 \sigma$
    \Repeat 
    \State $\sigma = \sigma / 2$
    \State Run a minimization procedure for $\ell_0(\vu) = \innp{\nabla f(\vx_0), \vu - \vx_0} + \frac{\eta_0 + \sigma}{2}\|\vu - \vx_0\|^2.$ Halt when the current iterate $\vy$ of the procedure satisfies $\ell_0(\vy) - \min_{\vu \in \cc}\ell_0(\vu) \leq \epsilon_0,$ where $\epsilon_0 = \frac{\eta_0 + \sigma}{32}\|\vy_0 - \vx_0\|^2 = \frac{1}{32(\eta_0 + \sigma)}\|\tG_{\eta_0 + \sigma}(\vx_0)\|^2.$ 
    \State Set $\vyh_0 = \mathbf{v}_0 = \vy_0$; $\vz_0 = (\eta_0 + \sigma)\vx_0 - \nabla f(\vx_0)$
    \State $a_0 = A_0 = 1$
        \Repeat
            \State $k = k + 1$
            \State $\eta_k, A_k, \vz_k, \mathbf{v}_k, \vyh_k, \vy_k, \tG_{\eta_k + \sigma}^{\sigma}(\vyh_k) = \mathrm{AGD-Iter}(\vy_{k-1}, \mathbf{v}_{k-1}, \vz_{k-1}, A_{k-1}, \eta_{k-1}, \sigma, \epsilon_0, \eta_0)$
        \Until{$\frac{1}{\eta_k + \sigma}\|\tG_{\eta_k + \sigma}^{\sigma}(\vyh_k)\|^2 \leq \frac{9\epsilon_0}{4}$}
    \Until{$\frac{\sigma}{\sqrt{\eta_k + \sigma}}\|\vyh_k - \vx_0\| \leq \sqrt{\epsilon_0}$}
    \State\Return $\vyh_k, \eta_k \sigma$
  \end{algorithmic}
\end{algorithm}

%%%%%%%%%%%%%%%%%%%
\thmACC*
\begin{proof}
The proof outline is as follows. We focus on one call to Algorithm~\ref{algo:ACC} to prove that it halves the value of $\frac{1}{\eta + \sigma}\|G(\vx^{\mathrm{out}})\|^2$ within $k = O\Big(\sqrt{\frac{L}{\sigma}}\log\Big(\frac{L}{m}\Big)\Big)$ iterations. The stated bound then follows by showing that $\sigma$ cannot be halved by more than $O(\log(\frac{L}{m}))$ times (cumulatively over the entire restarted algorithm run). Note that we have already argued in Lemma~\ref{lemma:ACC-inner} that the total number of times that $\eta_k$ can get doubled is $O(\log(\frac{\eta_k}{\eta_0})) = O(\log(\frac{L}{m})),$ this bound holds cumulatively over the entire (restarted) algorithm run, and is absorbed in the bound on $K$ from the statement of the theorem.

Let us start by bounding the total number of iterations in the inner repeat-until loop of Algorithm~\ref{algo:ACC}. To keep the notation simple, we will use the same notation as in the pseudocode for Algorithm~\ref{algo:ACC}. As the algorithm is started with $\sigma \leq \eta_0$ and in any iteration $\eta_k$ can be only increased while $\sigma$ can only be decreased, we have that $\theta_k = \sqrt{\frac{\sigma}{2(\eta_k + \sigma)}}\leq \frac{1}{2}.$ Thus, $\epsilon_k^{\ell} \leq \frac{\epsilon_0}{8}.$ From Proposition~\ref{prop:grad-map-approx}, 
\begin{equation}\label{eq:ACC-grad-map-err}
    \frac{1}{\sqrt{\eta_k + \sigma}}\|\tG_{\eta_k + \sigma}^{\sigma}(\vyh_k) - G_{\eta_k + \sigma}^{\sigma}(\vyh_k)\| \leq \sqrt{2\epsilon_0^k} \leq \frac{\sqrt{\epsilon_0}}{2}. 
\end{equation}
Suppose that $\frac{1}{\eta_k + \sigma}\|G_{\eta_k + \sigma}^{\sigma}(\vyh_k)\|^2 \leq \epsilon_0.$ Then, Eq.~\eqref{eq:ACC-grad-map-err} implies that it must be
$$
    \frac{1}{\eta_k + \sigma}\|\tG_{\eta_k + \sigma}^{\sigma}(\vyh_k)\| \leq \frac{9\epsilon_0}{4},
$$
and the inner repeat-until loop reaches its exit condition. Applying Lemma~\ref{lemma:ACC-inner}, this happens within
\begin{equation}
    k = O\bigg(\sqrt{\frac{L}{\sigma}}\log\Big(\frac{\sqrt{L}\|\vx^*_{\sigma} - \vx_0\|}{\sqrt{\epsilon_0}}\Big)\bigg)
\end{equation}
iterations. To further bound $k,$ we have by Proposition~\ref{prop:opt-gap-ub-via-grad-map} and $(\sigma+m)$ strong convexity of $f_{\sigma}$ that
$$
    \frac{\sigma + m}{2}\|\vx^*_{\sigma} - \vx_0\|^2 \leq f_{\sigma}(\vx_0) - f_{\sigma}(\vx^*_{\sigma}) \leq \frac{1}{2(\sigma + m)}\|G_{m+\sigma}^{\sigma}(\vx_0)\|^2 \leq \frac{1}{2(\sigma + m)}\|G_{\eta_0 + \sigma}(\vx_0)\|^2,
$$
where the last inequality follows by $\eta_0 \geq m$ and $G_{\eta_0 + \sigma}^{\sigma}(\vx_0) = G_{\eta_0 + \sigma}(\vx_0)$. Thus,
\begin{equation}\label{eq:ACC-init-dist}
    \|\vx^*_{\sigma} - \vx_0\| \leq \frac{1}{\sigma + m}\|G_{\eta_0 + \sigma}(\vx_0)\|.
\end{equation}
By the definition of $\epsilon_0$ is Algorithm~\ref{algo:ACC} and Proposition~\ref{prop:grad-map-approx}, 
\begin{equation}\label{eq:ACC-init-gm-error}
    \frac{1}{\sqrt{\eta_0 + \sigma}}\|G_{\eta+\sigma}(\vx_0) - \tG_{\eta+\sigma}(\vx_0)\|\leq \sqrt{2\epsilon_0},
\end{equation}
and, thus,
$$
    \frac{1}{\sqrt{\eta_0 + \sigma}}\|G_{\eta+\sigma}(\vx_0)\| \leq \frac{1}{\sqrt{\eta_0 + \sigma}}\|\tG_{\eta+\sigma}(\vx_0)\| + \sqrt{2\epsilon_0} = 3\sqrt{2{\epsilon_0}}.
$$
Thus, Eq.~\eqref{eq:ACC-init-dist} implies 
\begin{equation}\label{eq:ACC-init-dist-2}
     \|\vx^*_{\sigma} - \vx_0\| \leq \frac{\sqrt{\eta_0 + \sigma}}{\sigma + m}3\sqrt{2{\epsilon_0}} = O\Big(\frac{\sqrt{L}}{m}\sqrt{\epsilon_0}\Big),
\end{equation}
which leads to 
$$
    k = O\bigg(\sqrt{\frac{L}{\sigma}}\log\Big(\frac{L}{m}\Big)\bigg).
$$
To complete the proof, it remains to:
\begin{itemize}
    \item Argue that $\sigma \geq c m$ throughout the algorithm run, for some absolute constant $c > 0$; note that as initially $\sigma \in [m, L]$ and $\sigma$ only gets halved (but never increased), the total number of times that the outer repeat-until loop is accessed beyond the first pass is $O(\log(\frac{L}{m})),$ which, combined with the bound on the number of iterations $k$ above, gets absorbed by the stated bound on the total iteration count; 
    \item Argue that each call to Algorithm~\ref{algo:ACC} reduces the value of $\frac{1}{{\eta_k + \sigma}}\|G_{\eta_k + \sigma}(\vyh_k)\|^2$ by a constant factor. Then, the total number of calls to Algorithm~\ref{algo:ACC} until $\|G_{\eta_k + \sigma}(\vyh_k)\| \leq \epsilon$ is $O(\log(\frac{\|G_{\eta_0 + \sigma_0}(\vx_0)\|}{\epsilon}))$.
\end{itemize}

To argue about the lower bound on $\sigma,$ let us bound the value of $\|\vyh_k - \vx_0\|.$ Applying triangle inequality,
\begin{equation}\label{eq:ACC-yhk-x_0-1}
    \|\vyh_k - \vx_0\| \leq \|\vyh_k - \vx^*_{\sigma}\| + \|\vx^*_{\sigma} - \vx_0\| \leq \|\vyh_k - \vx^*_{\sigma}\| + \frac{\sqrt{\eta_0 + \sigma}}{\sigma + m}3\sqrt{2{\epsilon_0}}.
\end{equation}
Using strong convexity of $f_{\sigma}$ and applying Proposition~\ref{prop:opt-gap-ub-via-grad-map}, similar to what we did for bounding $\|\vx^*_{\sigma} - \vx_0\|,$ we have
$$
    \|\vyh_k - \vx^*_{\sigma}\| \leq \frac{1}{\sigma + m}\|G_{\eta_k + \sigma}\|\leq \frac{2\|\tG_{\eta_k + \sigma}(\vyh_k)\| + \sqrt{(\eta_k + m)\epsilon_0}}{2(\sigma + m)} \leq \frac{2\sqrt{(\eta_k + m)\epsilon_0}}{\sigma + m},
$$
where the last two inequalities are by Eq.~\eqref{eq:ACC-grad-map-err} and the exit condition of the inner repeat-until loop in Algorithm~\ref{algo:ACC}. Combining with Eq.~\eqref{eq:ACC-yhk-x_0-1}, we have
\begin{equation}\label{eq:ACC-yhk-x_0-2}
    \|\vyh_k - \vx_0\| \leq \frac{(2\sqrt{\eta_k + m} + 3\sqrt{2}\sqrt{\eta_0 + \sigma})}{\sigma + m}\sqrt{\epsilon_0}.
\end{equation}
It follows that, as $\eta_k \geq \eta_0,$
\begin{align*}
    \frac{\sigma}{\sqrt{\eta_k + \sigma}}\|\vyh_k - \vx_0\| \leq \frac{5\sqrt{2}\sigma}{\sigma + m}\sqrt{\epsilon_0},
\end{align*}
which is bounded above by $\sqrt{\epsilon_0}$ for $\sigma \leq \frac{m}{5\sqrt{2} -1},$ and, thus, satisfies the exit condition in the outer repeat-until loop in Algorithm~\ref{algo:ACC}. As $\sigma$ can never decrease by more than a factor of 2, we have that it always holds that $\sigma \geq \frac{m}{2(5\sqrt{2} -1)},$ completing the proof that $\sigma$ is bounded below by a constant factor times $m.$

For the remaining part of the proof, using the definition of a gradient mapping and $f_{\sigma}(\vx) = f(\vx) + \frac{\sigma}{2}\|\vx - \vx_0\|^2,$ we have
\begin{align*}
    \|G_{\eta_k + \sigma}(\vyh_k) - G_{\eta_k + \sigma}^{\sigma}(\vyh_k)\| &= (\eta_k + \sigma)\Big\|P_{\cc}\Big(\vyh_k - \frac{1}{\eta_k + \sigma}\nabla f(\vyh_k)\Big) - P_{\cc}\Big(\vyh_k - \frac{1}{\eta_k + \sigma}\nabla f_{\sigma}(\vyh_k)\Big)\Big\|\\
    &\leq \|\nabla f(\vyh_k) - \nabla f_{\sigma}(\vyh_k)\|\\
    & = \sigma\|\vyh_k - \vx_0\|,
\end{align*}
where we have used the fact that the projection operator is non-expansive. Now, using Eq.~\eqref{eq:ACC-grad-map-err} and the exit condition in the inner repeat-until loop in Algorithm~\ref{algo:ACC}, we have
\begin{align*}
    \frac{1}{\sqrt{\eta_k + \sigma}}\|G_{\eta_k + \sigma}(\vyh_k)\| &\leq \frac{1}{\sqrt{\eta_k + \sigma}}\|G_{\eta_k + \sigma}^{\sigma}(\vyh_k)\| + \frac{\sigma}{\sqrt{\eta_k + \sigma}}\|\vyh_k - \vx_0\|\\
    &\leq \frac{1}{\sqrt{\eta_k + \sigma}}\|\tG_{\eta_k + \sigma}^{\sigma}(\vyh_k)\| + \frac{\sqrt{\epsilon_0}}{2} + \frac{\sigma}{\sqrt{\eta_k + \sigma}}\|\vyh_k - \vx_0\|\\
    &\leq 3\sqrt{\epsilon_0},
\end{align*}
where the last inequality uses $\frac{\sigma}{\sqrt{\eta_k + \sigma}}\|\vyh_k - \vx_0\| \leq \sqrt{\epsilon_0},$ which holds by the exit criterion of the repeat-until loop. Using the definition of $\epsilon_0,$ we further have
\begin{align*}
    \frac{1}{\sqrt{\eta_k + \sigma}}\|G_{\eta_k + \sigma}(\vyh_k)\| &\leq \frac{3}{\sqrt{32}\sqrt{\eta_0 + \sigma}}\|\tG_{\eta_0 + \sigma}(\vx_0)\|.
\end{align*}
Applying Eq.~\eqref{eq:ACC-init-gm-error} and the definition of $\epsilon_0,$ a simple calculation leads to $\|\tG_{\eta_0 + \sigma}(\vx_0)\| \leq \frac{1}{1 - 1/\sqrt{32}}\|G_{\eta_0 + \sigma}(\vx_0)\|$, and we can finally conclude
\begin{align*}
    \frac{1}{\sqrt{\eta_k + \sigma}}\|G_{\eta_k + \sigma}(\vyh_k)\| &\leq \frac{3}{\sqrt{32}-1\sqrt{\eta_0 + \sigma}}\|G_{\eta_0 + \sigma}(\vx_0)\| \leq \frac{1}{\sqrt{2}}\|G_{\eta_0 + \sigma}(\vx_0)\|.
\end{align*}
Thus, each call to Algorithm~\ref{algo:ACC} halves $\frac{1}{\eta_k + \sigma}\|G_{\eta_k + \sigma}(\vx_k^{\mathrm{out}})\|,$ and the total number of calls until $\|G_{\eta_k + \sigma}(\vx_k^{\mathrm{out}})\|\leq \epsilon$ is bounded by $O(\log(\frac{L}{m}\frac{\|G_{\eta_0 + \sigma_0}\|}{\epsilon})),$ leading to the bound from the statement of the theorem.
\end{proof}

\section{Coupling AFW with Acceleration} \label{appx:section:PFLaCG}

In this section, we prove how the AFW algorithm (Algorithm~\ref{alg:fafw}) can be coupled with the ACC algorithm (Algorithm~\ref{algo:ACC}) to achieve an algorithm, dubbed the Parameter-Free Locally accelerated Conditional Gradients (PF-LaCG), that achieves an optimal convergence rate in primal gap (up to poly-logarithmic factors), this is formalized in Theorem~\ref{thm:appx:main}.

\begin{theorem}\label{thm:appx:main}
  Let $\cx \in \rr^n$ be a closed convex polytope of diameter $D$, and let $f:\rr^n \to \rr$ be a function that is $L$-smooth and $m$-strongly convex on $\cx.$ Let Assumptions~\ref{defn:delta-scaling} and~\ref{assumption:strictComplementarity} be satisfied for $f, \cx$. Denote $\vx^{*} = \argmin_{\vx \in \cx}f(\vx).$ Given $\epsilon > 0,$ let $\vx^{\out}$ be the output point of PF-LaCG (Algorithm~\ref{algo:PF_LaCG}), initialized at an arbitrary vertex $\vx_0$ of $\cx$. Then $w(\vx^{\out}, \cs^{\out}) \leq \epsilon$ and PF-LaCG uses a total of at most
  \begin{align*}
        K = O\bigg(\min\bigg\{& \log \left( \frac{w (\vx_0, \cs_0)}{LD^2} \right) + \frac{LD^2}{m \delta^2} \log \left( \frac{w (\vx_0, \cs_0)}{\epsilon} \right), K_0 + K_1 + \sqrt{\frac{L}{m}}\log\left(\frac{L}{m}\right)\log\Big(\frac{LD}{m\delta}\Big)\log\left(\frac{LD}{\epsilon}\right)\bigg\}\bigg)
  \end{align*}
  queries to the FOO for $f$ and the LMO for $\cx$, where 
  $$
    K_0 = \frac{32LD^2}{m \delta(\cx)^2\ln 2} \log \left( \frac{2 w(\vx_0, \cs_0)}{\min\{ ( \tau /(2D(\sqrt{L/m} + 1)))^2/L, \tau, LD^2, 2 w_c \}}\right),$$
  and $K_1 = \frac{128 LD^2}{m\delta^2}$. 
\end{theorem}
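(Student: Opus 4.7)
The proof will establish each of the two bounds inside the outer minimum separately; the theorem then follows because PF-LaCG's restart logic guarantees that its output is at least as good as the one produced by the purely non-accelerated path. For the first (non-accelerated) bound, I would appeal directly to the AFW guarantees already recalled in Appendix~\ref{appx:FAFW}: Proposition~\ref{prop:fafw-burn-in} takes care of the initial $O(\log(w(\vx_0,\cs_0)/(LD^2)))$ iterations needed to reach the regime $w/4 \le C_f^A \le LD^2$, after which Proposition~\ref{prop:fafw-halve-rate} combined with Fact~\ref{lemma:optgap-le-strwolfegap} yields a geometric halving of $w(\vx,\cs)$ with at most $128\,LD^2/(m\delta^2)$ iterations per halving. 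Totalling $\log_2(w(\vx_0,\cs_0)/\epsilon)$ halvings produces the first term of the minimum. Crucially, the restart rule in Line~\ref{Ln:PF-LaCG-ACC-update} never discards an AFW iterate that is the current best: whenever $w^{\AFW} \le \min\{w^{\ACC}, w^{\ACC}_{\mathrm{prev}}/2\}$ the output is set to the AFW iterate, so the AFW-only trajectory upper-bounds the PF-LaCG query count up to constants.

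For the locally accelerated bound, the plan is to split the run into a burn-in phase of length $K_0 + K_1$ and an accelerated phase. Theorem~\ref{theorem:appx:no-more-drop-steps} gives that after $K_0$ AFW iterations the iterate $\vx^{\AFW}$ satisfies $\vx^{\AFW} \in \mathcal{F}(\vx^*)$ and $\vx^* \in \co(\cs^{\AFW})$, and by Proposition~\ref{prop:fafw-halve-rate} a restart of PF-LaCG occurs within at most $K_1 = 128\,LD^2/(m\delta^2)$ further iterations. At that restart one of the two branches of Line~\ref{Ln:PF-LaCG-ACC-update} reseeds ACC with an active set containing $\vx^*$ (either directly from the AFW side, or implicitly because ACC is already dominating and its own support has inherited the invariant via a previous reseed). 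After this point the feasible set of ACC, namely $\co(\cs^{\ACC})$, contains the true optimum, so that $\argmin_{\vx \in \co(\cs^{\ACC})} f(\vx) = \vx^*$ and ACC can be analysed via Theorem~\ref{thm:ACC-full} with the same $L, m$ as the original problem. Invoking that theorem on the sub-polytope gives an $\|G_{\eta+\sigma}(\vx^{\ACC})\| \le \epsilon'$ guarantee in $O(\sqrt{L/m}\log(L/m)\log((L/m)\|G_{\eta_0+\sigma_0}\|/\epsilon'))$ queries, without any problem-parameter knowledge.

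The remaining step, and the main technical hurdle, is to convert the gradient-mapping guarantee into a guarantee on $w(\vx,\cs)$. Here I would use the second claim of Theorem~\ref{theorem:appx:no-more-drop-steps}, which asserts $w(\vx,\cs) \le LD\sqrt{2(f(\vx)-f(\vx^*))/m}$ for iterates in $\mathcal{F}(\vx^*)$ with $\vx^* \in \co(\cs)$, combined with the standard primal-gap bound $f(\vx)-f(\vx^*) \le \|G_{\eta+\sigma}(\vx)\|^2/(2(\eta+\sigma))$ implied by Proposition~\ref{prop:opt-gap-ub-via-grad-map} on $\co(\cs^{\ACC})$. Chaining the two inequalities shows that driving $\|G_{\eta+\sigma}(\vx^{\ACC})\|$ below $\Theta(m\epsilon/(LD))$ suffices to achieve $w(\vx^{\ACC},\cs^{\ACC}) \le \epsilon$, which substitutes $\epsilon' = \Theta(m\epsilon/(LD))$ into the accelerated rate and produces the claimed $\sqrt{L/m}\log(L/m)\log(LD/(m\delta))\log(LD/\epsilon)$ term. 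The last bookkeeping issue is to show that running ACC and AFW iteration-aligned in parallel, together with the $O(1)$-per-restart coupling in Algorithm~\ref{algo:PF_LaCG}, inflates the total oracle count only by a constant; this follows since each AGD-Iter and each AFW iteration issues $O(1)$ FOO and LMO calls and the total number of PF-LaCG restarts over the whole run is $O(\log(w(\vx_0,\cs_0)/\epsilon))$, already dominated by the accelerated term.
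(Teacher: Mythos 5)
Your overall skeleton matches the paper's proof: the first branch of the minimum via the AFW guarantees (Propositions~\ref{prop:fafw-burn-in} and~\ref{prop:fafw-halve-rate}) together with the observation that $w^{\out}\leq w^{\AFW}$, the burn-in bound $K_0+K_1$ via Theorem~\ref{theorem:appx:no-more-drop-steps} and Proposition~\ref{prop:fafw-halve-rate}, and the conversion between the gradient mapping and $w(\vx,\cs)$ via the chain $w(\vx,\cs)\leq LD\sqrt{2(f(\vx)-f(\vx^*))/m}$ and $f(\vx)-f(\vx^*)\leq \frac{1}{2m}\|G_{\eta+\sigma}(\vx)\|^2$ (note: Proposition~\ref{prop:opt-gap-ub-via-grad-map} gives the factor $\frac{1}{2m}$, not $\frac{1}{2(\eta+\sigma)}$ as you wrote, though your resulting threshold $\epsilon'=\Theta(m\epsilon/(LD))$ is the correct one).

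The genuine gap is in how you handle the accelerated phase. You invoke Theorem~\ref{thm:ACC-full} once, as a black box on the sub-polytope, and dismiss the coupling as ``bookkeeping'' that only inflates the oracle count by a constant. But the coupling is not inert: at every PF-LaCG restart where $w^{\AFW}\leq\min\{w^{\ACC},w^{\ACC}_{\mathrm{prev}}/2\}$, Line~\ref{Ln:PF-LaCG-ACC-update} \emph{overwrites} $\vx^{\ACC}$ and $\cs^{\ACC}$ with the AFW iterate and active set, destroying ACC's internal state (its momentum sequence, $A_k$, and its current estimates), so the convergence guarantee of Theorem~\ref{thm:ACC-full} does not carry across such reseeds and a single end-to-end invocation is not valid. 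The paper instead analyzes each period between two successive PF-LaCG restarts separately: it shows via the two-sided bounds $\frac{\|G_{\eta+\sigma}(\vx)\|^2}{2(\eta+\sigma)}\leq 2L(w^{\ACC}_{\mathrm{prev}}/(m\delta))^2$ and $(m w^{\ACC}/(LD))^2\leq \|G_{\eta^++\sigma^+}(\vx^+)\|^2$ that $r^*=O(\log(LD/(m\delta)))$ calls to ACC suffice to halve $w^{\ACC}$, and then argues by cases (restart period longer or shorter than $r^*$) that a reseeding restart cannot slow this halving down. This per-period argument is exactly what produces the $\log(LD/(m\delta))$ factor in the stated bound; your sketch asserts the final complexity but the argument as written would not yield it, and without the two-case analysis it is not established that the reseeds preserve the accelerated rate at all. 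You should make this inter-restart analysis explicit to close the proof.
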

\begin{proof}
As the algorithm is specified so that we always have $w^{\out} = w(\vx^{\out}, \cs^{\out})$ and it terminates when $w^{\out} \leq \epsilon,$ it must be $w(\vx^{\out}, \cs^{\out}) \leq \epsilon$ when the algorithm terminates.

Observe that the algorithm monotonically decreases $w(\vx^{\out}, \cs^{\out})$ between restarts (by a factor of 2) and $w^{\out}$ is never larger than $w^{\AFW}$ at the end of a restart, as the algorithm sets $w^{\out} = \min\{w^{\AFW}, w^{\ACC}\}$. Further, AFW is run almost completely independently of ACC: its running iterate and the active set are updated to those of ACC only if $w^{\ACC} < w^{\AFW}$ and $|\cs^{\ACC}| \leq |\cs^{\AFW}|.$ Thus $w^{\AFW}$ decreases at least as fast as guaranteed by AFW, and the same holds for $w^{\out},$ as $w^{\out}\leq w^{\AFW}.$ This gives the first term in the bound on $K$ from the theorem statement.

By Theorem~\ref{theorem:no-more-drop-steps} we know that $w(\vx_k, S) \leq w_c$ for $k \geq K_0$, which means that every active set of AFW contains $\vx^*$ in its convex hull. As, due to Proposition~\ref{prop:fafw-halve-rate}, $w^{\AFW}$ is halved after at most $K_1 = \frac{128 LD^2}{m\delta^2}$ additional iterations, between iterations $K_0$ and $K_0 + K_1$, PF-LaCG enters the if branch from Line~\ref{Ln:PF-LaCG-restart}. Then if $w^{\ACC} \leq w_c$, we know that $\vx^* \in \co \left(S^{\ACC}\right)$. Otherwise, if $w^{\ACC} > w_c$, then the AFW algorithm will obtain a point such that $w^{\AFW} < w^{\ACC}$ and the ACC algorithm will get updated with $\vx^{\ACC} = \vx^{\AFW}$ and $\cs^{\ACC} = \cs^{\AFW}$. From this point on, it must be the case that $\vx^* \in \co \left(S^{\ACC}\right)$, as PF-LaCG can only update $\cs^{\ACC}$ to $\cs^{\AFW}$, and for $t\geq K_0$, we have $\vx^* \in \co \left( \cs^{\AFW}\right)$. It remains to argue that in the remaining iterations $w^{\out}$ is reduced at an accelerated rate. 

Let us consider what happens between two successive restarts of PF-LaCG. 
Suppose first that there are $r$ calls to ACC (Algorithm~\ref{algo:ACC}) in this time frame, and, to keep the notation simple, let $\vx$ and $\vx^+$ denote the output points of ACC at the beginning and at the end of the considered restart period of LaCG, and $(\eta, \sigma)$ and $(\eta^+, \sigma^+)$ denote their respective smoothness and strong convexity parameter estimates from ACC. For simplicity, we let $\vx^+$ coincide with the output point of the $r^{\mathrm{th}}$ call to ACC; if this were not the case, we could choose to output the point with the lower value of the gradient mapping between $\vx^+$ and the output point of the $r^{\mathrm{th}}$ call to ACC, and the same bound would hold. Theorem~\ref{thm:ACC-full} guarantees that
\begin{equation}\label{eq:ACC-gm-reduction-r-restarts}
    \frac{1}{\eta^+ + \sigma^+}\|G_{\eta^+ + \sigma^+}(\vx^+)\|^2 \leq \Big(\frac{1}{2}\Big)^r \frac{1}{\eta + \sigma}\|G_{\eta + \sigma}(\vx)\|^2. 
\end{equation}
By Proposition~\ref{prop:opt-gap-lb-via-grad-map}, if $\eta + \sigma \geq L,$ then $\frac{1}{2(\eta + \sigma)}\|G_{\eta + \sigma}(\vx)\|^2 \leq f(\vx) - f(\vx^*)$. Otherwise, using monotonicity of gradient mapping (see Proposition~\ref{prop:opt-gap-ub-via-grad-map}), $\frac{1}{2(\eta + \sigma)}\|G_{\eta + \sigma}(\vx)\|^2 \leq \frac{1}{2(\eta + \sigma)}\|G_{L}(\vx)\|^2 \leq \frac{L}{\eta + \sigma}(f(\vx) - f(\vx^*)).$ Either way, as $\eta \geq m$:
\begin{align}
    \frac{\|G_{\eta + \sigma}(\vx)\|^2}{2(\eta + \sigma)} &\leq \max\left\{1, L/(\eta + \sigma)\right\}(f(\vx) - f(\vx^*))\notag\\
    &\leq L  (f(\vx) - f(\vx^*)) / m \notag\\
    %\leq \frac{L}{m} \frac{2 (w^{\ACC}_{\mathrm{prev}})^2}{m \delta^2},
    &\leq 2L (w^{\ACC}_{\mathrm{prev}}/(m \delta))^2, \label{eq:opt-gap-ub-via-strong-wg}
\end{align}
where the last inequality is by $w(\vx) \leq w(\vx, \vs) = w^{\ACC}_{\mathrm{prev}}$. On the other hand, as $\eta^+ + \sigma^+ > m,$ Proposition~\ref{prop:opt-gap-ub-via-grad-map} gives $f(\vx^+) - f(\vx^*) \leq \frac{1}{2m}\|G_{\eta^+ + \sigma^+}(\vx^+)\|^2.$ Using the strong Wolfe gap bound in Theorem~\ref{theorem:no-more-drop-steps} it follows that

\begin{equation}\label{eq:opt-gap-lb-via-strong-wg}
   \frac{1}{\eta^+ + \sigma^+}\left(\frac{m w^{\ACC}}{LD} \right)^2  \leq \frac{2m}{\eta^+ + \sigma^+}(f(\vx^+) - f(\vx^*)) \leq \frac{1}{\eta^+ + \sigma^+}\|G_{\eta^+ + \sigma^+)}(\vx^+)\|^2.
\end{equation}

Combining Eqs.~\eqref{eq:ACC-gm-reduction-r-restarts}--\eqref{eq:opt-gap-lb-via-strong-wg}, we have
\begin{equation*}
    (w^{\ACC})^2 \leq \left(\frac{1}{2}\right)^{r} \frac{4 L^3 D^2 (\eta^+ + \sigma^+)}{m^4 \delta^2} (w^{\ACC}_{\mathrm{prev}})^2.
\end{equation*}
Thus, if $r \geq r^* = \log_2\left(\frac{16 L^3 D^2 (\eta^+ + \sigma^+)}{m^4 \delta^2}\right) = O\big(\log\big(\frac{LD}{m\delta}\big)\big),$ we have that $w^{\ACC} \leq \frac{1}{2}w^{\ACC}_{\mathrm{prev}}.$ The total number of iterations in this case is $r \cdot O\left(\sqrt{\frac{L}{m}}\log(\frac{L}{m})\right),$ due to Lemma~\ref{lemma:ACC-inner}. 

Observe that if $r \geq r^* + p$ for some $p \geq 1,$ then
\begin{equation}\notag%\label{eq:contractions-past-r*}
    (w^{\ACC})^2 \leq \left(\frac{1}{2}\right)^p (w^{\ACC}_{\mathrm{prev}})^2,
\end{equation}
and past the first $r$ calls to ACC, $w^{\ACC}$ halves on every other call to ACC, i.e., it contracts much faster than every $r^*$ iterations. 

We now argue that $w^{\ACC}$ halves at least as often as every 
$$
r^* \cdot O\left(\sqrt{\frac{L}{m}}\log\left(\frac{L}{m}\right)\right) = O\left(\sqrt{\frac{L}{m}}\log\left(\frac{L}{m}\right)\log\left(\frac{LD}{m\delta}\right)\right)
$$ 
iterations. To do so, we need to argue that a restart that updates $\vx^{\ACC}$ to $\vx^{\AFW}$ does not slow down the overall convergence. Suppose that there is such a restart. Then, by the condition from Line~\ref{Ln:PF-LaCG-ACC-update} of PF-LaCG, one of the following two situations must  occur. If the restart period (number of calls to ACC) was longer than $r^*,$ then $w^{\ACC}$ was contracting between restarts at least as fast as if ACC was run independently, and, as $w^{\AFW} \leq w^{\ACC}$ on restart and $w^{\ACC}$ is updated to $w^{\AFW}$, $w^{\ACC}$ must halve at least as frequently as every $r^*$ calls to ACC. If the restart period was shorter than $r^*,$ then, as $w^{\AFW} \leq w^{\ACC}_{\mathrm{prev}}/2$ and $w^{\ACC}$ is updated to $w^{\AFW},$ we get that in this case $w^{\ACC}$ is halved in fewer than $r^*$ calls to ACC. 

Finally, as $w^{\out} \leq w^{\ACC}$ and each iteration requires a constant number of calls to the FOO for $f$ and an LMO for $\cx,$ the claimed bound on $K$ follows.       
\end{proof}

%%%%%%%%%%%%%%%%%%%%%%%%%%%%%%%%%%%
\section{Computational Results} \label{appx:section:comp-results}

In this section we provide a complete overview of the implementation of PF-LaCG and a detailed comparison of the performance of PF-LaCG relative to other state of the art parameter-free algorithms. All the experiments in this paper were run on a Linux machine with an Intel Xeon Processor (Skylake, IBRS) and $64$ GB of RAM.

\subsection{Running ACC in Parallel with AFW}

One of the key computational advantages of PF-LaCG is that it allows local acceleration speedups through parallelism, guaranteeing nearly as least as much progress as state-of-the-art CG algorithms such as AFW and PFW in terms of wall-clock time. This is possible due to the way PF-LaCG is structured: between restarts where we use $w (\vx, \cs)$ as the measure of optimality, the ACC and the AFW algorithms are executed completely independently of each other while AFW checks whether ACC has made sufficient progress whenever AFW has halved $w(\vx, \cs)$ without interrupting ACC's execution. As such, PF-LaCG has the potential to utilize twice the computational power when compared to AFW by running the locally accelerated algorithm ACC on a secondary connected machine or on a separate process within a single machine. As a proof of concept in our experiments, we implement the parallelism of PF-LaCG using Python's \texttt{multiprocessing} library. We utilize its recently developed functionality \texttt{shared\_memory} to provide efficient inter-process communications whenever a restart happens and to orchestrate iteration synchronization through semaphores and locks. Moreover, in order to simulate the performance comparisons between PF-LaCG and other CG algorithms in the setting where PFLaCG has access to twice the computational power, we limit each process (note that PF-LaCG runs on two processes) to run on one virtualized CPU core through restricting the number of threads used by Intel's high-performance computing library \texttt{MKL} to one. In Section~\ref{Section:Appx:2coreperformance}, we also show how PF-LaCG outperforms other CG algorithms even when given equal amount of total computational power.

\subsection{Solving Minimization Subproblems in ACC and AGD-Iter}

Two of the key steps in the AGD-Iter algorithm are the approximate projections that need to be carried out in Algorithm~\ref{algo:ACC-iter}. We give a brief description of how these subproblems can be solved, in a very similar way as how they were solved in \citet[Section C.3]{diakonikolas2019lacg}. Both of these subproblems can be written without loss of generality as:
\begin{align}
    \mathbf{v} = \stackrel{\epsilon}{\sim}\argmin_{\vu \in \cc} \innp{\vz, \vu} + \norm{\vu}^2, \label{eq:appx:subproblem}
\end{align}
where $\cc$ is the convex hull of a known set of vertives, i.e., $\cc = \co \left( \cs \right)$, and $\stackrel{\epsilon}{\sim}\argmin$ is used to indicate that the objective function that follows the $\argmin$ is minimized to additive error $\epsilon$, as we described in the main body of the paper. We can write the subproblem shown in Eq.~\eqref{eq:appx:subproblem} as an equivalent problem over the unit probability simplex of dimension $\card{\cs}$, which we denote by $\Delta_{\card{\cs}}$, where $\card{\cs}$ is the cardinality of the set $\cs$. This allows us to write $\vu = \mathcal{V} \vlambda$, where $\mathcal{V}$ is the matrix whose columns are the elements of the set $\cs$ and $\vlambda \in \Delta_{\card{\cs}}$. This leads to $\mathbf{v} = \mathcal{V} \vlambda_{\mathbf{v}}$, where:
\begin{align}
    \vlambda_{\mathbf{v}} = \stackrel{\epsilon}{\sim}\argmin_{\vlambda \in \Delta_{\card{\cs}}} \innp{\vz, \mathcal{V} \vlambda} + \norm{\mathcal{V} \vlambda}^2. \label{eq:appx:subproblem-barycentric}
\end{align}
As noted earlier, because Euclidean projections onto $\Delta_{\card{\cs}}$ can be computed in closed-form with reasonable complexity, we can use accelerated projection-based methods to compute an $\epsilon$-optimal solution to Problem~\eqref{eq:appx:subproblem-barycentric} efficiently. In the code, we use the projections onto the probability simplex based on the \texttt{quicksort} algorithm with a worst-case complexity of $\mathcal{O}\left(\card{\cs} \log \left( \card{\cs} \right) \right)$ \cite{held1974validation, duchi2008efficient}. Note that there exist projections onto the simplex with worst-case complexity of $\mathcal{O}\left(\card{\cs}\right)$ using a variation of the \texttt{quicksort} algorithm that uses median-pivot partitionining \cite{condat2016fast}. However, we have used the projections onto the simplex using the standard \texttt{quicksort} algorithm due to its simplicity and the fact that there are fast and reliable implementations of the aforementioned sorting algorithm in Python.

In order to ensure that we reach an $\epsilon$-optimal solution to Problem~\eqref{eq:appx:subproblem-barycentric} we use one of the following two criteria:

\paragraph{\textbf{Frank-Wolfe gap stopping criterion. }} In order to compute the approximate solutions to Problem~\eqref{eq:appx:subproblem-barycentric} one could use the Frank-Wolfe gap as a stopping criterion, that is, we stop running the accelerated projection-based algorithm when $\vlambda_{\mathbf{v}}$ satisfies:
\begin{align}
    \max_{\vlambda \in \Delta_{\card{\cs}}} \innp{\mathcal{V}^T \vz + 2 \mathcal{V}^T \mathcal{V} \vlambda_{\mathbf{v}}, \vlambda_{\mathbf{v}} - \vlambda} \leq \epsilon \label{eq:appx:subproblem-FWgap}
\end{align}
Note that Problem~\eqref{eq:appx:subproblem-barycentric} is convex, and so the Frank-Wolfe gap provides a useful upper bound on the primal gap. Ensuring that the Frank-Wolfe gap is below the tolerance $\epsilon$ ensures that the primal gap is below the tolerance $\epsilon$ too. Moreover, note that the quantity $\mathcal{V}^T \vz + 2 \mathcal{V}^T \mathcal{V} \vlambda_{\mathbf{v}}$ is readily computed at each iteration, as it constitutes the gradient of the objective function at $\vlambda_{\mathbf{v}}$, and is used in the projection-based accelerated algorithm. Furthermore, solving a linear optimization problem like the one shown in Eq.~\eqref{eq:appx:subproblem-FWgap} over the probability simplex has complexity $\card{\cs}$, and so using the Frank-Wolfe as a stopping criterion does not add a noticeable overhead to the resolution of the subproblems.

\paragraph{\textbf{Gradient mapping stopping criterion.}}  Alternatively, one could use the norm of the gradient mapping for Problem~\eqref{eq:appx:subproblem-barycentric} %onto $\Delta_{\card{\cs}}$ 
to bound above the primal gap in the case where the objective function in the subproblem is strongly-convex. That is, if we denote the objective function being minimized in Eq.~\eqref{eq:appx:subproblem-barycentric} as $g(\vlambda)$, and we have that the smallest eigenvalue of the Hessian of $g(\vlambda)$, which we denote by $m$ for simplicity, is greater than zero, this means that if:
\begin{align*}
    \frac{m}{2} \norm{\vlambda_{\mathbf{v}} - P_{\Delta_{\card{\cs}}} \left(\vlambda_{\mathbf{v}} - \frac{1}{m} \nabla g(\vlambda_{\mathbf{v}})\right)}^2 \leq \epsilon,
\end{align*}
then the primal gap at $\vlambda_{\mathbf{v}}$ for Problem~\eqref{eq:appx:subproblem-barycentric} is also smaller than $\epsilon$. To compute this stopping criterion we require knowledge of the smallest eigenvalue of $\nabla^2 g(\vlambda) = 2 \mathcal{V}^T \mathcal{V}$, which is a quantity that is already computed in the projection-based accelerated gradient descent algorithms, as it is used to set the step size of the accelerated algorithms. Lastly, note that as we have mentioned above, there are efficient ways to compute closed-form projections onto the probability simplex $\Delta_{\card{\cs}}$, so computing $P_{\Delta_{\card{\cs}}} \left(\vlambda_{\mathbf{v}} - \frac{1}{m} \nabla g(\vlambda_{\mathbf{v}})\right)$ does not pose a high cost.

\begin{remark}[LLVM-enhanced subproblem solver]
%In order to speed-up the resolution of the subproblems in Eq.~\eqref{eq:appx:subproblem-barycentric} we 
We also use \texttt{Numba} \cite{lam2015numba}, a \emph{Just-In-Time} Python compiler that uses the LLVM compiler library that transforms the accelerated projection-based optimization algorithm to machine code, in order to more efficiently solve the subproblems from Eq.~\eqref{eq:appx:subproblem-barycentric}.
\end{remark}

\begin{remark}[Computing LMO]
For the problems without closed form solutions for LMO (such as structured LASSO and constrained matching), we use Scipy's \texttt{linprog} function to compute the LMO.
\end{remark}

\subsection{PF-LaCG over the Probability Simplex}
The structure of the unit probability simplex can give us deep insight into how the PF-LaCG algorithm works. Although it can be considered as a toy example, as projections onto this feasible region can be computed with a complexity equal to that of solving a linear program, it allows us to know exactly when the acceleration should kick in. Assume we are minimizing a smooth and strongly convex function $f(\vx)$ over the unit probability simplex $\Delta_n$ in $\rr^n$. As it is immediate to compute a proper support for any point $\vx\in \Delta_n$ (it suffices to find the non-zero elements in the vector $\vx$) we can easily find $\mathcal{F}\left( \vx^*\right)$ if we know $\vx^*$, or a high accuracy solution to the optimization problem. This enables us to pinpoint when the active set of the AFW algorithm is equal to the vertices of the optimal face, that is $\mathcal{S}_k = \vertex \left( \mathcal{F}\left( \vx^*\right)\right)$. We know from the proof of Theorem~\ref{thm:appx:main} that immediately following the restart after the iteration where we have that $\mathcal{S}_k = \vertex \left( \mathcal{F}\left( \vx^*\right)\right)$ we should observe that the ACC algorithm converges at an accelerated rate. 

We present in Figure~\ref{fig:simplex:Appx} a comparison of several CG algorithms minimizing a quadratic function over the unit probability simplex. The function being minimized in this example is $f(\vx) =  \vx^T \left( M^T M + \alpha \mathbf{1}_n \right)\vx/2 + \vb^T\vx$, where $M\in \rr^{n\times n}$ and $\vb\in \rr^{n}$ have entries sampled uniformly at random between $0$ and $1$ and $n = 10000$. The parameter $\alpha = 500$ is set so that the objective function satisfies $m \approx 500$. The resulting condition number is $L/m = 50000$, and the number of nonzero elements in $\vx^*$ is around $320$. The AFW algorithm in PF-LaCG (AFW) satisfies that $\vx^* \in \co \left( \cs_k \right)$ around iteration $400$, consequently we achieve the accelerated convergence rate from then onwards. The same can be said regarding PF-LaCG (PFW) around iteration $350$.

    \begin{figure*}[th!]
        \centering
        \hspace{\fill}
        \subfloat[$f(\vx_k) - f(\vx^*)$ vs iteration count]{{\includegraphics[width=7.35cm]{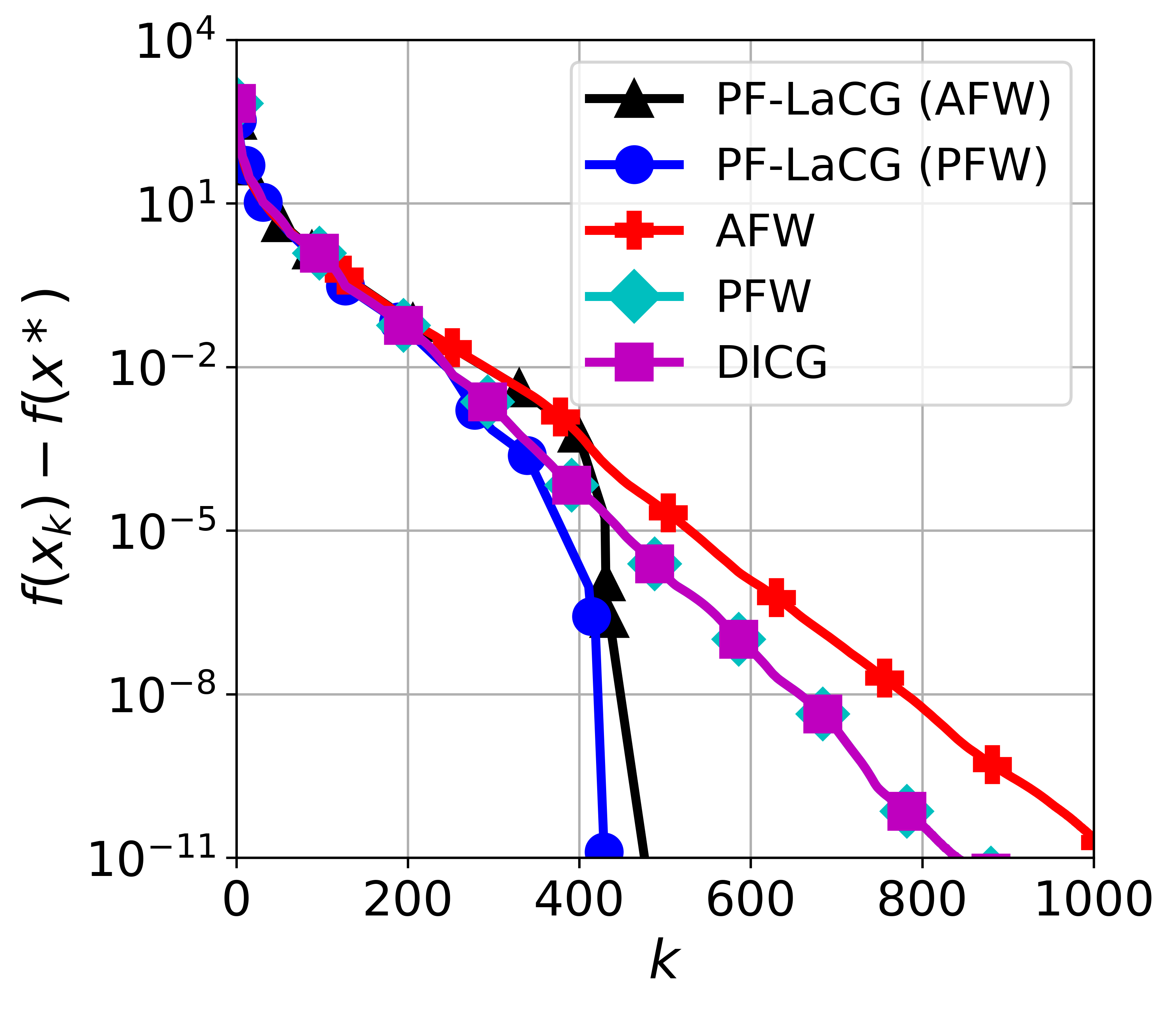} }\label{fig:simplexPGIt:Appx}}%
        %\qquad
        \hspace{\fill}
        \subfloat[$f(\vx_k) - f(\vx^*)$ vs time (seconds)]{{\includegraphics[width=7.35cm]{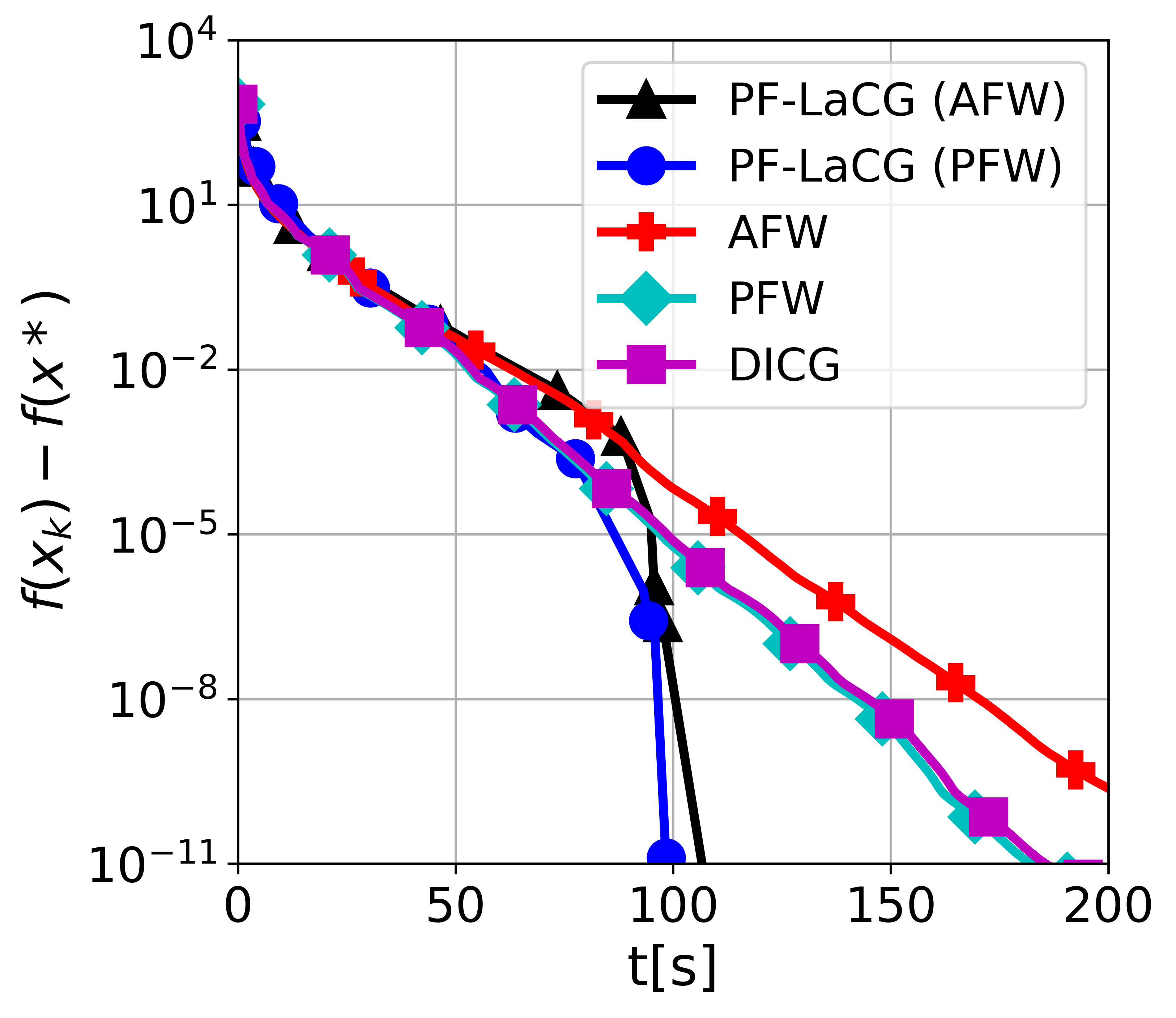} }\label{fig:simplexPGTime:Appx}}%
        \hspace{\fill}
        
        \bigskip 
        
        \hspace{\fill}
        \subfloat[$w(\vx_k, \cs_k)$ vs iteration count]{{\includegraphics[width=7.35cm]{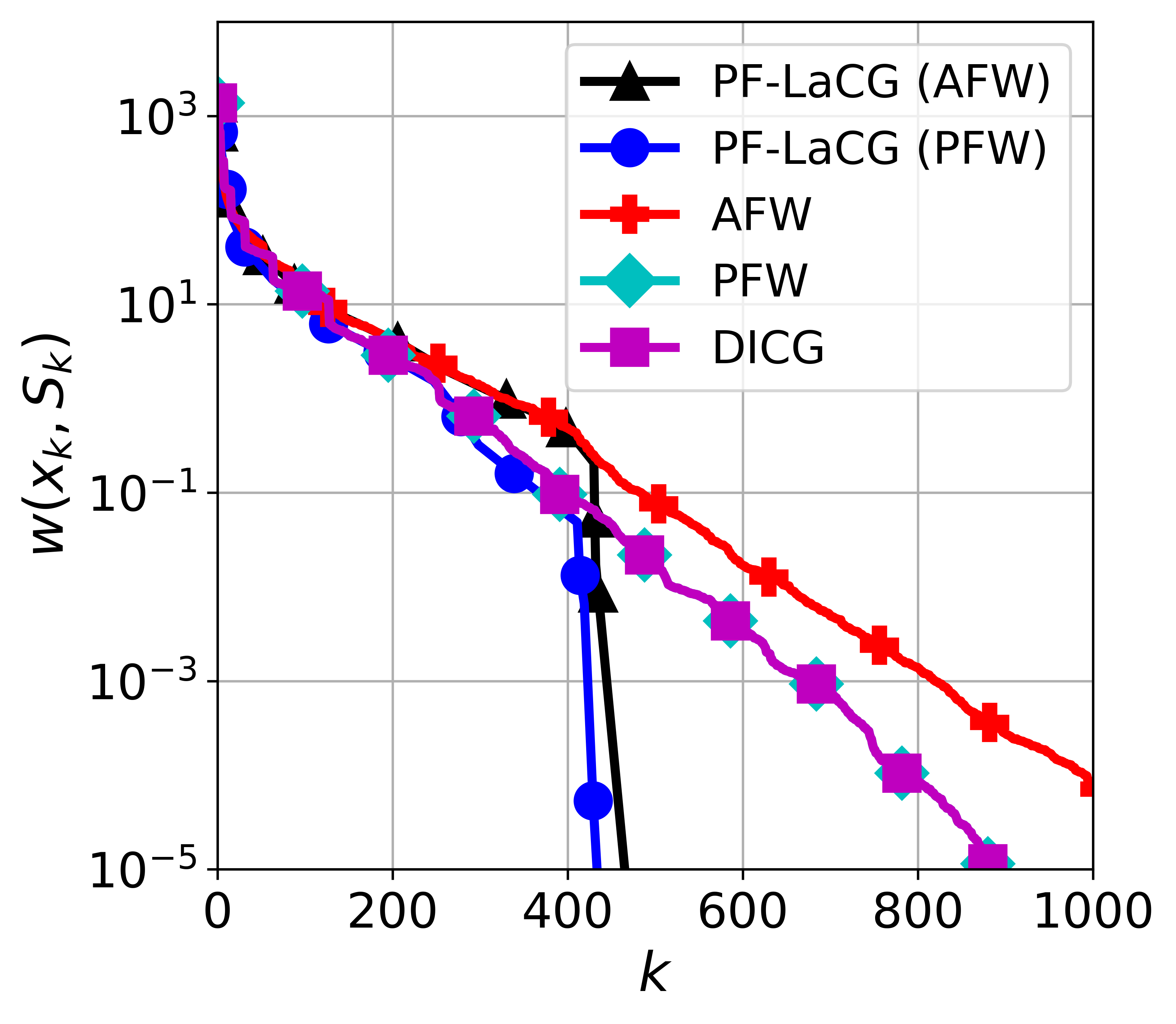} }\label{fig:simplexSWGIt:Appx}}%
        %\qquad
        \hspace{\fill}
        \subfloat[$w(\vx_k, \cs_k)$ vs time (seconds)]{{\includegraphics[width=7.35cm]{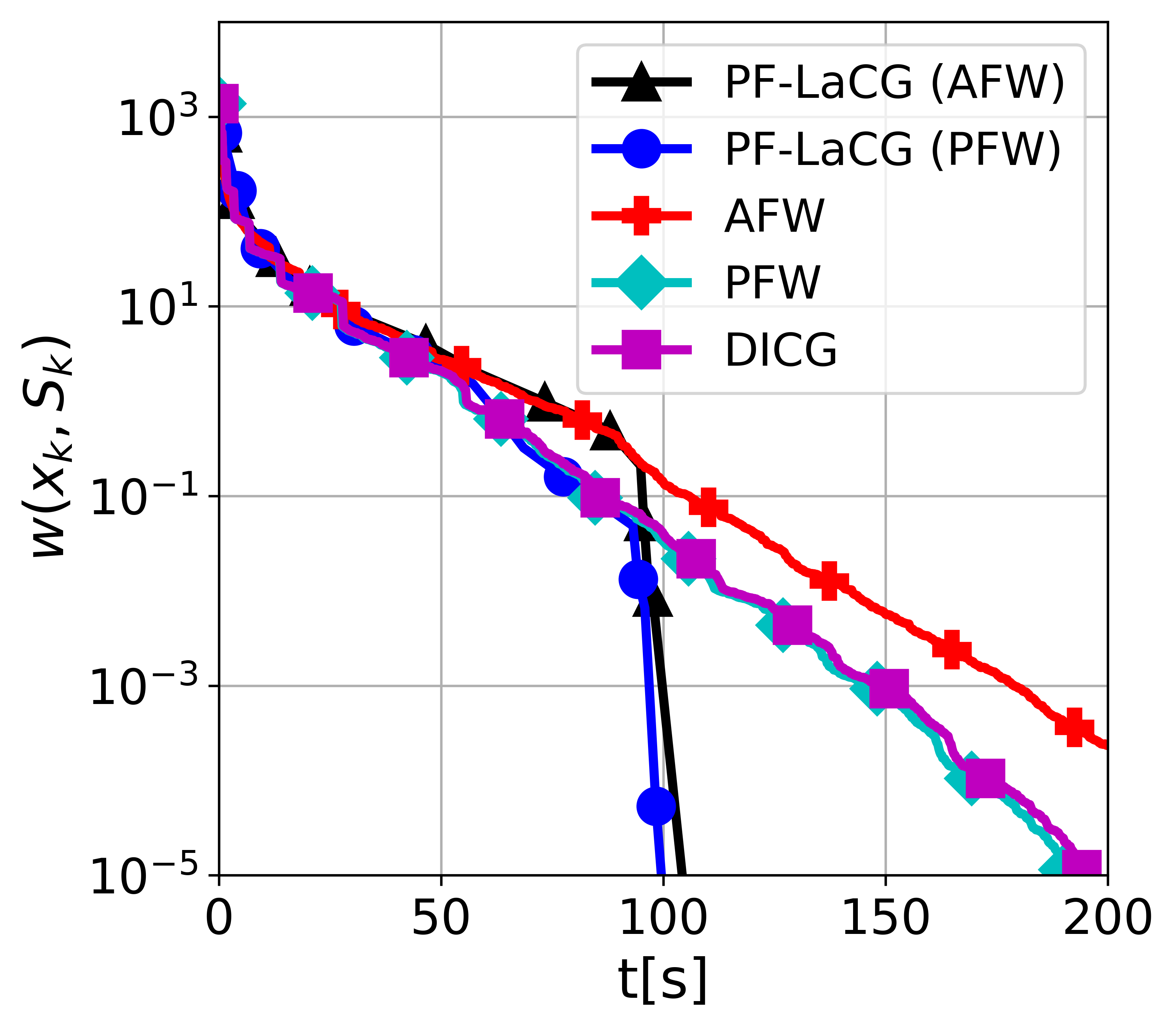} }\label{fig:simplexSWGTime:Appx}}%
        \hspace{\fill}
        \caption{\textbf{Strongly convex and smooth problem over the probability simplex: } Algorithm convergence in terms of $f(\vx_k) - f(\vx^*)$ and $w(\vx_k, \cs_k)$ versus iteration count $k$ and versus wall-clock time in seconds.}%
        \label{fig:simplex:Appx}%
    \end{figure*}

\begin{remark}[Using the structure of $\Delta_n$]

The structure of the unit probability simplex allows us to simplify the CG-variants employed in the comparison, as well as the PF-LaCG algorithm. Note that at each iteration the AFW and PFW algorithms (as well as the AFW algorithm) require maintaining an active set $\mathcal{S}_k \subseteq \vertex \left( \cx \right)$ that contains the vertices that give rise to the iterate $\vx_k$ as a convex combination. This active set is used to compute the \emph{away vertices} in the algorithms, denoted by $\vs_k$. In order to compute an away vertex one has to solve the linear optimization problem $\vs_k = \argmax_{\mathbf{v} \in \cs_k} \innp{\nabla f(\vx_k), \mathbf{v}}$, which typically requires looping through all the elements in $\cs_k$. This can become computationally expensive as $\card{\cs_k}$ grows. However, computing $\vs_k$ is extremely easy when $\vx_k \in \Delta_n$, as one need only look at the non-zero elements in $\vx_k$ to solve the linear program. Therefore no CG algorithm needs to explicitly maintain an active set when solving a problem over the probability simplex. This significantly reduces the running time of all the CG algorithms in our implementation. Note that the absence of an active set is one of the main advantages of the DICG algorithm \cite{garber2016linear} (aside from the notable fact that it is able to achieve a primal gap linear convergence guarantee that depends on the dimensionality of $\mathcal{F}\left( \vx^*\right)$, as opposed to $n$), which often allows it to outperform other CG variants when the feasible region $\cx$ is a $0-1$ polytope. In this example over the probability simplex, the PFW and DICG algorithms are equivalent, and we do not observe a significant advantage from using the DICG algorithm.

Moreover, if we focus on the problem shown in Eq.~\eqref{eq:appx:subproblem} we can make use of the fact that:
\begin{align*}
\argmin_{\vlambda \in \Delta_{\card{\cs}}} \innp{\vz, \mathcal{V} \vlambda} + \norm{\mathcal{V} \vlambda}^2 & = \argmin_{\vlambda \in \Delta_{\card{\cs}}} \innp{\mathcal{V}^T \vz, \vlambda} + \norm{ \vlambda}^2 \\
& = \argmin_{\vlambda \in \Delta_{\card{\cs}}} -2 \innp{-\frac{\mathcal{V}^T \vz}{2}, \vlambda} + \norm{ \vlambda}^2 + \norm{\frac{\mathcal{V}^T \vz}{2}}^2  \\
& = \argmin_{\vlambda \in \Delta_{\card{\cs}}} \norm{-\frac{\mathcal{V}^T \vz}{2} - \vlambda}^2.
\end{align*}
The last expression in the chain of equalities is nothing but the Euclidean projection of $-\frac{\mathcal{V}^T \vz}{2}$ onto $\Delta_{\card{\cs}}$, which as we have stated before, can be computed in closed-form efficiently. Moreover, we can recover the active set of any iterate by simply looking at the non-zero elements of the vector, allowing us to easily recover $\cs$ given $\vx$. We make use of this fact in our implementation of PF-LaCG over the probability simplex. This means that as the subproblems are solved to optimality in Algorithm~\ref{algo:ACC-iter} we do not require an accelerated algorithm, or a stopping criterion to solve the problems in Eq.~\ref{eq:appx:subproblem-barycentric}
\end{remark}

\subsection{PF-LaCG over Structured LASSO Regression Problems}

We give a brief motivation for the structured LASSO regression problem solved in Section~\ref{sec:compResults}. The \emph{Least Absolute Shrinkage and Selection Operator} (LASSO) \cite{tibshirani1996regression} is an immensely popular regression analysis that simultaneously performs variable selection and regularization to solve a linear regression problem. The formulation is intimately related to the \emph{Basis Pursuit Denoising} (BPD) \cite{chen1998atomic} problem in the signal processing community. One of the attractive properties of the LASSO is its ability to return sparse solutions that capture the variables that contribute the most towards producing an output. One of the domains in which CG-type algorithms have received attention is in sparse regression problems in physics \cite{carderera2020second}.

In many situations we can describe a physical system by the differential equations that govern the phenomenon. These equations allow us to compactly describe the current state of a physical system, or to predict its future state. However, in many situations we do not have any physics-informed differential equation models to describe a natural phenomenon, and we only have access to the state of the system at various times. Our goal then is to find this system of differential equations given some training data. That is, if we denote the state of the system at time $t$ by $\vx(t)$, we want to find $d \vx(t)/ d t = F(\vx(t)))$, where we assume that $F(\vx(t)))$ can be expressed as a linear combination of simple \emph{ansatz} functions (like polynomials) that belong to a dictionary $\mathcal{D} = \left\{ \psi_{i} \mid i\in [1,m] \right\}$, with $\psi_{i}: \rr^n \rightarrow \rr$. This allows us to write $F\left(\vx(t)\right) = \Xi^T \bm{\psi}(\vx(t))$ where $\Xi \in \rr^{m \times n}$ is a sparse matrix $\Xi = \left[\xi_1, \cdots, \xi_n \right]$ formed by column vectors $\xi_i \in \rr^m$ for $i \in [ 1, n ]$ and $\bm{\psi}(\vx(t)) = \left[ \psi_1(\vx(t)), \cdots, \psi_m(\vx(t)) \right]^T \in \rr^{m}$. Therefore the sparse matrix $\Xi$ allows us to reconstruct the differential equations that govern the system. If we are given a series of data points $\left\{\vx(t_i), d\vx(t_1)/dt\right\}_{i=1}^{r}$, in the absence of noise we will have:
\begin{align*}
\begin{bmatrix}
\vertbar & & \vertbar\\
d\vx(t_1)/dt & \cdots & d\vx(t_r)/dt\\
\vertbar & & \vertbar
\end{bmatrix} = 
\begin{bmatrix}
\horzbar & \xi_1 & \horzbar\\
 & \vdots & \\
\horzbar & \xi_n & \horzbar
\end{bmatrix} 
\begin{bmatrix}
\vertbar & & \vertbar\\
\bm{\psi}\left(\vx(t_1)\right) & \cdots & \bm{\psi}\left(\vx(t_r)\right)\\
\vertbar & & \vertbar
\end{bmatrix}.
\end{align*}
However, we are typically only given access to noise-corrupted measurements of $\left\{\vx(t_i)\right\}_{i=1}^{r}$, which means that we have access to some noisy $\left\{\vy(t_i)\right\}_{i=1}^{r}$, from which we have to estimate the derivatives $\left\{d \vy(t_i)/dt\right\}_{i=1}^{r}$. In the presence of this noise, and in pursuit of a sparse matrix $\Xi$, we can attempt to use the LASSO regression analysis for some $\alpha >0$, resulting in:
\begin{gather}
\argmin\limits_{\substack{ \norm{\Omega}_{1,1} \leq \tau  \\ \Omega \in \rr^{m \times n}}}  \norm{\dot{Y} - \Omega^T \Psi(Y)}^2_F. \label{eq:appx:LASSO}
\end{gather}
Where $\norm{\cdot}_{1,1}$ and $\norm{\cdot}_{F}$ represent the $\ell_{1,1}$ and Frobenius norm of a matrix,  and we have collected the data into matrices $\dot{Y} = \left[ d\vy(t_1)/dt,\cdots, d\vy(t_r) /dt\right] \in\rr^{n\times r}$ and  $\Psi\left(Y\right) = \left[ \bm{\psi}(\vx(t_1)),\cdots, \bm{\psi}(\vx(t_r))\right]\in\rr^{m\times r}$. Moreover, we can also try to impose physics-informed linear constraints on the problem shown in Eq.~\eqref{eq:appx:LASSO} to reflect symmetries or conservation properties in the system, in the hope that the learnt dynamics will generalize better to unseen data. This transforms the problem to:
\begin{gather}
\argmin\limits_{\Omega \in \cx}  \norm{\dot{Y} - \Omega^T \Psi(Y)}^2_F. \label{eq:appx:LASSO:constrained}
\end{gather}
with $\cx= \left\{ \Omega \in \rr^{m \times n} \mid \norm{\Omega}_{1,1} \leq \tau, \trace( A_l^T \Xi ) \leq b_l, l\in [ 1,L] \right\}$, $A_l \in \rr^{m\times n}$ and $b_l \in \rr$ for all $l \in [1,L]$, and where we have added $L$ additional linear constraints to the problem in Eq.~\eqref{eq:appx:LASSO} to reflect the underlying structure of the dynamical system that we want to impose. 

    \begin{figure*}[th!]
        \centering
        \hspace{\fill}
        \subfloat[$f(\vx_k) - f(\vx^*)$ vs iteration count]{{\includegraphics[width=7.35cm]{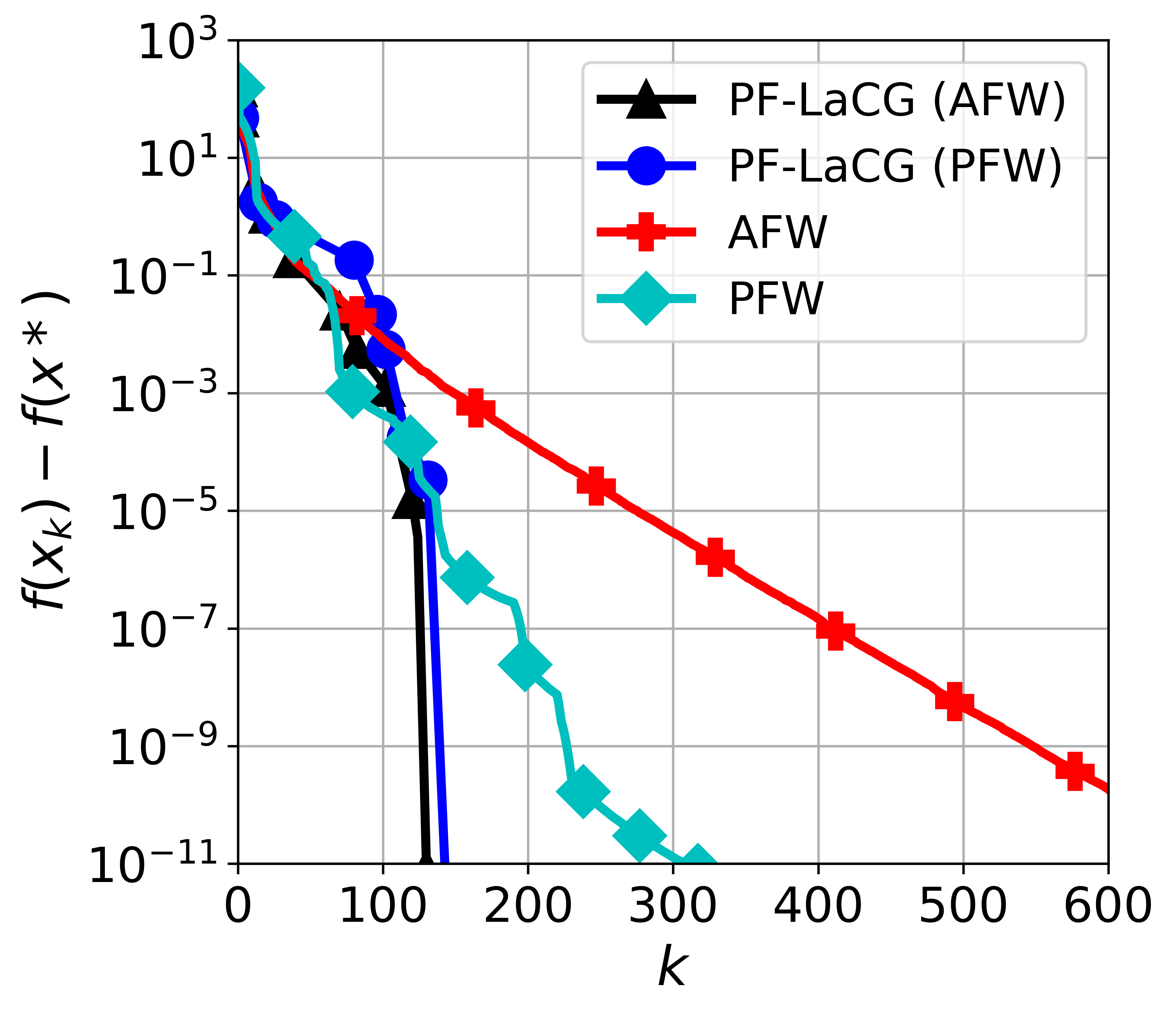} }\label{fig:LASSOPGIt:Appx}}%
        %\qquad
        \hspace{\fill}
        \subfloat[$f(\vx_k) - f(\vx^*)$ vs time (seconds)]{{\includegraphics[width=7.35cm]{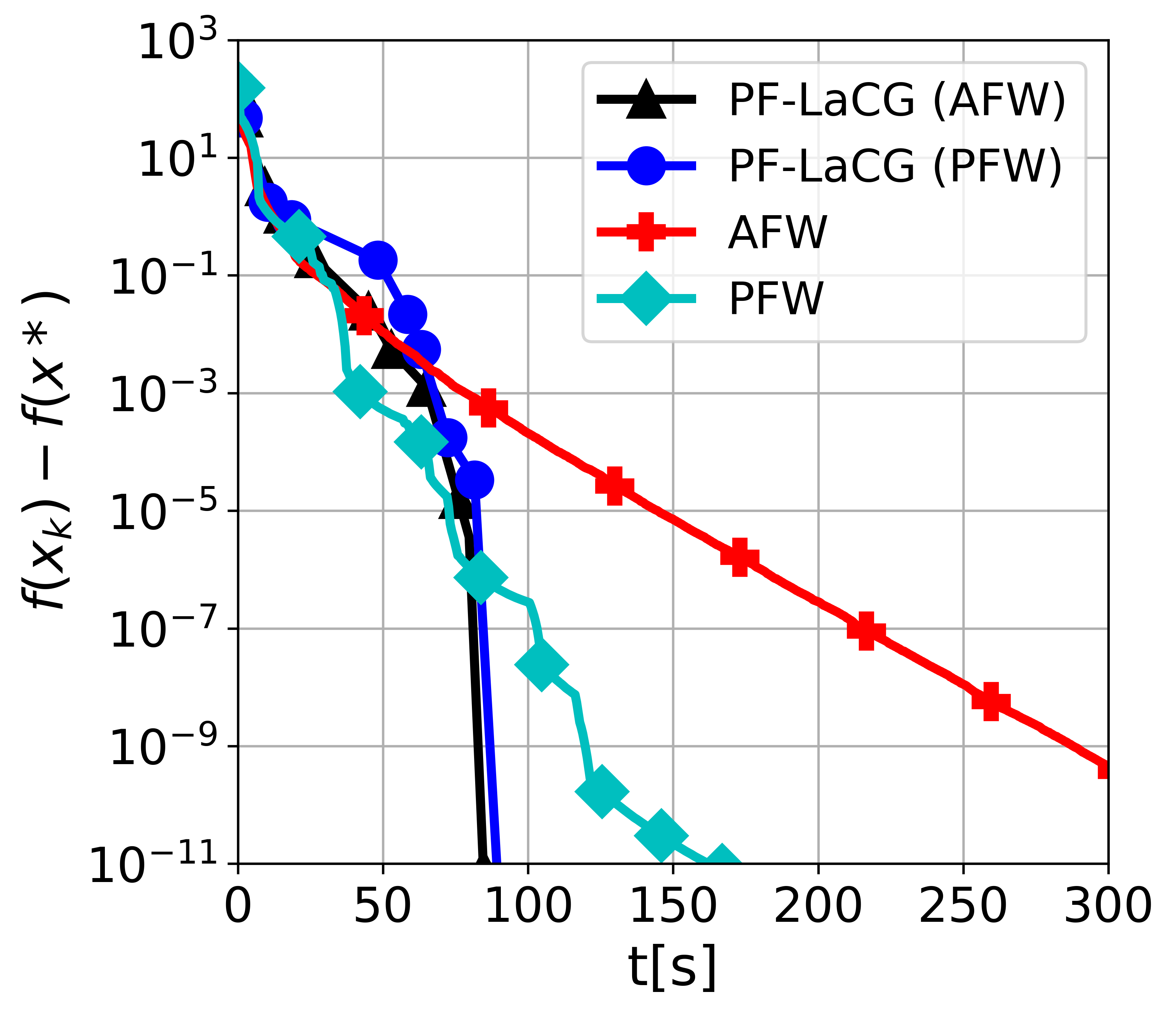} }\label{fig:LASSOPGTime:Appx}}%
        \hspace{\fill}
        
        \bigskip 
        
        \hspace{\fill}
        \subfloat[$w(\vx_k, \cs_k)$ vs iteration count]{{\includegraphics[width=7.35cm]{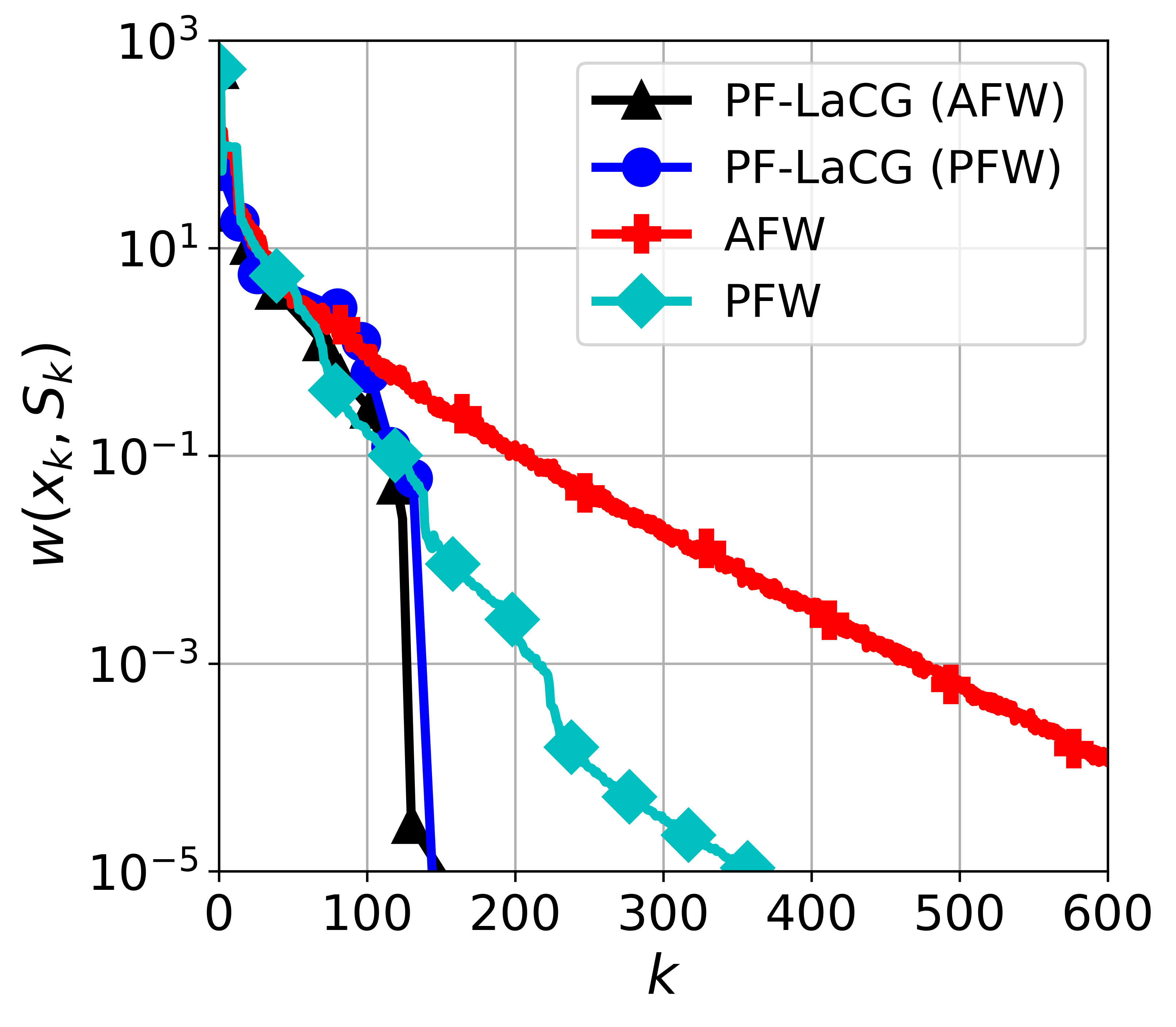} }\label{fig:LASSOSWGIt:Appx}}%
        %\qquad
        \hspace{\fill}
        \subfloat[$w(\vx_k, \cs_k)$ vs time (seconds)]{{\includegraphics[width=7.35cm]{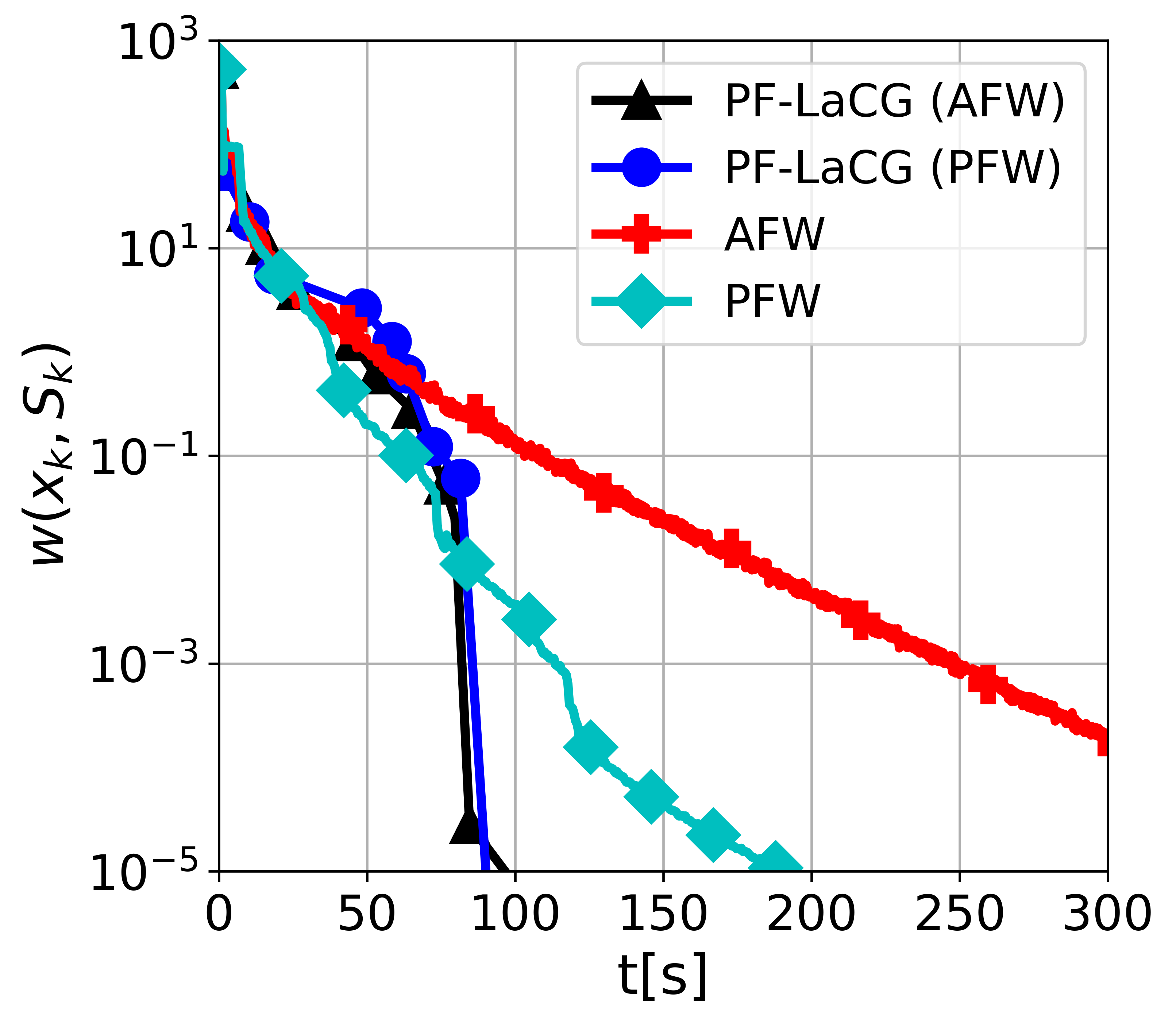} }\label{fig:LASSOSWGTime:Appx}}%
        \hspace{\fill}
        \caption{\textbf{Strongly convex and smooth problem over a structured LASSO domain: } Algorithm convergence in terms of $f(\vx_k) - f(\vx^*)$ and $w(\vx_k, \cs_k)$ versus iteration count $k$ and versus wall-clock time in seconds.}%
        \label{fig:LASSO:Appx}%
    \end{figure*}

We solve a stylized version of the problem in Eq.~\eqref{eq:appx:LASSO:constrained}, where the objective function is a quadratic $f(\vx) =  \vx^T \left( M^T M + \alpha \mathbf{1}_n \right)\vx/2 + \vb^T\vx$, where $M\in \rr^{n\times n}$ has entries sampled uniformly at random between $0$ and $1$, $\vb\in \rr^{n}$ has entries sampled uniformly at random from $0$ to $100$, $n = 1000$ and $\alpha = 100$. The resulting condition number of the function is $L/m = 250000$. The additional linear constraints we impose on the system are very similar to those used in \citet{carderera2021cindy}, with the exception here that we generate them at random. To generate the additional equality constraints, we sample $125$ pairs of distinct integers $(i,j)$ from $1\leq i,j \leq n$ without replacement, and we set $x_i = x_j$ for each pair, adding $125$ linear constraints. Lastly, the radius of the $\ell_1$ ball is set to $\tau = 1$. In this example, as the polytope is not a $0-1$ polytope, we cannot use the DICG algorithm of \citet{garber2016linear}. Despite the fact that we could resort to the more general decomposition invariant CG algorithm in \citet{bashiri2017decomposition}, we did not find it to be numerically comparable to the remaining CG algorithms tested in this section, and so have not included it in the comparison. The results obtained can be found in Figure~\ref{fig:LASSO:Appx}

\subsection{PF-LaCG over Constrained Matching Problems}

We also solve a matching-type problem over the intersection of the Birkhoff polytope and a set of additional linear constraints. The Birkhoff polytope in $\rr^{n \times n}$, also called the polytope of doubly-stochastic matrices, is the set of all square matrices whose columns and rows all sum up to 1. This polytope, with close ties to graph theory, is often used in matching problems. For example, if we interpret the rows of the matrix as workers, and the columns of the matrix as tasks that need to be completed, we can view the matrix element $A_{i,j}$ on the $i^{\text{th}}$ row and the $j^{\text{th}}$ column as being either $0$ or $1$, depending on if the $j^{\text{th}}$ task has been assigned to the $i^{\text{th}}$ worker (if $A_{i,j} = 1$), or if the $j^{\text{th}}$ task has not been assigned to the $i^{\text{th}}$ worker (if $A_{i,j} = 0$).

    \begin{figure*}[th!]
        \centering
        \hspace{\fill}
        \subfloat[$f(\vx_k) - f(\vx^*)$ vs iteration count]{{\includegraphics[width=7.35cm]{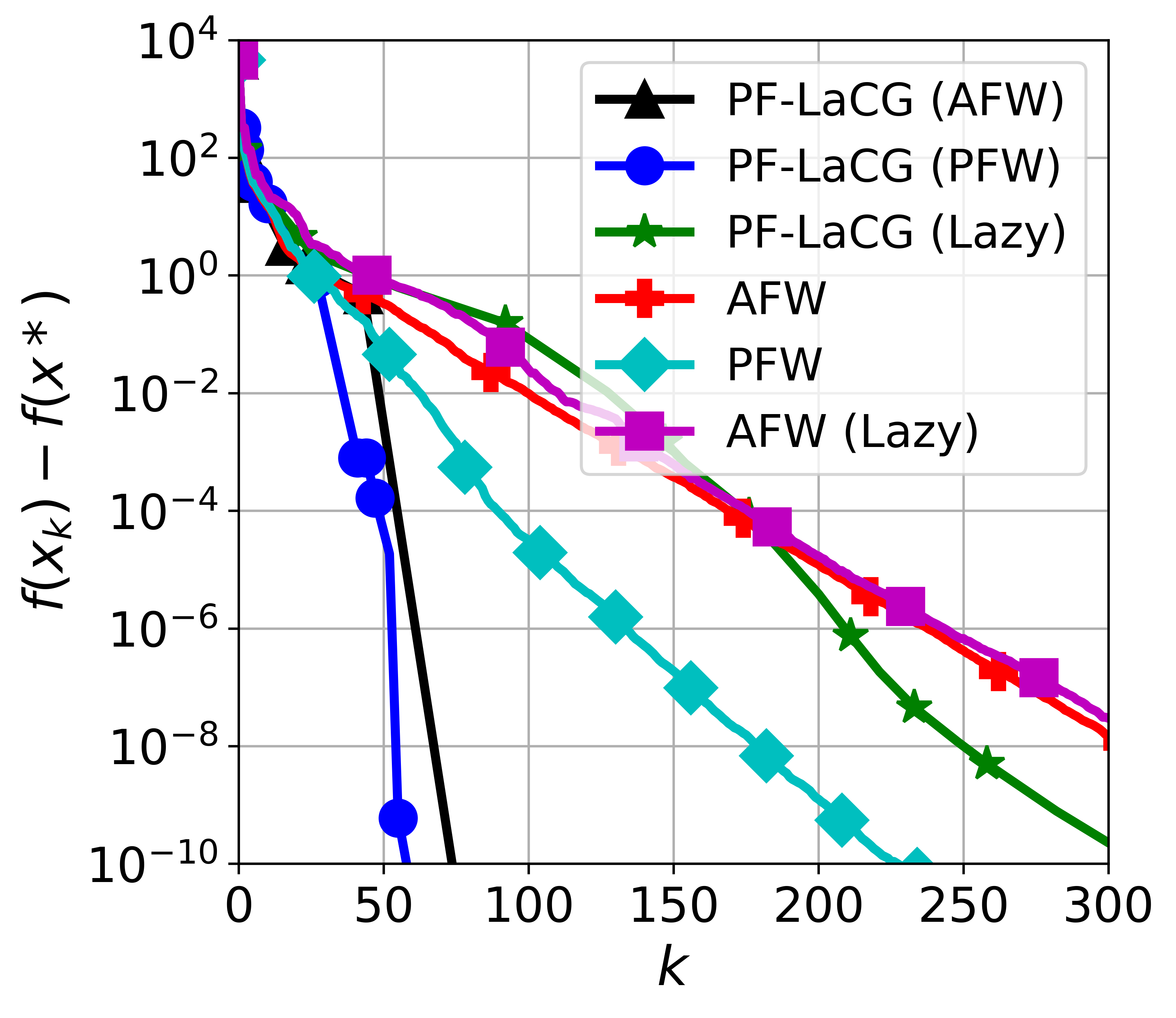} }\label{fig:matchingPGIt:Appx}}%
        %\qquad
        \hspace{\fill}
        \subfloat[$f(\vx_k) - f(\vx^*)$ vs time (seconds)]{{\includegraphics[width=7.35cm]{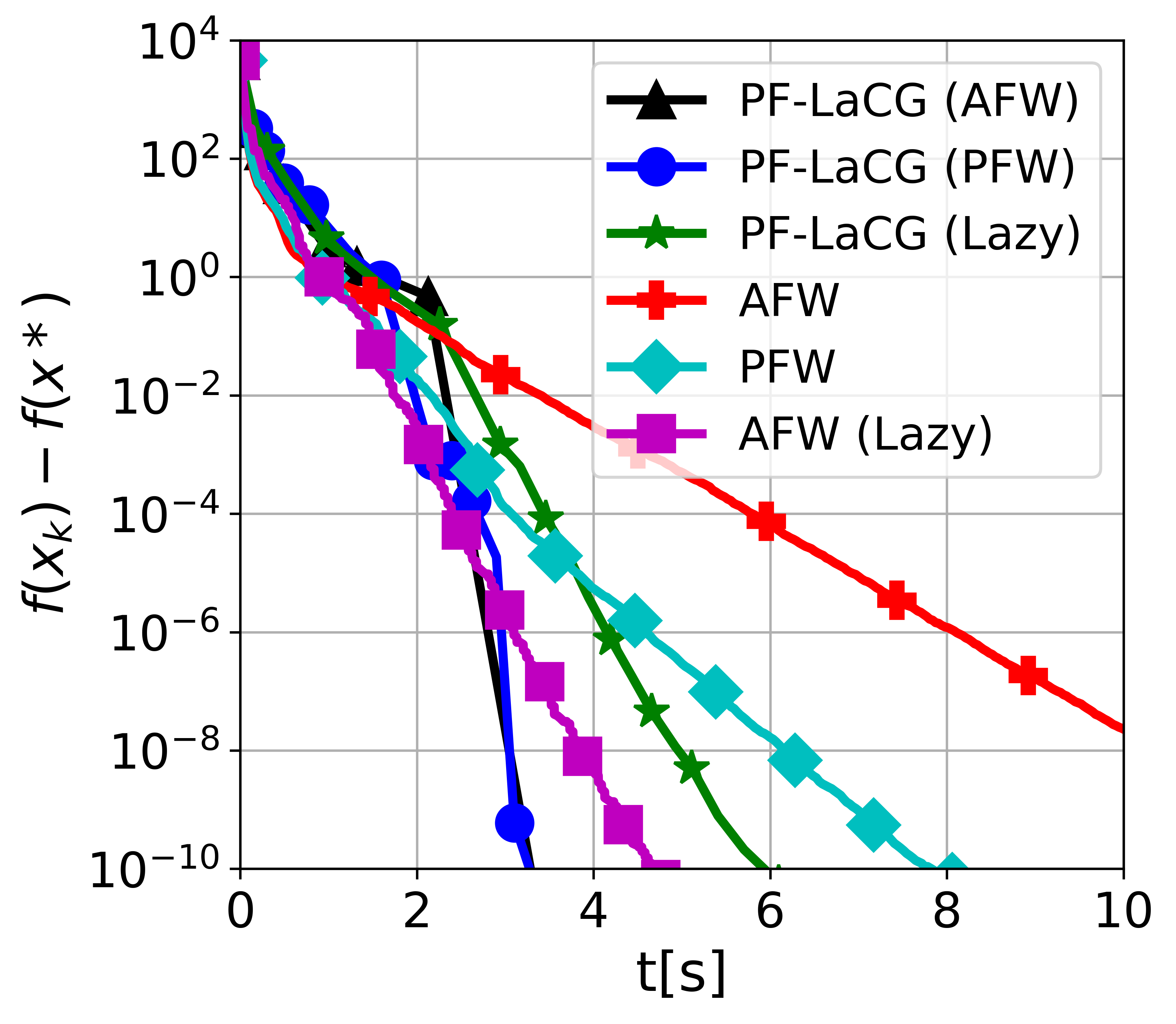} }\label{fig:matchingPGTime:Appx}}%
        \hspace{\fill}
        
        \bigskip 
        
        \hspace{\fill}
        \subfloat[$w(\vx_k, \cs_k)$ vs iteration count]{{\includegraphics[width=7.35cm]{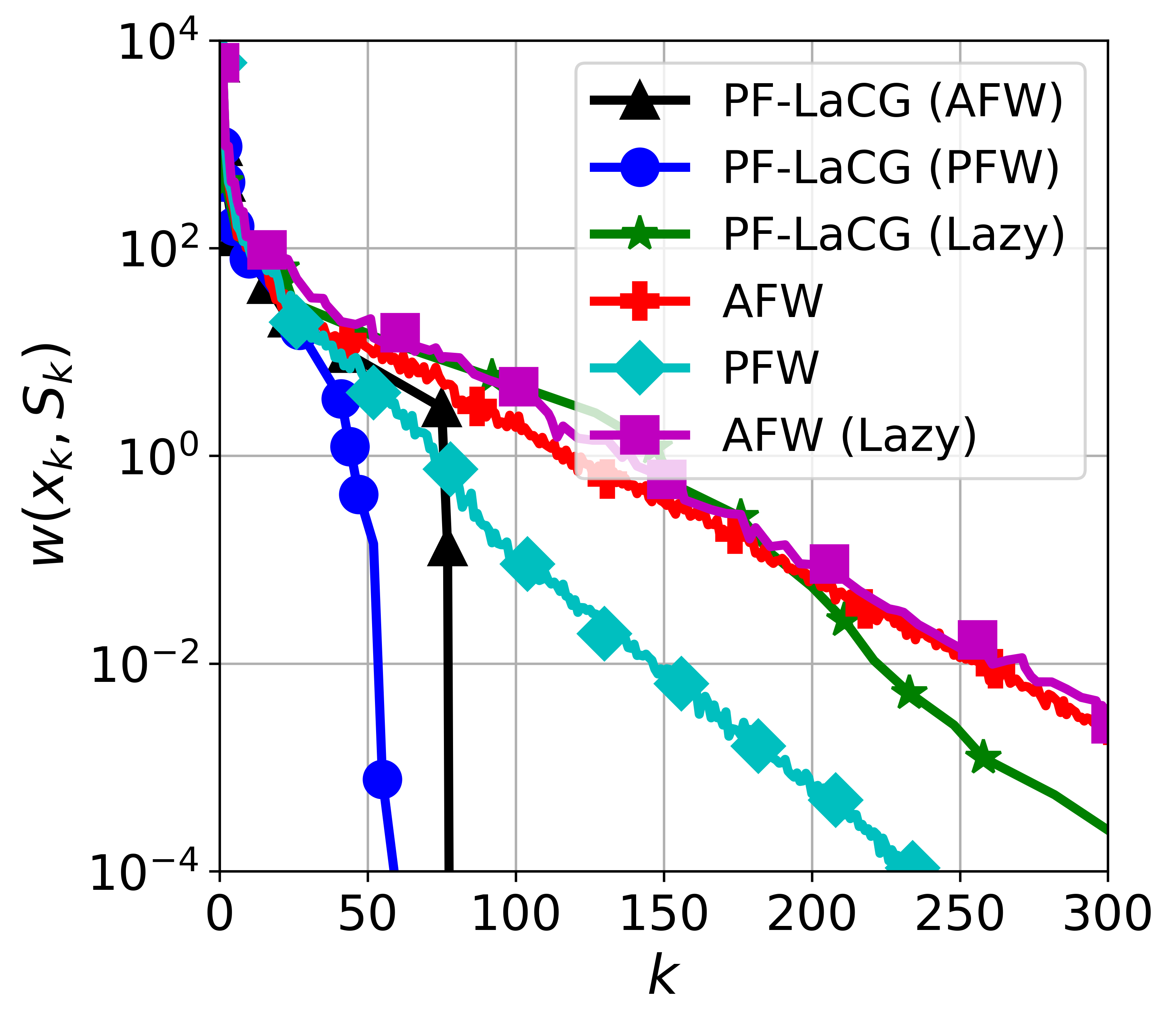} }\label{fig:matchingSWGIt:Appx}}%
        %\qquad
        \hspace{\fill}
        \subfloat[$w(\vx_k, \cs_k)$ vs time (seconds)]{{\includegraphics[width=7.35cm]{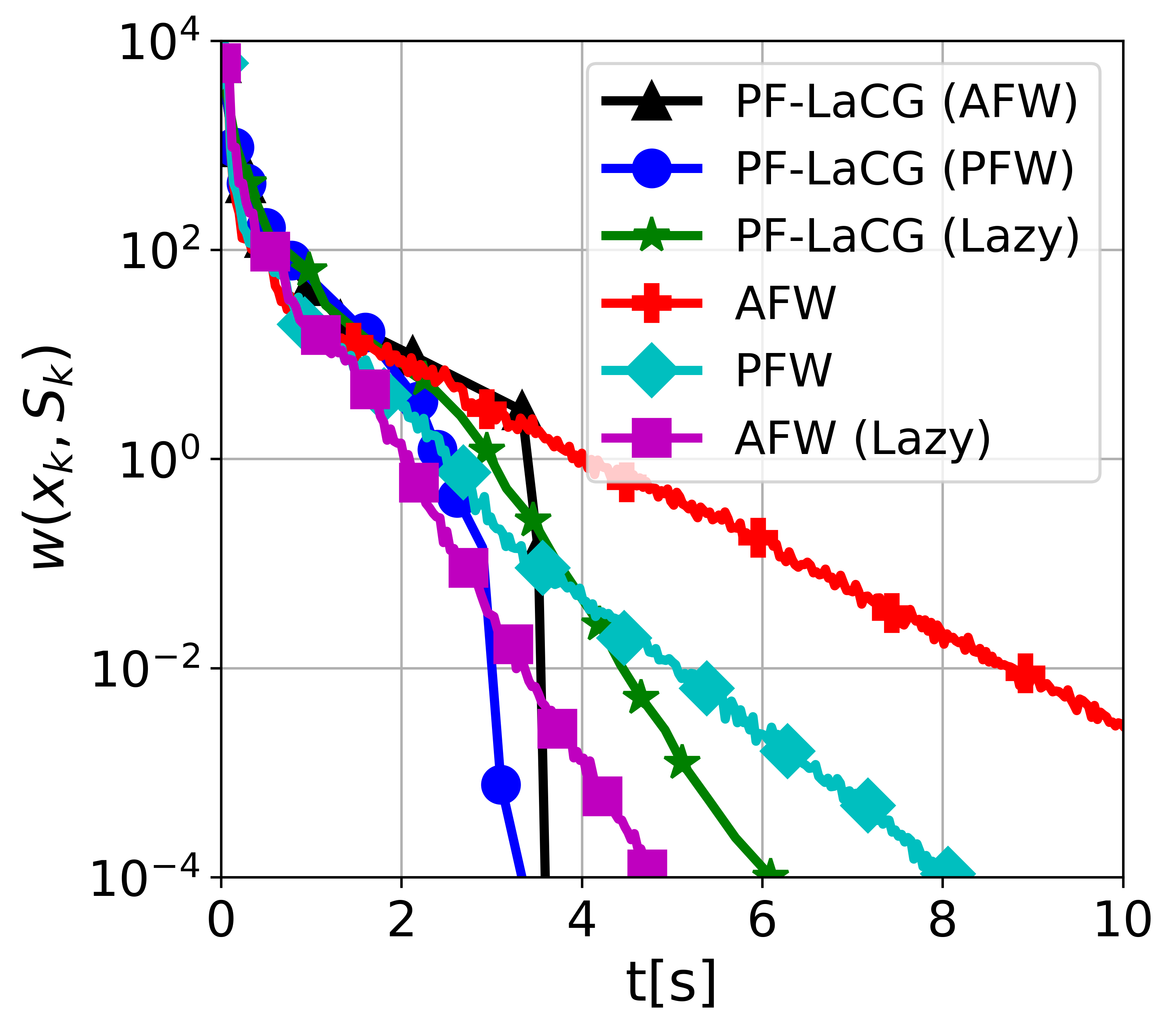} }\label{fig:matchingSWGTime:Appx}}%
        \hspace{\fill}
        \caption{\textbf{Strongly convex and smooth matching problem over a structured Birkhoff polytope: } Algorithm convergence in terms of $f(\vx_k) - f(\vx^*)$ and $w(\vx_k, \cs_k)$ versus iteration count $k$ and versus wall-clock time in seconds.}%
        \label{fig:matching:Appx}%
    \end{figure*}

 We minimize a quadratic cost function over a Birkhoff polytope of dimension $400$, that is with $n = 20$, where the objective function has the form $f(\vx) =  \vx^T \left( M^T M + \alpha \mathbf{1}_n \right)\vx/2 + \vb^T\vx$ with $\alpha = 1$, and where $M\in \rr^{n\times n}$ and $\vb\in \rr^{n}$ have entries sampled uniformly at random between $0$ and $1$. This results in an objective function with a condition number of $L/m = 100000$. In order to make the problem more challenging to solve, we impose an additional set of linear constraints on the learning problem. Otherwise if we were solving the problem over the Birkhoff polytope in $\rr^{n \times n}$ we could efficiently solve linear minimization problems over the aforementioned polytope using the Hungarian algorithm, with complexity $\mathcal{O}(n^3)$. The additional constraints that we impose represent either worker-task assignments that are not permitted, or capacity constraints that represent the maximum fractional matching that we can have between a given task and a worker. In order to generate these extra constraints we sample $80$ integers $i$ from $1\leq i \leq n^2$ without replacement, and we set $x_i = 0$ for the first $40$ integers (to represent that certain matchings are not possible), and $x_i \leq 0.5$ for the remaining $40$ integers to represent a maximum fractional matching. As in the previous example, we did not find the algorithm in \citet{bashiri2017decomposition} to have numerical performance comparable to the other algorithms tested, and so have not included the algorithm in the comparison. The results from the comparison can be seen in Figure~\ref{fig:matching:Appx}.

\subsection{Performance Comparison with 2 Cores} \label{Section:Appx:2coreperformance}

In Figure~\ref{fig:Appx_2core} we show a performance comparison for the different algorithms where we use 2 CPU cores for the CG-variants, and 2 cores for the PFLaCG algorithms (1 core for the AGD algorithm, and 1 core for the AFW algorithm). For the sparse regression problem over the structured LASSO feasible region and the matching problem over the structured Birkhoff polytope we did not observe a large increase in performance for the CG-variants by using 2 cores instead of 1. For example, for the sparse regression problem the PFW algorithm went from reaching a value of $w(\vx, \cs)$ below $10^{-5}$ in 188 seconds with 1 core, to reaching it in 166 seconds with 2 cores, which constitutes a 10$\%$ increase in performance with respect to wall-clock time. Similarly, the matching problem went from reaching a value of $w(\vx, \cs)$ below $10^{-4}$ in 4.75 seconds with 1 core, to reaching it in 4.63 seconds with 2 cores, which is a meager 2$\%$ increase in performance with respect to wall-clock time. On the other hand, for the probability simplex example we can see that the algorithm reaches a value of $w(\vx, \cs)$ below $10^{-5}$ in 190 seconds with 1 core, and it reaches that value in 96 seconds using 2 cores, which constitutes a 50$\%$ increase in performance with respect to wall-clock time.

    \begin{figure*}[th!]
        \centering
        \hspace{\fill}
        \subfloat[Probability Simplex]{{\includegraphics[width=4.85cm]{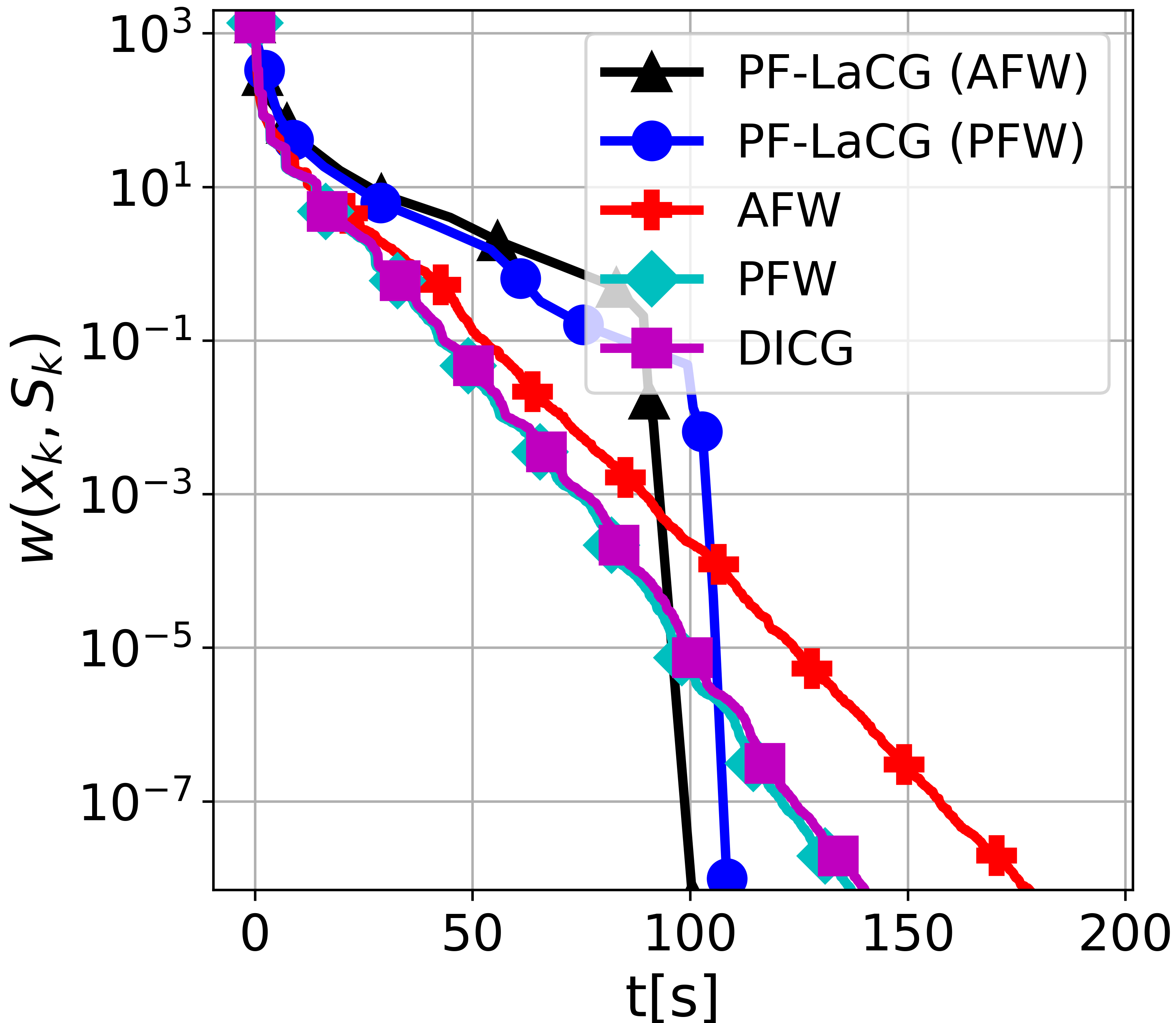} }\label{fig:SimplexSWGIt:Appx_2core}}%
        %\qquad
        \hspace{\fill}
        \subfloat[Structured LASSO]{{\includegraphics[width=4.85cm]{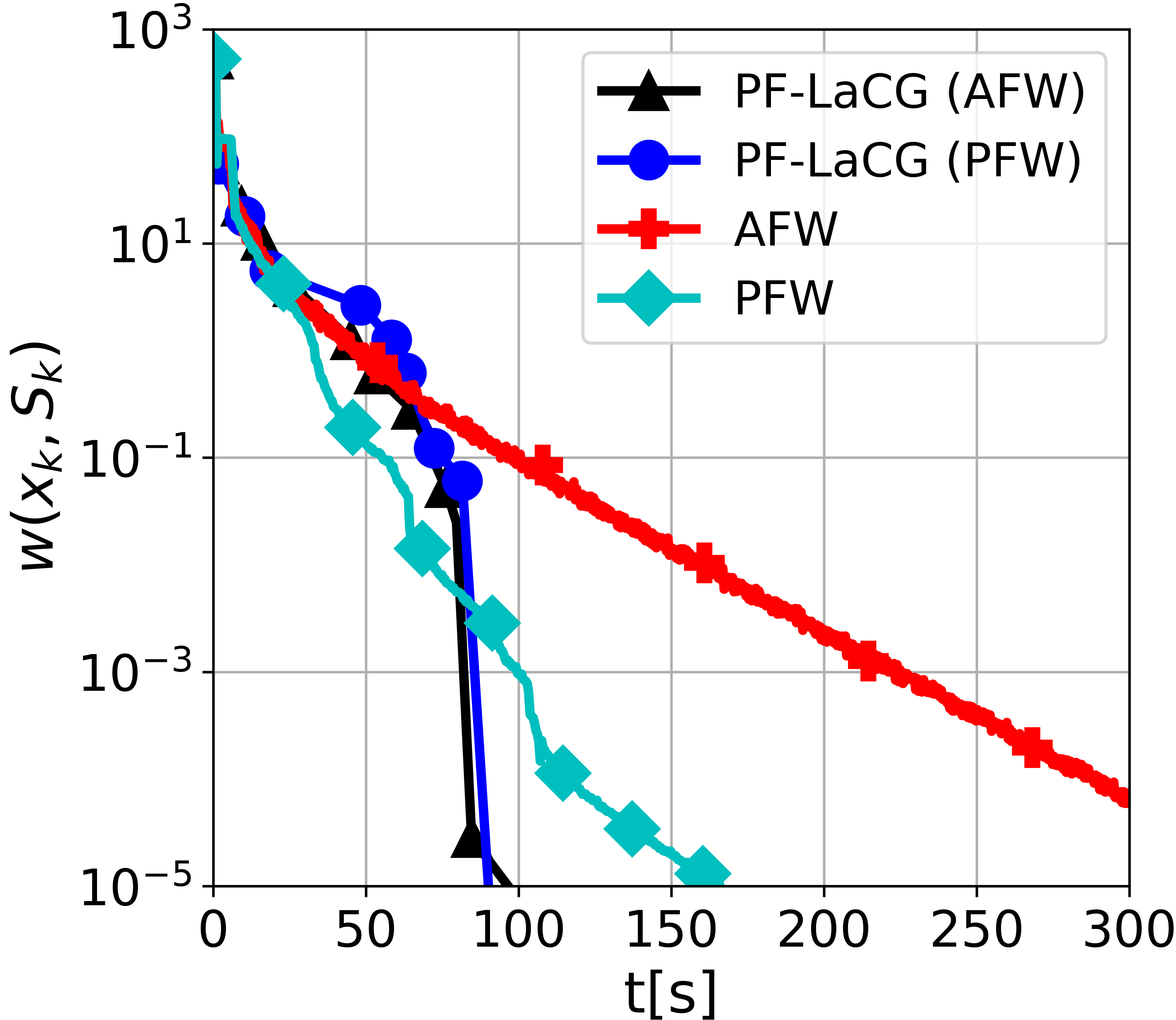} }\label{fig:LASSOSWGTime:Appx_2core}}%
        \hspace{\fill}
        \subfloat[Structured Matching]{{\includegraphics[width=4.85cm]{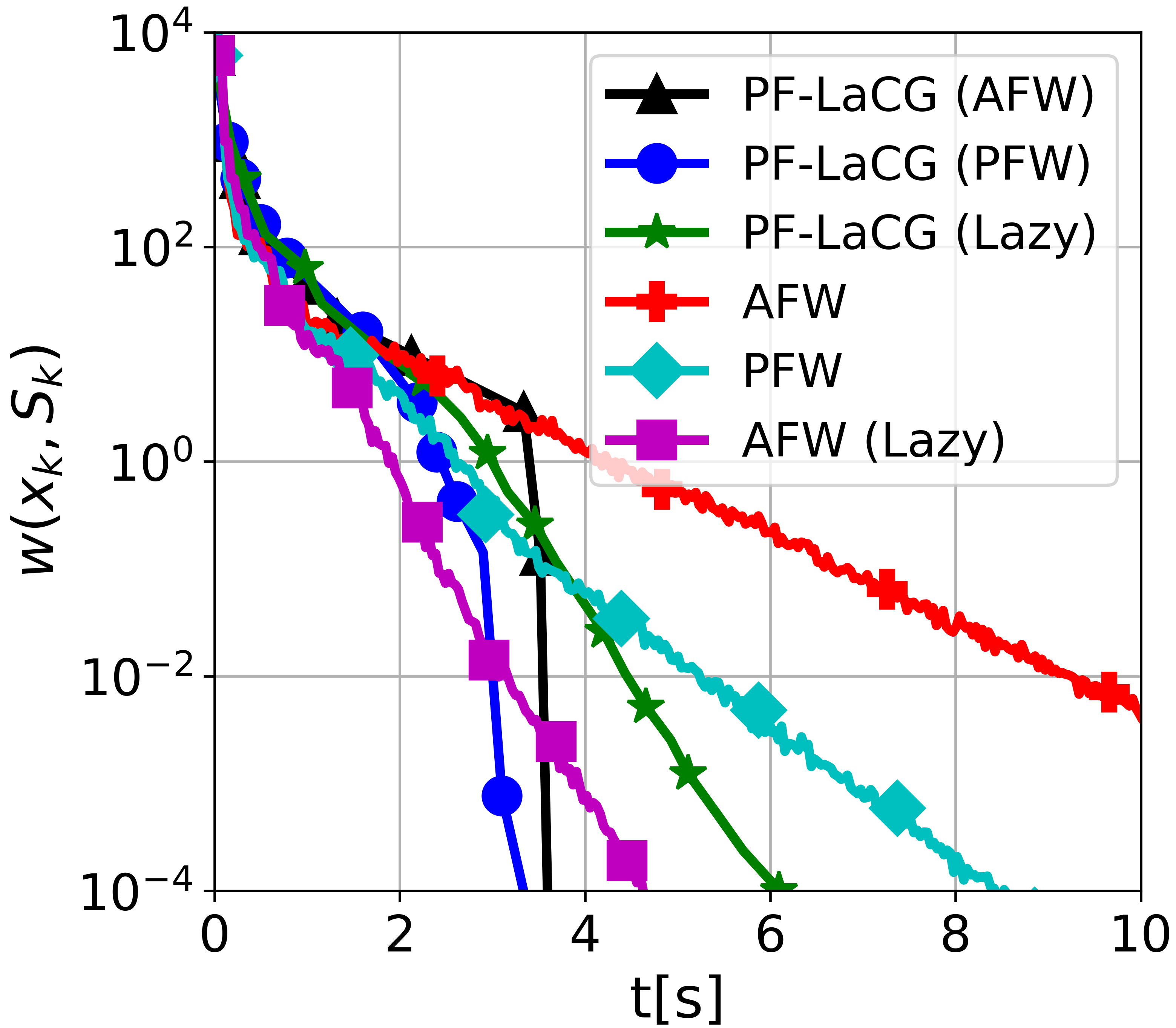} }\label{fig:BirkhoffSWGTime:Appx_2core}}%
        \hspace{\fill}
        \caption{\textbf{Performance comparison using 2-cores for CG-variants: } Comparison of $w(\vx_k, \cs_k)$  vs time (seconds) for the unit probability simplex experiment in Figure~\protect\subref{fig:SimplexSWGIt:Appx_2core}, for the structured LASSO problem in Figure~\protect\subref{fig:LASSOSWGTime:Appx_2core} and for the structured matching problem in Figure~\protect\subref{fig:BirkhoffSWGTime:Appx_2core}.}%
        \label{fig:Appx_2core}%
    \end{figure*}

Regardless of if we use 1 core or 2 cores for the CG variants in the comparison, we still obtain faster convergence in wall-clock time when using the PFLaCG algorithm. Finally, note that the convergence in terms of iteration count is independent on the number of cores used in the experiment.

\end{document}